\documentclass{article}[12pt]
\usepackage{wrapfig,cancel,authblk}
\usepackage{lscape}
\usepackage{rotating}
\usepackage{graphicx}
\usepackage{caption}
\usepackage{amsmath}
\usepackage{lineno,hyperref,amsfonts,enumerate,amsthm,appendix,lipsum}
\usepackage{color,amssymb,mathtools,lineno,url}
\usepackage{framed,setspace,subfigure}
\usepackage[dvipsnames]{xcolor}
\usepackage[margin=1.5in]{geometry}
\usepackage[utf8]{inputenc}
\usepackage[english]{babel}
\usepackage{bm}
\newtheorem{theorem}{\bf Theorem}[section]
\newtheorem{lem}{Lemma}

\newcommand{\drop}[1]{}
\newcommand{\norm}[1]{\left\lVert#1\right\rVert}
\onehalfspacing

\begin{document}
\title{Solution of planar elastic stress problems using stress basis functions}
\author[1*]{Sankalp Tiwari}
\affil[1]{\begin{small} Mechanical and Aerospace Engineering, Indian Institute of Technology Hyderabad \end {small}}

\author[2]{Anindya Chatterjee}
\affil[2]{\begin{small} Mechanical Engineering, Indian Institute of Technology Kanpur \end{small}}
\affil[*]{\begin{small} Corresponding author, Email: snklptwr@gmail.com \end{small}}

\date{}
\maketitle

\begin{abstract}
The use of global displacement basis functions to solve boundary-value problems in linear elasticity is well established. No prior work uses a global {\em stress tensor} basis for such solutions. We present two such methods for solving stress problems in linear elasticity. In both methods, we split the sought stress $\bm{\sigma}$ into two parts, where neither part is required to satisfy strain compatibility. The first part, $\bm{\sigma}_p$, is {\em any} stress in equilibrium with the loading. The second part, $\bm{\sigma}_h$, is a self-equilibrated stress field on the unloaded body. In both methods, $\bm{\sigma}_h$ is expanded using tensor-valued global stress basis functions developed elsewhere. In the first method, the coefficients in the expansion are found by minimizing the strain energy based on the well-known complementary energy principle. For the second method, which is restricted to planar homogeneous isotropic bodies, we show that we merely need to minimize the squared $L^2$ norm of the {\em trace} of stress. For demonstration, we solve eight stress problems involving sharp corners, multiple-connectedness, non-zero net force and/or moment on an internal hole, body force, discontinuous surface traction, material inhomogeneity, and anisotropy. The first method presents a new application of a known principle. The second method presents a hitherto unreported principle, to the best of our knowledge.
\end{abstract}

\section{Introduction}
Variational methods, both displacement- and stress-based, have long been used to solve boundary value problems in solid mechanics \cite{washizu}. In displacement-based methods, direct expansion of the displacement using global {\em vector-valued} functions (e.g., free vibration modes) is common \cite{timoshenko}. However, in stress-based methods that use global expansions, the variational problem is first cast in terms of the Airy stress function, which is then expanded using the first few terms of a scalar polynomial or trigonometric series \cite{timoshenko}. To the best of our knowledge, there is no prior demonstration of using a global {\em tensor-valued} complete stress basis to solve boundary value problems in solid mechanics. 
 
In this paper, we consider linear elastic stress problems, by which we mean traction is prescribed on the whole boundary, and we are interested in finding only stresses. In our solution approach, the sought stress is split into a `particular' part $\bm{\sigma}_p$ and a `homogeneous' part $\bm{\sigma}_h$, neither of which is required to satisfy strain compatibility. $\bm{\sigma}_p$ is {\em any} stress in equilibrium with the loading, while $\bm{\sigma}_h$ is a self-equilibrated traction-free stress. Subsequently, $\bm{\sigma}_h$ is expanded in a series using tensor-valued global basis functions developed by us earlier \cite{tiwari}.

The first contribution of this paper lies in determining the coefficients in the above expansion by minimizing the strain energy of $\bm{\sigma}_p+\bm{\sigma}_h$, following the principle of minimum complementary energy. With increasing numbers of terms in the expansion, the computed approximation converges to the true stress in the strain energy norm. 

The academic contribution of this paper is a new variational principle for the special case of planar homogeneous isotropic bodies. We show that the true stress minimizes the squared $L^2$ norm of the trace of stress over the set of all stresses in equilibrium with the loading. The principle is useful for two reasons: firstly, it yields a computationally cheaper formulation than that obtained through minimization of strain energy; secondly, it shows that in planar homogeneous isotropic bodies, solely the dilatational part of the stress is enough to resolve the issue of strain compatibility. We will discuss these issues in detail in due course.

For a demonstration of the two solution methods mentioned, we solve eight planar stress problems in linear elasticity. These problems incorporate sharp corners, multiple-connectedness, non-zero net force and/or moment on an internal hole, body force, discontinuous surface traction, material inhomogeneity, and anisotropy.

We close this introduction with a brief description of the notation used in the paper. The dot product between two tensors of the same order represents total tensor contraction. Using Einstein's summation convention,
\[
\bm{A}\cdot \bm{B}= 
\begin{cases}
A_i B_i& \mbox{if } \,\, \bm{A} \,\, \mbox{and} \,\, \bm{B} \,\, \mbox{are vectors}, \\
A_{ij} B_{ij}& \mbox{if } \,\, \bm{A} \,\, \mbox{and} \,\, \bm{B} \,\, \mbox{are second-order tensors}, \\
A_{ijk} B_{ijk}& \mbox{if } \,\, \bm{A} \,\, \mbox{and} \,\, \bm{B} \,\, \mbox{are third-order tensors},
\end{cases}
\]
where $A_i$,$A_{ij}$,$A_{ijk}$ etc.\ are the Cartesian components of the tensor $\bm{A}$ (likewise for $\bm{B}$).
For a second-order tensor $\bm{\sigma}$, $\mbox{div} \bm{\sigma}$  represents $\sigma_{ij,j} \bm{e}_i $, where a subscript following a comma denotes a partial derivative and $\bm{e}_i$ are the unit Cartesian basis vectors. For a vector $\bm{v}$, $\bm{\sigma} \bm{v}$ represents $\sigma_{ij}v_j \bm{e}_i$.
The dyadic product $\bm{u} \otimes \bm{v}$ for vectors $\bm{u}$ and $\bm{v}$ is defined by its action on a vector $\bm{w}$ as $(\bm{u} \otimes \bm{v})\bm{w}=(\bm{v}\cdot \bm{w})\bm{u}$.
Accordingly, $\text{curl}\,\bm{\sigma} = \chi_{ijk}\sigma_{lj,k} \bm{e}_i \otimes \bm{e}_l$, where $\chi_{ijk}$ is the permutation tensor.
The sum of planar components of $\bm{\sigma}$ is denoted as $\bar{\sigma}=\sigma_{xx}+\sigma_{yy}=\sigma_{rr}+\sigma_{\theta\theta}$. 

The $L^2$ inner product between two scalar fields $f_1$ and $f_2$ on a domain $\Omega\in \mathbb{R}^2$ with area measure $dA$ is 
$$ (f_1,f_2)=\int_{\Omega} f_1 f_2 \, dA,$$
and the $L^2$ inner product between two second-order tensor fields $\bm{\sigma}_1$ and $\bm{\sigma}_2$ is
$$ \left( \bm{\sigma}_1,\bm{\sigma}_2 \right)=\int_{\Omega} \sigma_{1ij} \sigma_{2ij} \, dA.$$
Unless stated otherwise, we will refer to these $L^2$ inner products simply as inner products. The order of the tensor field in the inner product (scalar, vector or second-order tensor) will be clear from the context.

\section{Two variational principles: one old, one new} \label{variation}
We now present the two variational principles to be used in the next section. The first principle, called the `strain energy principle' henceforth, is based on strain energy minimization and applies to all linear elastic bodies. The second principle, henceforth referred to as the `planar trace principle,' is based on minimizing the squared $L^2$ norm of the trace of stress and applies to only planar homogeneous isotropic bodies\footnote{By planar, we mean that we will only consider plane strain problems. In general, the case of plane stress is the same as plane strain under a redefinition of the elastic parameters; see Timoshenko and Goodier \cite{timoshenko}. So, our results will also hold for plane stress problems.}. The planar trace principle is new to the best of our knowledge.

\subsection{Strain energy principle} \label{kala}
We consider a linear elastic body $\mathcal{B}$ subjected to a sufficiently regular self-equilibrated surface traction $\bm{\tau}$ prescribed on the whole boundary. Let the set of all square-integrable stresses in equilibrium with $\bm{\tau}$ be
\begin{equation}
\label{edefQ}
\mathcal{Q}=\left\{\bm{\sigma} \big| ~ \bm{\sigma}\in \,\text{Sym},~\int_{\Omega} \bm{\sigma}\cdot \bm{\sigma}\,dA<\infty,~\text{div}\,\bm{\sigma}=\bm{0}, ~ \left.\bm{\sigma n}\right|_{\partial \Omega}=\bm{\tau} \right\},
\end{equation}
where $\Omega$ is an open, bounded, sufficiently regular domain in $\mathbb{R}^2$ with boundary $\partial \Omega$, $\bm{n}$ is the unit outward normal to $\partial\Omega$, and ``Sym'' denotes the set of all symmetric second-order tensor fields defined over $\Omega$. 
The strain energy of this body 
\begin{equation*}
\mathcal{E}(\bm{\sigma})=\int_{\Omega} \bm{C}^{-1}\bm{\sigma}\cdot \bm{\sigma} \, dA,
\end{equation*}
where $\bm{C}$ is the fourth-order stiffness tensor possessing major and minor symmetries \cite{steigmann}. We then have the following lemma.

\begin{lem} [Strain energy principle] \label{SEPr}
The true stress minimizes $\mathcal{E}$ over $\mathcal{Q}$.
\end{lem}

\begin{proof}
We will use the calculus of variations \cite{hilbert}. We begin by seeking a {\em stationary point} of $\mathcal{E}$ over $\mathcal{Q}$. We incorporate the pointwise constraint $\text{div}\,\bm{\sigma}=\bm{0}$ through a spatially varying Lagrange multiplier vector $\bm{\mu}$, and extremize the functional
\begin{equation}
\tilde{\mathcal{E}}(\bm{\sigma},\bm{\mu})=\int_{\Omega}  \left(\bm{C}^{-1}\bm{\sigma}\cdot \bm{\sigma} - \bm{\mu}\cdot \text{div}\,\bm{\sigma} \right)\, dA
\label{Etild}
\end{equation}
over the set 
\begin{equation}
\tilde{\mathcal{Q}}=\left\{\bm{\sigma} \big| ~ \bm{\sigma}\in \,\text{Sym},~\int_{\Omega} \bm{\sigma}\cdot \bm{\sigma}\,dA<\infty,~ \left.\bm{\sigma n}\right|_{\partial \Omega}=\bm{\tau} \right\}.
\label{tildQ}
\end{equation}
Let $(\hat{\bm{\sigma}},\hat{\bm{\mu}})$ be a stationary point, and $\bm{\eta}$ be an infinitesimal perturbation in $\hat{\bm{\sigma}}$ such that $\hat{\bm{\sigma}} + \bm{\eta}\in \tilde{\mathcal{Q}}$. Then,
$$ \tilde{\mathcal{E}}(\bm{\sigma}+\bm{\eta},\bm{\mu}) = \tilde{\mathcal{E}}(\bm{\sigma},\bm{\mu})$$
up to first-order terms in $\bm{\eta}$, where we have dropped the hats to avoid clutter. Expanding the above using Eq.~\ref{Etild}, using the major symmetry of $\bm{C}^{-1}$ (which follows from the major symmetry of $\bm{C}$) to write $\bm{C}^{-1}\bm{\eta}\cdot \bm{\sigma}=\bm{C}^{-1}\bm{\sigma}\cdot\bm{\eta}$, and neglecting the second-order terms in $\bm{\eta}$, we obtain
$$ \int_{\Omega} \left(2\bm{C}^{-1}\bm{\sigma} \cdot \bm{\eta} - \bm{\mu}\cdot \text{div}\,\bm{\eta} \right)\, dA=0. $$
Upon using the divergence theorem and noting that $\bm{\eta}\bm{n}=\bm{0}$ on $\partial\Omega$ (since the traction is prescribed on the whole boundary), we obtain
$$\int_{\Omega} \left(2\bm{C}^{-1}\bm{\sigma} + \nabla \bm{\mu}\right)\cdot \bm{\eta}\, dA=0.$$
Since $\bm{\eta}$ is an arbitrary symmetric infinitesimal perturbation,
$$2\bm{C}^{-1}\bm{\sigma} + \nabla \bm{\mu}=\bm{R}$$
for some skew symmetric second-order tensor $\bm{R}$. Adding the above equation to its transpose and simplifying, we obtain
\begin{equation}
\bm{C}^{-1}\bm{\sigma} =- \nabla_s \bm{\mu}/2,
\label{sivaSt}
\end{equation}
where $\nabla_s$ denotes the symmetric part of the gradient. Thus, the strain $\bm{C}^{-1}\bm{\sigma}$ corresponding to the stationary point $\bm{\sigma}$ is compatible, with the corresponding displacement being $-\bm{\mu}/2$. 

The variation of $\bm{\mu}$ yields
$$ \text{div}\, \bm{\sigma}=\bm{0}.$$
Also, since $\bm{\sigma}\in \tilde{\mathcal{Q}}$ (Eq.~\ref{tildQ}), $\bm{\sigma n}=\bm{\tau}$. Thus, $\bm{\sigma}$ is in equilibrium with the traction $\bm{\tau}$.
 
From the uniqueness theorems of linear elasticity, it is known that if a stress is in equilibrium with a given traction and has a compatible strain, it {\em is} the solution to the given stress problem \cite{payne}. So, we conclude that the stationary point of $\mathcal{E}$ over $\mathcal{Q}$ is the true stress. In fact, the positive-definiteness of $\mathcal{E}$ coupled with the uniqueness of the elastic solution implies that $\bm{\sigma}$ minimizes $\mathcal{E}$ over $\mathcal{Q}$ \cite{washizu}.
\end{proof}

Note that the strain energy principle holds for {\em all} linear elastic bodies: homogeneous or inhomogeneous, isotropic or anisotropic, simply- or multiply-connected, planar or spatial.

Also note that this principle is a special case of the `principle of minimum complementary energy,' which states that the true stress minimizes the complementary energy
\begin{equation*}
U_c=\mathcal{E}_c-W_c
\end{equation*}
over $\mathcal{Q}$, where $\mathcal{E}_c$ is the complementary strain energy and $W_c$ is the complementary work. For linear elasticity, $\mathcal{E}_c=\mathcal{E}$, and for stress problems, $W_c=0$ since the portion of the boundary on which displacements are prescribed is a null set (e.g., see Page 31 of \cite{washizu}). Accordingly, $U_c=\mathcal{E}$.

We next consider the special case of planar homogeneous isotropic bodies for which the planar trace principle holds.

\subsection{Planar trace principle} \label{HomIso}
Let us denote the sum of the normal planar components of a stress $\bm{\sigma}$ as $\bar{\sigma}$, i.e., $\bar{\sigma}=\sigma_{xx}+\sigma_{yy}=\sigma_{rr}+\sigma_{\theta\theta}$. Henceforth, we will refer to $\bar{\sigma}$ as the {\em planar trace} of $\bm{\sigma}$. We define the trace energy\footnote{Trace of $\bm{\sigma}$ is $\sigma_{xx}+\sigma_{yy}+\sigma_{zz}$, while $\bar{\sigma}$ is $\sigma_{xx}+\sigma_{yy}$. However, for plane strain problems in homogeneous linear isotropic elasticity considered in this section, trace of $\bm{\sigma}$ is equal to $(1+\nu) \bar{\sigma}$. So, the minimization of the squared $L^2$ norms of these two quantities is equivalent. For this reason, we call $\mathcal{T}$ the trace energy.}
\begin{equation}
\label{L2e}
\mathcal{T}(\bm{\sigma})=\int_{\Omega} \bar{\sigma}^2 \, dA.
\end{equation}
We then have the following theorem.

\begin{theorem}[Planar trace principle] \label{PTP}
For a planar homogeneous isotropic body that obeys linear elasticity and is subjected to a traction with zero net force on each internal hole (if any), the true stress minimizes $\mathcal{T}$ over $\mathcal{Q}$.
\end{theorem}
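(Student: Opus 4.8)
The plan is to mirror the proof of Lemma~\ref{SEPr}: enforce the pointwise constraint $\text{div}\,\bm{\sigma}=\bm{0}$ with a Lagrange multiplier vector $\bm{\mu}$ and seek a stationary point of
\begin{equation}
\tilde{\mathcal{T}}(\bm{\sigma},\bm{\mu})=\int_{\Omega}\left(\bar{\sigma}^2-\bm{\mu}\cdot\text{div}\,\bm{\sigma}\right)dA
\end{equation}
over the set $\tilde{\mathcal{Q}}$ of symmetric square-integrable stresses obeying only $\left.\bm{\sigma n}\right|_{\partial\Omega}=\bm{\tau}$. As before, the variation in $\bm{\mu}$ returns $\text{div}\,\bm{\sigma}=\bm{0}$, so the stationary stress lies in $\mathcal{Q}$; all the content is in the variation in $\bm{\sigma}$.

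For that variation I would write $\bar{\sigma}=\bm{I}\cdot\bm{\sigma}$ with $\bm{I}=\bm{e}_x\otimes\bm{e}_x+\bm{e}_y\otimes\bm{e}_y$ the planar identity, so that the first variation of $\int_{\Omega}\bar{\sigma}^2\,dA$ is $\int_{\Omega}2\bar{\sigma}\,\bm{I}\cdot\bm{\eta}\,dA$. Integrating the multiplier term by parts and using $\bm{\eta n}=\bm{0}$ on $\partial\Omega$, exactly as in the previous proof, stationarity gives $2\bar{\sigma}\bm{I}+\nabla\bm{\mu}=\bm{R}$ for a skew tensor $\bm{R}$; symmetrizing yields $\nabla_s\bm{\mu}=-2\bar{\sigma}\,\bm{I}$. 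The key observation is that a symmetric gradient equal to a pure dilatation forces $\mu_{x,x}=\mu_{y,y}$ and $\mu_{x,y}+\mu_{y,x}=0$, i.e.\ the Cauchy--Riemann equations for $\mu_x+i\mu_y$. Hence $\bm{\mu}$ is harmonic and so is its dilatation $-2\bar{\sigma}$, giving $\nabla^2\bar{\sigma}=0$. Now I would invoke the classical reduction of strain compatibility: for a planar homogeneous isotropic body with no body force, the Beltrami--Michell condition reduces \emph{exactly} to $\nabla^2\bar{\sigma}=0$. Thus the elastic strain of the stationary stress is compatible; being also in equilibrium with $\bm{\tau}$, it is the true stress by the uniqueness theorem used earlier. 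Since $\mathcal{T}$ is a positive-semidefinite quadratic functional and $\mathcal{Q}$ is affine, any stationary point is a global minimizer, which upgrades ``stationary'' to ``minimizes'' and settles the simply-connected case.

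I expect the main obstacle to be multiple connectedness, which is also exactly where the zero-net-force hypothesis must enter. When $\Omega$ has holes, $\nabla^2\bar{\sigma}=0$ gives only \emph{local} compatibility and existence of a single-valued $\bm{\mu}$ is no longer automatic, so the variational argument above is not by itself conclusive. The honest way to incorporate the hypothesis is the equivalent orthogonality formulation: writing any competitor as $\bm{\sigma}^*+\bm{\sigma}_h$ with $\bm{\sigma}_h$ self-equilibrated and traction-free, one has $\mathcal{T}(\bm{\sigma}^*+\bm{\sigma}_h)=\mathcal{T}(\bm{\sigma}^*)+2\int_{\Omega}\bar{\sigma}^*\bar{\sigma}_h\,dA+\mathcal{T}(\bm{\sigma}_h)$, so minimality of $\bm{\sigma}^*$ reduces to showing the cross term vanishes. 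Introducing the Airy stress function $\Phi_h$ of $\bm{\sigma}_h$, so that $\bar{\sigma}_h=\nabla^2\Phi_h$, and applying Green's second identity while using that $\bar{\sigma}^*$ is harmonic, I would reduce the cross term to contour integrals
\begin{equation}
\int_{\Omega}\bar{\sigma}^*\,\bar{\sigma}_h\,dA=\oint_{\partial\Omega}\left(\bar{\sigma}^*\,\partial_n\Phi_h-\Phi_h\,\partial_n\bar{\sigma}^*\right)ds.
\end{equation}

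The remaining step, and the delicate one, is the boundary bookkeeping. Since $\bm{\sigma}_h$ is traction-free, $\nabla\Phi_h$ is constant on each boundary component and single-valued around each hole, and the global linear freedom in $\Phi_h$ lets me kill $\Phi_h$ and $\nabla\Phi_h$ on the outer boundary, wiping out that contribution. On a hole $C_k$ the flux $\oint_{C_k}\partial_n\bar{\sigma}^*\,ds$ vanishes because $\bar{\sigma}^*$ carries no logarithmic part, while a short computation (via $\Phi_h$, or equivalently the Kolosov--Muskhelishvili potentials) shows that the surviving piece equals $\nabla\Phi_h|_{C_k}$ dotted with a constant multiple of the net force transmitted across $C_k$, i.e.\ the net force of $\bm{\tau}$ on that hole. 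The hypothesis that this net force vanishes on every internal hole therefore kills each hole contribution, the cross term is zero, and $\mathcal{T}(\bm{\sigma}^*+\bm{\sigma}_h)\ge\mathcal{T}(\bm{\sigma}^*)$ follows. Verifying that the hole integrals collapse to precisely the net-force expression, rather than the interior variational calculation, is where I expect the real work to lie.
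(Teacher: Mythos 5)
Your proposal is correct, and for the simply-connected case it is essentially the paper's own argument (Lagrange multiplier, the Euler--Lagrange relation $\nabla_s\bm{\mu}=-2\bar{\sigma}\bm{I}$ of Eq.~\ref{siva}--\ref{mali}, harmonicity $\Delta\bar{\sigma}=0$, Beltrami--Michell, uniqueness, convexity). Where you genuinely diverge is the multiply-connected case, which is the crux where the zero-net-force hypothesis enters. The paper stays inside the variational framework: it keeps the globally single-valued multiplier field $\bm{\mu}$ that comes with the stationary point, substitutes the relations $\bar{\sigma}=-\mu_{1,1}/2=-\mu_{2,2}/2$, $\mu_{1,2}+\mu_{2,1}=0$ into C\'esaro's integral conditions (Eq.~\ref{loopapp}), and shows by real-variable manipulation that the loop integrands become exact differentials plus traction terms, so that $F_i=-\frac{1+\nu}{Y}\oint_{S_r}\tau_{r(3-i)}\,ds$, which vanishes exactly when the net force on each hole is zero. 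You instead abandon stationarity altogether and prove global minimality directly: expand $\mathcal{T}$ about the true stress, reduce everything to the vanishing of the cross term $\int_\Omega\bar{\sigma}^*\bar{\sigma}_h\,dA$, and evaluate that term via Green's second identity, the single-valued Airy function of the traction-free perturbation, and a Muskhelishvili-type period computation showing the hole contribution is $\nabla\Phi_h|_{C_k}$ dotted with a constant multiple of the net force (your two side claims here are indeed correct: $\bar{\sigma}^*=4\,\mathrm{Re}\,\varphi'$ has no logarithmic part for a solution with single-valued displacements, and only the $(z-z_k)^{-1}$ coefficient of $\varphi'$, which is proportional to the net force, survives in the period). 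Each route buys something: the paper's is elementary and avoids complex potentials and Airy functions entirely, but it rests on the formally assumed existence of a single-valued multiplier at the minimizer --- precisely the point you flag as non-automatic; yours proves the stated implication (true stress $\Rightarrow$ minimizer) without any multiplier in the multiply-connected case, at the cost of invoking Gurtin's existence theorem for $\Phi_h$ and Kolosov--Muskhelishvili machinery. Your cross-term bookkeeping is, incidentally, very much in the spirit of the paper's own Appendix \ref{ip} (Theorem \ref{ipTraces}), which uses the same Airy-function-plus-boundary-terms technique for residual stresses.
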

\begin{proof}
Again, we begin by seeking the {\em stationary points} of $\mathcal{T}$ over $\mathcal{Q}$. To that end, we incorporate the pointwise constraint $\text{div}\,\bm{\sigma}=\bm{0}$ through a Lagrange multiplier vector $\bm{\mu}$, and extremize
\begin{equation}
\tilde{\mathcal{T}}(\bm{\sigma},\bm{\mu})=\int_{\Omega} \left(\bar{\sigma}^2- \bm{\mu}\cdot \text{div}\,\bm{\sigma} \right)\, dA
\label{tildT}
\end{equation}
over $\tilde{\mathcal{Q}}$ (Eq.~\ref{tildQ}).
Let $(\hat{\bm{\sigma}},\hat{\bm{\mu}})$ be a stationary point, and $\bm{\eta}$ denote an infinitesimal perturbation in $\hat{\bm{\sigma}}$ such that $\hat{\bm{\sigma}} + \bm{\eta}\in \tilde{\mathcal{Q}}$. Then,
$$ \tilde{\mathcal{T}}(\bm{\sigma}+\bm{\eta},\bm{\mu}) = \tilde{\mathcal{T}}(\bm{\sigma},\bm{\mu})$$
up to first-order terms in $\bm{\eta}$, where we have dropped the hats. Expanding the above using Eq.~\ref{tildT}, and neglecting the second-order terms in $\bm{\eta}$, we obtain
$$ \int_{\Omega} \left(2\bar{\sigma} \bar{\eta} - \bm{\mu}\cdot \text{div}\,\bm{\eta} \right)\, dA=0. $$
Proceeding as before, we obtain the Euler-Lagrange equation
\begin{equation}
\bar{\sigma}\bm{I} =- \nabla_s \bm{\mu}/2.
\label{siva}
\end{equation}
Matching components in the above, we obtain
\begin{equation}
\bar{\sigma} = \sigma_{xx}+\sigma_{yy}=-\frac{\mu_{x,x}}{2}=-\frac{\mu_{y,y}}{2},~~~ \text{and}~~~ \mu_{x,y}+\mu_{y,x}=0.
\label{mali}
\end{equation}
Eliminating $\bm{\mu}$, we obtain
\begin{equation}
\Delta \bar{\sigma}=0.
\label{traceComp2}
\end{equation}
We multiply the above equation by $1-\nu$, and rewrite it as
\begin{equation*}
(1-\nu)\left(\frac{\partial^2 \sigma_{xx}}{\partial y^2}+\frac{\partial^2 \sigma_{yy}}{\partial x^2}\right)-\nu\left(\frac{\partial^2 \sigma_{xx}}{\partial x^2}+\frac{\partial^2 \sigma_{yy}}{\partial y^2}\right)+\left(\frac{\partial^2 \sigma_{xx}}{\partial x^2}+\frac{\partial^2 \sigma_{yy}}{\partial y^2}\right)=0,
\end{equation*}
where $\nu$ is Poisson's ratio. Using the equilibrium equations, we write the last term in brackets on the left-hand side as
\begin{equation}
\frac{\partial^2\sigma_{xx}}{\partial x^2}+\frac{\partial^2\sigma_{yy}}{\partial y^2}=\frac{1}{2}\left(\frac{\partial^2\sigma_{xx}}{\partial x^2}+\frac{\partial^2\sigma_{yy}}{\partial y^2} - 2 \frac{\partial^2\sigma_{xy}}{\partial x \partial y}\right).
\label{transf}
\end{equation}
Finally, using the plane-strain constitutive relations
\begin{equation}
\sigma_{xx}=\frac{Y\left\{ \epsilon_{xx}(1-\nu)+\nu \epsilon_{yy}\right\}}{(1+\nu)(1-2\nu)},~~~\sigma_{yy}=\frac{Y\left\{\epsilon_{yy}(1-\nu)+\nu \epsilon_{xx}\right\}}{(1+\nu)(1-2\nu)}, ~~~ \sigma_{xy}=\frac{Y\epsilon_{xy}}{1+\nu},
\label{ss}
\end{equation}
where $Y$ is Young's modulus, we obtain
\begin{equation}
\frac{\partial^2 \epsilon_{xx}}{\partial y^2} + \frac{\partial^2 \epsilon_{yy}}{\partial x^2} - 2\frac{\partial^2 \epsilon_{xy}}{\partial x \partial y}= 0.
\label{weak2}
\end{equation}
Note that Eq.~\ref{weak2} is the strain compatibility equation for planar simply-connected bodies \cite{barber}. 

The variation of the Lagrange multiplier $\bm{\mu}$ gives
$$ \text{div}\, \bm{\sigma}=\bm{0}.$$
Also, since $\bm{\sigma}\in \tilde{\mathcal{Q}}$ (Eq.~\ref{tildQ}), $\bm{\sigma n}=\bm{\tau}$. Thus, $\bm{\sigma}$ is in equilibrium with the traction $\bm{\tau}$.

We have thus shown that for a simply-connected body, the stationary point $\bm{\sigma}$ is in equilibrium with $\bm{\tau}$ and satisfies strain compatibility. Therefore, it must be the sought stress solution. We conclude that the extremization of $\mathcal{E}$ and $\mathcal{T}$ are equivalent for simply-connected bodies. Again, the positive definiteness of $\mathcal{T}$ implies
that $\bm{\sigma}$ {\em minimizes} $\mathcal{T}$ over $\mathcal{Q}$.

We now consider multiply-connected bodies, for which Eq.~\ref{traceComp2} is a necessary but not sufficient condition for strain compatibility. If there are $n$ holes in the body, then the following additional conditions must be enforced for the compatibility of strain \cite{boley}:
\begin{equation}
\oint_{S_r} \left\{\bm{C}^{-1}\bm{\sigma} -  \left(\bm{X}- \hat{\bm{x}}\right) \times \text{curl}\, \bm{C}^{-1}\bm{\sigma}\right\} \,  d\hat{\bm{x}}=\bm{0},
\label{loopapp}
\end{equation}
where $S_r$ is {\em any} closed loop surrounding the $r^{\text{th}}$ hole, and $\bm{X}$ is an arbitrary fixed point. We need to enforce the above conditions, called C\'esaro's integral conditions, by considering one loop for each hole.

Thus, to prove Theorem \ref{PTP} for multiply-connected bodies, we must show that the minimizer of $\mathcal{T}$ over $\mathcal{Q}$ satisfies C\'esaro's integral conditions provided the net force on each hole is individually zero. 
\begin{figure}[t!]
	\centering
	\includegraphics [width=0.8\linewidth]{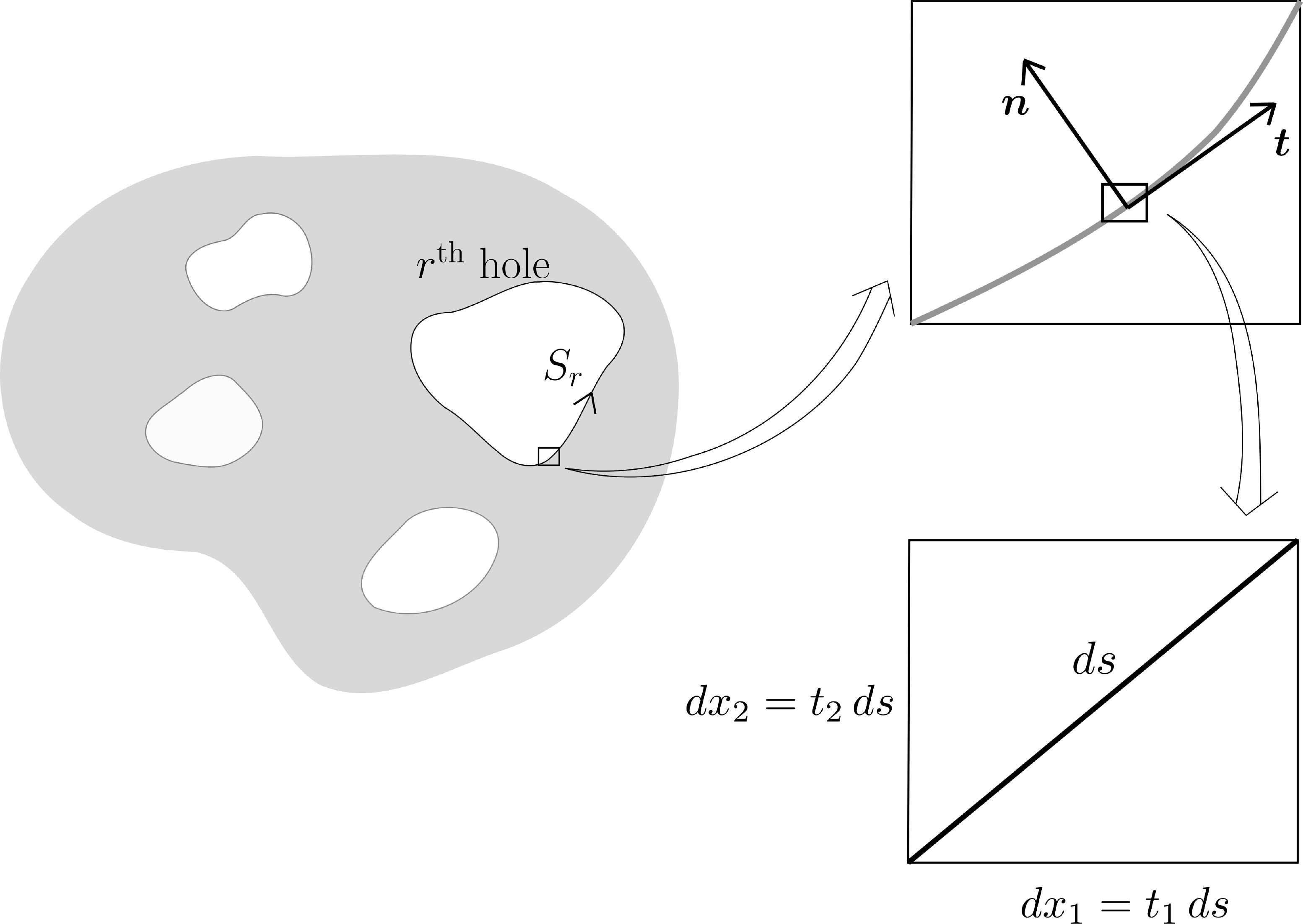}
\caption{A multiply-connected body.}
\label{multi}
\end{figure}

For convenience, we will use indicial notation. The integral around the $r^{\text{th}}$ hole (Figure \ref{multi}) in Eq.~\ref{loopapp} when expressed in indicial notation is
\begin{equation}
F_i=\oint_{S_r} U_{ij} \,  dx_j,
\label{Ii}
\end{equation}
where
\begin{equation}
U_{ij}=\epsilon_{ij} + \left(X_{l}- x_{l}\right) I_{ilj}; ~~~ I_{ilj}=\epsilon_{ij,l}-\epsilon_{lj,i}.
\label{Iilj}
\end{equation}
We must show that $F_i$ evaluated for the trace energy minimizer is zero. 

For plane strain, $\epsilon_{i3}=\epsilon_{3i}=\epsilon_{ij,3}=0$. Further, $\epsilon_{ij}=\epsilon_{ji}$. Using these, we can easily obtain that
\begin{equation*}
\begin{aligned}
&I_{111}=0,~~~I_{112}=0,~~~I_{121}=\epsilon_{11,2}-\epsilon_{12,1}, ~~~I_{122}=\epsilon_{12,2}-\epsilon_{22,1},\\
&I_{211}=\epsilon_{12,1}-\epsilon_{11,2},~~~I_{212}=\epsilon_{22,1}-\epsilon_{12,2},~~~I_{221}=0, ~~~I_{222}=0.
\end{aligned}
\end{equation*}
Substituting the above expressions in the first of Eq.~\ref{Iilj}, we obtain $U_{ij}$. We can simplify $U_{ij}$ using the equilibrium equations
\begin{equation*}
\sigma_{11,1}+\sigma_{12,2}=0,~~~\sigma_{12,1}+\sigma_{22,2}=0,
\end{equation*}
and the stress-strain relations
\begin{equation}
\epsilon_{11}=\left(1-\nu^2\right)\frac{\sigma_{11}}{Y}-\nu(1+\nu)\frac{\sigma_{22}}{Y},~~~\epsilon_{22}=\left(1-\nu^2\right)\frac{\sigma_{22}}{Y}-\nu(1+\nu)\frac{\sigma_{11}}{Y}, ~~~ \epsilon_{12}=(1+\nu)\frac{\sigma_{12}}{Y}.
\label{ss}
\end{equation}
For instance, 
\begin{equation*}
\begin{aligned}
U_{11}&=\epsilon_{11}+(X_1-x_1)I_{111}+(X_2-x_2)I_{121}\\
&=\epsilon_{11}+(X_2-x_2)(\epsilon_{11,2}-\epsilon_{12,1})\\
&=\epsilon_{11}+(X_2-x_2)\frac{1+\nu}{Y}\left[ \left\{\sigma_{11}(1-\nu) - \nu \sigma_{22}\right\}_{,2}-\sigma_{12,1}\right]\\
&=\epsilon_{11}+(X_2-x_2)\frac{(1+\nu)(1-\nu)}{Y} \left( \sigma_{11,2}+\sigma_{22,2}\right)\\
&=\epsilon_{11}+(X_2-x_2)\frac{1-\nu}{1-2\nu}\bar{\epsilon}_{,2}\\
&=\epsilon_{11}+(X_2-x_2)c\, \bar{\epsilon}_{,2},
\end{aligned}
\end{equation*}
where $\bar{\epsilon}=\epsilon_{11}+\epsilon_{22}$ and 
$$ c=\frac{1-\nu}{1-2\nu}.$$
It is similarly obtained that
\begin{equation*}
U_{12}=\epsilon_{12}-(X_2-x_2)c\, \bar{\epsilon}_{,1},~~~U_{21}=\epsilon_{12}-(X_1-x_1)c\, \bar{\epsilon}_{,2},~~~U_{22}=\epsilon_{22}+(X_1-x_1)c\, \bar{\epsilon}_{,1}.
\end{equation*}
Since we are in 2D, we need not consider the out-of-plane $U_{ij}$ components (since $dx_{3}=0$).

From Eq.~\ref{Ii}, the first C\'esaro integral around the $r^{\text{th}}$ hole is
\begin{equation*}
F_1=\oint_{S_r} \left( U_{11} \,  dx_1 + U_{12} \,  dx_2\right) .
\end{equation*}
Substituting the expressions for $U_{ij}$ from the preceding equation,
\begin{equation}
\begin{aligned}
F_1&=\oint_{S_r} \left[ \left\{\epsilon_{11}+(X_2-x_2)c\, \bar{\epsilon}_{,2} \right\}\,  dx_1 + \left\{\epsilon_{12}-(X_2-x_2)c\, \bar{\epsilon}_{,1}\right\} \,  dx_2\right]\\
 &=\oint_{S_r} \left(\epsilon_{11}\,  dx_1 + \epsilon_{12}\,  dx_2\right) + c \oint_{S_r} \left\{(X_2-x_2)\,\left( \bar{\epsilon}_{,2} \,  dx_1 - \bar{\epsilon}_{,1}\,  dx_2\right) \right\}.
\end{aligned}
\label{kamini}
\end{equation}
Consider the first of the above integrals
\begin{equation*}
F_{1a}=\oint_{S_r} \left(\epsilon_{11}\,  dx_1 + \epsilon_{12}\,  dx_2\right). 
\end{equation*}
Using the stress-strain relations (Eqs.~\ref{ss}) in the above, we obtain
\begin{equation*}
\begin{aligned}
F_{1a}&=\oint_{S_r} \left\{\left(1-\nu^2\right)\frac{\sigma_{11}}{Y}\,  dx_1 -\nu(1+\nu)\frac{\sigma_{22}}{Y}\,dx_1+ (1+\nu)\frac{\sigma_{12}}{Y}\,  dx_2\right\}\\
&=\frac{1+\nu}{Y}\oint_{S_r} \left\{\left(1-\nu\right)\sigma_{11}\,  dx_1 -\nu \sigma_{22}\, dx_1+ \sigma_{12}\,  dx_2\right\}.
\end{aligned} 
\end{equation*}
Since $S_r$ can be {\em any} loop surrounding the $r^{\text{th}}$ hole, we consider it to be the surface of the hole itself, as shown in the left panel of Figure \ref{multi}. If the length of the infinitesimal arc element along the hole surface is $ds$, and the unit tangent vector to the surface is $\bm{t}$, then $dx_1 = t_1\,ds$ and $dx_2=t_2\, ds$ (bottom-right panel in Figure \ref{multi}). Accordingly,
\begin{equation}
F_{1a}=\frac{1+\nu}{Y}\oint_{S_r} \left\{\left(1-\nu\right)\sigma_{11}\,  t_1- \nu \sigma_{22}\, t_1+ \sigma_{12}\,  t_2\right\}\,ds. 
\label{tanha}
\end{equation}
If the traction on the hole is $\bm{\tau}_r$,  then
\begin{equation*}
\sigma_{11}n_1+\sigma_{12}n_2 = \tau_{r1},~~~\sigma_{12}n_1+\sigma_{22}n_2 = \tau_{r2}.
\end{equation*}
Since $\bm{n}$ and $\bm{t}$ are orthonormal, $n_1=-t_2$ and $n_2=t_1$. Substituting these in the above equation, we obtain
\begin{equation*}
-\sigma_{11}t_2+\sigma_{12}t_1 = \tau_{r1},~~~-\sigma_{12}t_2+\sigma_{22}t_1 = \tau_{r2}.
\end{equation*}
Using the second of above, we eliminate $\sigma_{12}$ from Eq.~\ref{tanha} to obtain
\begin{equation}
\begin{aligned}
F_{1a}&=\frac{1+\nu}{Y}\oint_{S_r} \left\{\left(1-\nu\right)\sigma_{11}\,  t_1- \nu \sigma_{22}\, t_1+ \sigma_{22}\,t_1-\tau_{r2}\right\}\,ds\\
& = \frac{(1+\nu)(1-\nu)}{Y}\oint_{S_r} \bar{\sigma}\, t_1\, ds - \frac{1+\nu}{Y}\oint_{S_r} \tau_{r2}\, ds\\
& = c\oint_{S_r} \bar{\epsilon}\, t_1\, ds - \frac{1+\nu}{Y}\oint_{S_r} \tau_{r2}\, ds\\
&=c\oint_{S_r} \bar{\epsilon}\, dx_1 - \frac{1+\nu}{Y}\oint_{S_r} \tau_{r2}\, ds. 
\end{aligned}
\label{f1a}
\end{equation}
Now consider the second integral in Eq.~\ref{kamini},
\begin{equation}
F_{1b}=c\oint_{S_r} (X_2-x_2)\,\left( \bar{\epsilon}_{,2} \,  dx_1 - \bar{\epsilon}_{,1}\,  dx_2\right) .
\label{rah}
\end{equation}
Recall that minimization of trace energy yielded a Lagrange multiplier $\bm{\mu}$ satisfying (see Eq.~\ref{mali})
\begin{equation*}
\bar{\sigma}=\frac{Y\bar{\epsilon}}{(1-2\nu)(1+\nu)} = -\frac{\mu_{1,1}}{2}=-\frac{\mu_{2,2}}{2},~~~ \text{and}~~~ \mu_{1,2}+\mu_{2,1}=0.
\end{equation*}
Consider $\bar{\epsilon}_{,2}$. Using the above relations, we have
\begin{equation*}
\frac{-2Y\bar{\epsilon}_{,2}}{(1-2\nu)(1+\nu)}=\frac{(\mu_{1,1}+\mu_{2,2})_{,2}}{2}=\frac{\mu_{1,12}+\mu_{2,22}}{2}=\frac{\mu_{1,12}+\mu_{1,12}}{2}=(\mu_{1,2})_{,1}.
\end{equation*}
Note that the existence of $\bm{\mu}$ from Eq.~\ref{siva} guarantees that the order of the derivatives can be interchanged as above. Similarly,
\begin{equation*}
\begin{split}
\frac{-2Y\bar{\epsilon}_{,1}}{(1-2\nu)(1+\nu)}=\frac{(\mu_{1,1}+\mu_{2,2})_{,1}}{2}=\frac{\mu_{1,11}+\mu_{2,21}}{2}=\frac{\mu_{2,21}+\mu_{2,21}}{2}\\
=(\mu_{2,1})_{,2}=-(\mu_{1,2})_{,2}.
\end{split}
\end{equation*}
Letting $\displaystyle \frac{-2Y}{(1-2\nu)(1+\nu)}=\frac{1}{k}$, we have
\begin{equation*}
\bar{\epsilon}_{,2} \,  dx_1 - \bar{\epsilon}_{,1}\,  dx_2 = k\left\{ (\mu_{1,2})_{,1} \,  dx_1 + \mu_{1,2})_{,2} \,  dx_2 \right\}=k d\left( \mu_{1,2}\right).
\end{equation*}
Using the above, we write the integrand in Eq.~\ref{rah} using the chain rule of differentiation as
\begin{equation*}
(X_2-x_2)\,\left( \bar{\epsilon}_{,2} \,  dx_1 - \bar{\epsilon}_{,1}\,  dx_2\right) = k (X_2-x_2) d\left( \mu_{1,2}\right) = k d\left\{(X_2-x_2) \left( \mu_{1,2}\right)\right\}+k\mu_{1,2}d{x_2},
\end{equation*}
where we have used the fact that $\bm{X}$ is a fixed point. Substituting this in Eq.~\ref{rah}, we obtain
\begin{equation}
F_{1b}=ck\oint_{S_r} d\left\{(X_2-x_2) \left( \mu_{1,2}\right)\right\} + ck  \oint_{S_r} \mu_{1,2}d{x_2}=ck  \oint_{S_r} \mu_{1,2}d{x_2}.
\label{F2fin}
\end{equation}
Finally, using Eqs.~\ref{f1a} and \ref{F2fin}, we have
\begin{equation*}
\begin{aligned}
F_{1}&=F_{1a}+F_{1b}= c\oint_{S_r} \bar{\epsilon}\, dx_1 - \frac{1+\nu}{Y}\oint_{S_r} \tau_{r2}\, ds + ck  \oint_{S_r} \mu_{1,2}d{x_2} \\
&= ck\oint_{S_r} \left(\frac{\mu_{1,1}+\mu_{2,2}}{2}\right)\, dx_1 + ck  \oint_{S_r} \mu_{1,2}d{x_2}- \frac{1+\nu}{Y}\oint_{S_r} \tau_{r2}\, ds\\
&=ck\oint_{S_r} \mu_{1,1}\, dx_1 + ck  \oint_{S_r} \mu_{1,2}d{x_2}- \frac{1+\nu}{Y}\oint_{S_r} \tau_{r2}\, ds\\
& = ck\oint_{S_r} d\left(\mu_{1}\right)- \frac{1+\nu}{Y}\oint_{S_r} \tau_{r2}\, ds\\
&=- \frac{1+\nu}{Y}\oint_{S_r} \tau_{r2}\, ds. 
\end{aligned}
\end{equation*}
We can similarly show that 
$$F_2=ck\oint_{S_r} d\left(\mu_{2}\right)- \frac{1+\nu}{Y}\oint_{S_r} \tau_{r1}\, ds=- \frac{1+\nu}{Y}\oint_{S_r} \tau_{r1}\, ds.$$
If we assume that the net force corresponding to $\bm{\tau}_r$ is zero, i.e.,
\begin{equation*}
\oint_{S_r} \bm{\tau}_{r}\, ds = \bm{0},
\end{equation*}
we have $F_1=F_2=0.$

Thus, we have shown that the minimizer of the trace energy satisfies the global compatibility of strain if the net force on each hole is zero. Since it also satisfies the local strain compatibility, and domain and boundary equilibria, it must be the true stress. This completes the proof of Theorem \ref{PTP}.
\end{proof}

The planar trace principle is new to the best of our knowledge. It is to be noted that even for homogeneous isotropic elastic bodies, there are two independent elastic constants, say Young's modulus and Poisson's ratio. The remarkable thing about this new variational principle is that it is not merely independent of Young's modulus, which can be scaled out, but it is also independent of Poisson's ratio.

We now summarize the two principles derived in this section. \\
\noindent (a) \underline{Strain energy principle}: In any linear elastic stress problem, the minimizer of the strain energy over the set of all stresses in equilibrium with the loading is the solution.\\
\noindent (b) \underline{Planar trace principle}: In the special case of planar homogeneous isotropic bodies, the minimizer of the trace energy of Eq.\ (\ref{L2e}) over the same set of stresses is the solution for simply-connected bodies; the same is true for a multiply-
 body too provided the net force on each hole is individually zero.

In the next section, we use these variational principles to develop our method of solving stress problems in linear elasticity.

\section{Solution methods} \label{formulation}
We now present our approach to solving the following problem. A planar linear elastic body $\mathcal{B}$ is subjected to a self-equilibrated traction $\bm{\tau}$ prescribed on the whole boundary. We wish to find a sequence $(\bm{\sigma}^N)$ of approximate stresses converging to the true stress $\bm{\sigma}$ in the strain energy norm \cite{steigmann}
\begin{equation}
\norm{\bm{\sigma}}_{\mathcal{E}}=\left(\int_{\Omega} \bm{C}^{-1}\bm{\sigma} \cdot \bm{\sigma}\, dA\right)^{0.5}.
\label{senorm}
\end{equation}

We begin by finding any one stress $\bm{\sigma}_p$ in equilibrium with $\bm{\tau}$, without regard to strain compatibility, i.e., $\bm{\sigma}_p$ satisfies
\begin{equation}
\begin{aligned}
\text{div}\, \bm{\sigma}_p &= \bm{0} ~~~ \text{in} ~~~ \Omega, \\
\bm{\sigma}_p \bm{n} &= \bm{\tau} ~~~ \text{on} ~~~ \partial\Omega.
\end{aligned}
\label{BVPsigmap}
\end{equation}
Note that there are infinitely many $\bm{\sigma}_p$ satisfying Eqs.~\ref{BVPsigmap}. For simple geometries and surface traction distributions, it may be easy to obtain a candidate $\bm{\sigma}_p$. As an example of finding such a $\bm{\sigma}_p$ which does not satisfy strain compatibility, consider the square block shown in the left panel of Figure \ref{Demo}, occupying $0\leq x\leq 1$, $0\leq y \leq 1$. It is subjected to constant pressure $p$ on the middle halves of the top and bottom edges. An obvious candidate $\bm{\sigma}_p$ is
\begin{equation}
\displaystyle
\sigma_{xxp}=0, ~~~ \sigma_{xyp}=0, ~~~\\
\sigma_{yyp}=
\begin{cases}
-p, ~~~~ 1/4\leq x \leq 3/4,\\
0, ~~~~~~\, \text{otherwise.}
\end{cases}
\label{casesintro}
\end{equation}
A color plot of $\sigma_{yyp}$ is shown in the right panel of Figure \ref{Demo}. 
\begin{figure}[t!]
    \begin{subfigure}
	\centering
	\hbox{\hspace{1cm}}\includegraphics [width=0.35\linewidth]{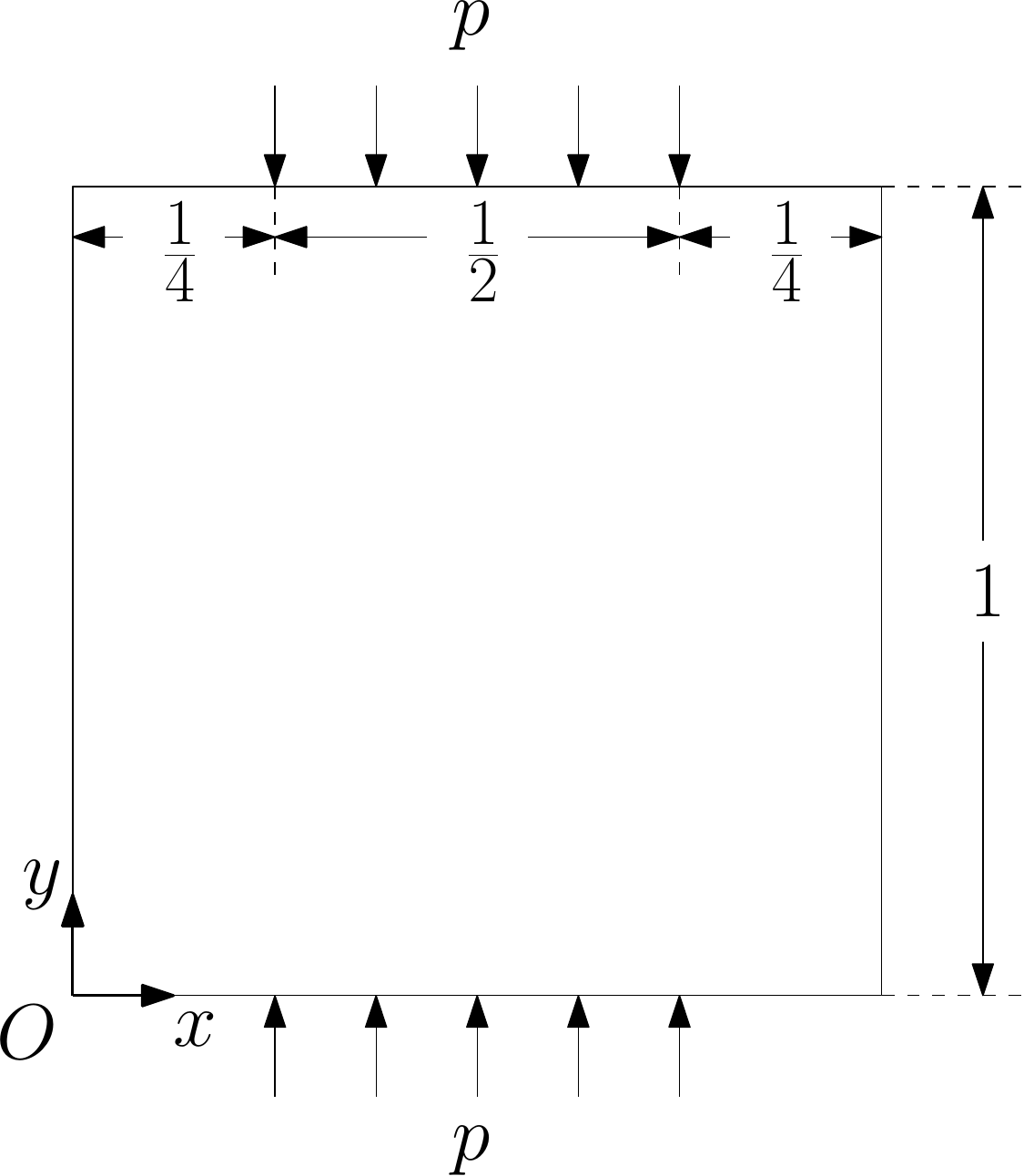}
    \end{subfigure}
    \begin{subfigure}
	\centering
	\hbox{\hspace{1.5cm}}\includegraphics [width=0.5\linewidth]{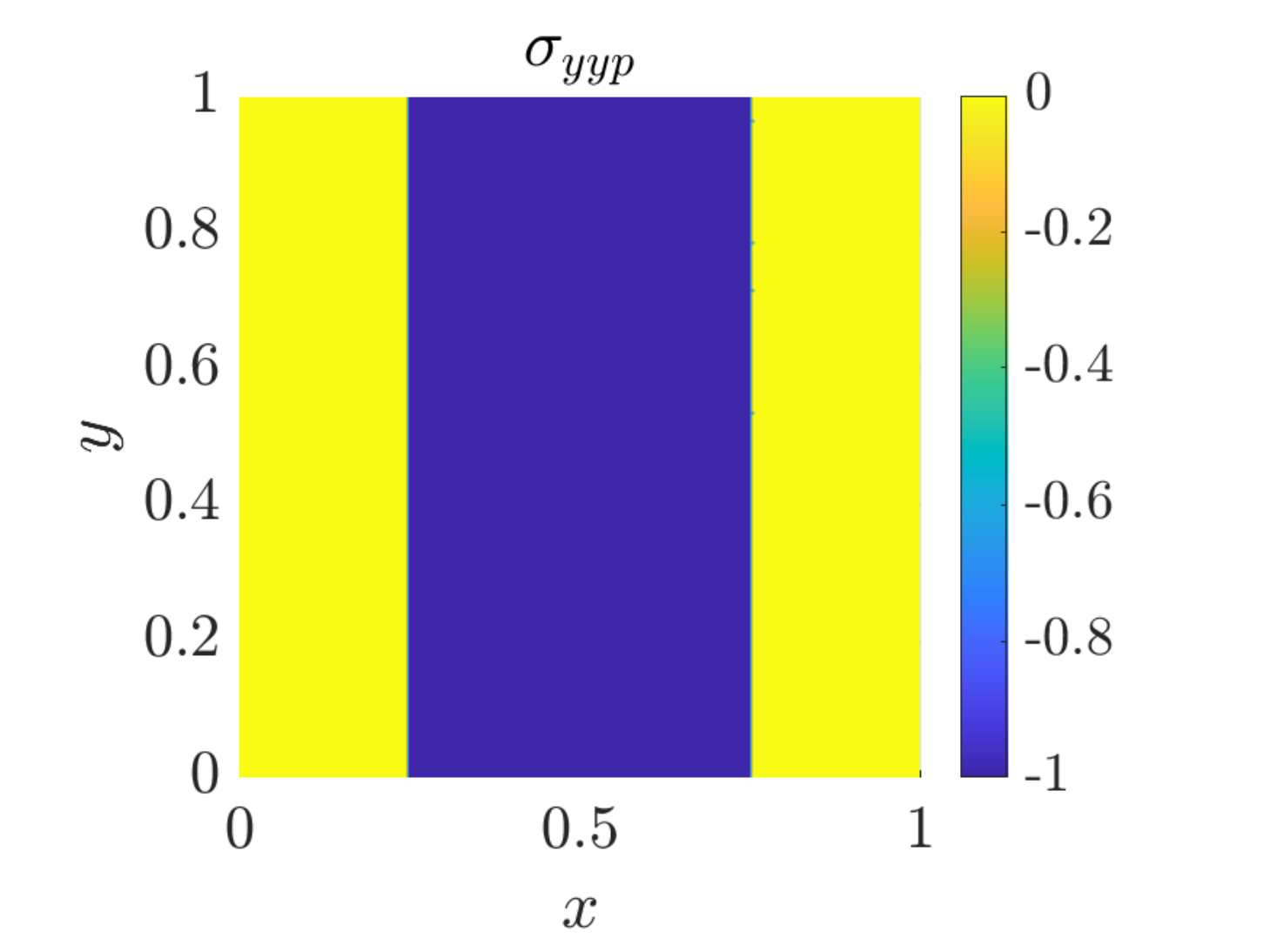}
    \end{subfigure}
\caption{An example where finding a $\bm{\sigma}_p$ is easy. Left: problem statement; right: color plot of $\sigma_{yyp}.$ Note that although $\sigma_{yyp}$ is discontinuous, $\text{div}\, \bm{\sigma}_p$ exists since it involves only the $y$ derivative of $\sigma_{yyp}$.}
\label{Demo}
\end{figure}

A more analytical way to find a $\bm{\sigma}_p$ is to assume a specific functional form of an Airy stress function $\psi_p$ with free parameters fitted to satisfy
traction boundary conditions. Note that we need only {\em one} such Airy stress function, and not a whole basis: we already have a basis of tensor-valued stress fields. We will demonstrate this for axisymmetric examples in the next section. We now proceed by assuming that a $\bm{\sigma}_p$ has been obtained. The primary academic contributions of this paper relate to the calculations undertaken after a $\bm{\sigma}_p$ has been obtained.

Recall that $\mathcal{Q}$ of Eq.\ (\ref{edefQ}) is the set of all stress fields in equilibrium with the traction $\bm{\tau}$. Therefore, if $\bm{\sigma}_p$ is subtracted from each element of $\mathcal{Q}$, we obtain the set 
\begin{equation*}
\mathcal{Q}_R=\left\{\bm{\sigma} \big| ~ \bm{\sigma}\in \,\text{Sym},~\int_{\Omega} \bm{\sigma}\cdot \bm{\sigma}\,dA<\infty,~\text{div}\,\bm{\sigma}=\bm{0}, ~ \left.\bm{\sigma n}\right|_{\partial \Omega}=\bm{0} \right\}
\end{equation*}
of all square-integrable self-equilibrated traction-free stress fields on $\Omega$. Such stress fields were dealt with in detail in \cite{tiwari}, where they were called {\em residual stresses.} The primary academic contribution of \cite{tiwari} is summarized below.

In \cite{tiwari}, we proved that the solutions to the eigenvalue problem
\begin{equation}
\begin{array}{cccl}
-\Delta \bm{\sigma} + \nabla_s \bm{\mu}  = \lambda \bm{\sigma} & \text{ and } & \textup{div} \, \bm{\sigma} = \bm{0} & \text{ in} \hspace{2mm} \Omega,\\
\bm{\sigma n} = \bm{0} & \text{ and } & \nabla_n \bm{\sigma} \cdot (\bm{t} \otimes \bm{t}) = 0 & \text{ on} \hspace{2mm} \partial \Omega
\end{array}
\label{three_eqns}
\end{equation}
form a basis in the $L^2$ norm for the set of all residual stresses of interest to us.  The definitions of $\nabla_s \bm{\mu}$, $\lambda$ and $\nabla_n \bm{\sigma} \cdot (\bm{t} \otimes \bm{t})$, along with a brief description of the minimization problem that yields the above eigenvalue problem, are presented in Appendix \ref{abrief}. 

The eigenfunctions $\bm{\phi}_i$ of Eq.~\ref{three_eqns} are orthogonal in the $L^2$ inner product. They can be computed on arbitrary geometries using the finite element method, and we have presented $\bm{\phi}_i$ computed on an annulus and a rectangle (taken from \cite{tiwari}) in Appendix \ref{abrief}.

We now return to the development of our solution method. Since $\bm{\phi}_i$ span $\mathcal{Q}_R$, each element of $\mathcal{Q}_R$ can be written as $\displaystyle \lim_{N\to \infty} \sum_{j=1}^N a_j \bm{\phi}_j$ for some real-valued sequence $(a_j)$. Consequently, the problem of minimization of $\mathcal{E}$ over $\mathcal{Q}$ is equivalent to finding the real-valued sequence $(a_i)$ that minimizes 
\begin{equation}
\hat{\mathcal{E}}(a_1,a_2,\cdots)=\displaystyle \int_{\Omega} \bm{C}^{-1} \left(\bm{\sigma}_p+\sum_{j=1}^{\infty} a_j \bm{\phi}_j\right)\cdot \left(\bm{\sigma}_p+\sum_{k=1}^{\infty} a_k \bm{\phi}_k\right)\, dA.
\label{hatE}
\end{equation}
Similarly, the problem of minimizing $\mathcal{T}$ over $\mathcal{Q}$ is equivalent to finding the real-valued sequence $(a_i)$ that minimizes 
\begin{equation}
\label{that}
\hat{\mathcal{T}}(a_1,a_2,\cdots)=\displaystyle \int_{\Omega} \left(\bar{\sigma}_p+\sum_{j=1}^{\infty} a_j \bar{\phi}_j\right)^2\, dA. 
\end{equation}
We now compute the coefficients $a_j$. We first consider general linear elastic bodies. Subsequently, we consider planar homogeneous isotropic linear elastic bodies. For such bodies, the new minimum principle holds, and calculations are simpler.

\subsection{Method based on the strain energy principle}
For general linear elastic bodies, the solution $\bm{\sigma}$ corresponds to the real-valued sequence $(a_i)$ that minimizes $\hat{\mathcal{E}}$ (Eq.~\ref{hatE}). Accordingly, we set the gradient of $\hat{\mathcal{E}}$ with respect to $a_i$ to zero, and obtain 
\begin{equation}
\int_{\Omega} \left(\sum_{j=1}^{\infty} a_j \bm{C}^{-1}\bm{\phi}_j  \right)\cdot \bm{\phi}_i \, dA =-\int_{\Omega} \bm{C}^{-1}\bm{\sigma}_p \cdot \bm{\phi}_i \, dA ~~~ \forall ~i.
\label{gleb}
\end{equation}
Truncating the expansion in the above equation to $N$ terms, and considering $1\leq i \leq N$, we set up a linear system of $N$ equations
\begin{equation}
Ma=f,
\label{simpMaf}
\end{equation}
where
\begin{equation*}
M(i,j)=\int_{\Omega}  \bm{C}^{-1} \bm{\phi}_j \cdot \bm{\phi}_i \, dA ~~~~~ \text{and} ~~~~~ f(i)=-\int_{\Omega} \bm{C}^{-1} \bm{\sigma}_p \cdot \bm{\phi}_i \, dA.
\end{equation*}
Using the above expressions, we compute the coefficient vector $a=M^{-1}f$ and find an approximate stress 
$$\bm{\sigma}^N = \bm{\sigma}_p + \sum_{i=1}^N a_i \bm{\phi}_i. $$
Note that in general, $M$ is {\em not} a diagonal matrix\footnote{Alternatively, by changing the normalization constraint in the original minimization problem, the eigenfunctions $\bm{\phi}_i$ can be made to satisfy the orthogonality conditions $\int_{\Omega} \bm{C}^{-1}\bm{\phi}_j \cdot \bm{\phi}_i\, dA=\delta_{ij}$, where $\delta_{ij}$ is the Kronecker-delta function. $M$ will then be an identity matrix, and $a=f$. However, such basis functions will depend on $\bm{C}$ and will have to be computed afresh each time $\bm{C}$ is changed.}. Also, note that the major symmetry of $\bm{C}^{-1}$ implies that $M$ is a symmetric matrix.

The true stress
\begin{equation*}
\bm{\sigma}=\lim_{N\to \infty} \bm{\sigma}^N,
\end{equation*}
where the convergence is in the strain energy norm\footnote{\label{fn1} Since the strain energy and $L^2$ norms are equivalent \cite{steigmann}, $\bm{\sigma}\in L^2$. Moreover, since $\bm{\phi}_i$ span $\mathcal{Q}_R$ in the $L^2$ norm \cite{tiwari}, it follows that $(\bm{\sigma}^N)$ converges to $\bm{\sigma}$ in the $L^2$ norm. Finally, the equivalence of the strain energy and $L^2$ norms implies that $(\bm{\sigma}^N)$ converges to $\bm{\sigma}$ in the strain energy norm.}. Further, within $\bm{\sigma}=\bm{\sigma}_p+\bm{\sigma}_h$,
\begin{equation*}
\bm{\sigma}_h=\lim_{N\to \infty} \sum_{j=1}^{N} a_j \bm{\phi}_j.
\end{equation*} 

We now turn to the special case of planar homogeneous isotropic bodies for which the planar trace principle holds.

\subsection{Method based on the planar trace principle}
For homogeneous isotropic bodies, we will minimize $\hat{\mathcal{T}}$ of Eq.\ (\ref{that}). Setting the gradient of $\hat{\mathcal{T}}$ with respect to $a_i$ to zero, we have
\begin{equation*}
\int_{\Omega} \left(\sum_{j=1}^{\infty} a_j \bar{\phi}_j  \right) \bar{\phi}_i \, dA =-\int_{\Omega} \bar{\sigma}_p \bar{\phi}_i \, dA ~~~ \forall ~i.
\end{equation*}
Truncating the expansion in the above equation to $N$ terms, and considering $1\leq i \leq N$, we obtain a linear system of $N$ equations
\begin{equation}
Ma=f,
\label{simpMaf}
\end{equation}
where
\begin{equation*}
M(i,j)=\int_{\Omega} \bar{\phi}_i \bar{\phi}_j \, dA ~~~~~ \text{and} ~~~~~ f(i)=-\int_{\Omega} \bar{\sigma}_p \bar{\phi}_i \, dA.
\end{equation*}
The above expressions can be simplified as follows. In \cite{tiwari}, we showed that the residual stress basis functions $\bm{\phi}_i$ are orthogonal in the $L^2$ inner product. That orthogonality also holds for residual stress planar traces $\bar{\phi}_i$, as follows.
\begin{theorem} \label{ipTraces}
The $L^2$ inner product of any two planar residual stresses $\bm{\sigma}_1$ and $\bm{\sigma}_2$ is equal to the $L^2$ inner product of their planar traces, i.e.,
$$ \int_{\Omega} \bm{\sigma}_1 \cdot \bm{\sigma}_2 \, dA = \int_{\Omega} \bar{\sigma}_1\, \bar{\sigma}_2 \, dA.$$
\end{theorem}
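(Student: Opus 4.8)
The plan is to exploit two structural facts about residual stresses: each is symmetric and divergence-free, so it admits an Airy stress function, and the pointwise difference of the two integrands turns out to be a pure divergence (a null Lagrangian) whose boundary flux is annihilated by the traction-free condition. Since only planar, symmetric, in-plane stress fields enter, the tensor contraction reduces to $\bm{\sigma}_1\cdot\bm{\sigma}_2=\sigma_{1xx}\sigma_{2xx}+\sigma_{1yy}\sigma_{2yy}+2\sigma_{1xy}\sigma_{2xy}$.

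First I would reduce the claim to a single integral identity. Subtracting the two integrands and cancelling common terms gives
\[
\bm{\sigma}_1\cdot\bm{\sigma}_2-\bar{\sigma}_1\bar{\sigma}_2 = 2\sigma_{1xy}\sigma_{2xy}-\sigma_{1xx}\sigma_{2yy}-\sigma_{1yy}\sigma_{2xx},
\]
so it suffices to show that the integral of the right-hand side over $\Omega$ vanishes. Because $\bm{\sigma}_1$ is symmetric with $\text{div}\,\bm{\sigma}_1=\bm{0}$, I would introduce an Airy stress function $\phi$ satisfying $\sigma_{1xx}=\phi_{,yy}$, $\sigma_{1yy}=\phi_{,xx}$, $\sigma_{1xy}=-\phi_{,xy}$. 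Substituting, the integrand collapses to $-\sigma_{2ij}\phi_{,ij}=-\bm{\sigma}_2:\nabla^2\phi$. Crucially, since $\text{div}\,\bm{\sigma}_2=\bm{0}$ and $\bm{\sigma}_2$ is symmetric, this is an exact divergence, $\sigma_{2ij}\phi_{,ij}=\text{div}(\bm{\sigma}_2\nabla\phi)$, because the extra term $\sigma_{2ij,i}\phi_{,j}$ produced by the product rule equals $(\text{div}\,\bm{\sigma}_2)\cdot\nabla\phi=0$.

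The divergence theorem then converts everything to a boundary flux:
\[
\int_{\Omega}\left(\bm{\sigma}_1\cdot\bm{\sigma}_2-\bar{\sigma}_1\bar{\sigma}_2\right)dA = -\oint_{\partial\Omega}(\bm{\sigma}_2\nabla\phi)\cdot\bm{n}\,ds = -\oint_{\partial\Omega}\nabla\phi\cdot(\bm{\sigma}_2\bm{n})\,ds,
\]
where symmetry of $\bm{\sigma}_2$ is used in the last equality. Since $\bm{\sigma}_2$ is a residual stress, $\bm{\sigma}_2\bm{n}=\bm{0}$ on the entire boundary $\partial\Omega$, so the integrand vanishes identically and the theorem follows. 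Note that only one of the two fields needs an Airy potential; the other is left as a stress and kills the boundary term directly through its traction-free condition.

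The one place requiring care — and the main obstacle — is the existence and single-valuedness of the Airy function, equivalently of $\nabla\phi$, on a multiply-connected $\Omega$, since single-valuedness of $\bm{\sigma}_2\nabla\phi$ is needed for the divergence theorem to apply with boundary loops around each hole. I would dispose of this by observing that for a residual stress the net force on each hole is $\oint_{S_r}\bm{\sigma}_1\bm{n}\,ds=\bm{0}$ automatically, because the traction vanishes identically on every hole boundary; this zero-resultant condition is precisely what renders $\nabla\phi$ single-valued. Hence $\bm{\sigma}_2\nabla\phi$ is a genuine single-valued field, the divergence theorem holds with the flux summed over the outer boundary and all internal holes, and on each component the flux $\nabla\phi\cdot(\bm{\sigma}_2\bm{n})$ is zero. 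Thus the result holds for multiply-connected as well as simply-connected bodies.
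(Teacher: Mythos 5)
Your proof is correct, and it reaches the identity by a genuinely different route than the paper's. The paper introduces Airy functions for \emph{both} stresses, normalizes them so that $\psi_i=\psi_{i,x}=\psi_{i,y}=0$ on $\partial\Omega$, and applies Green's theorem twice to prove the derivative-swapping identities $\int_\Omega \psi_{1,xy}\psi_{2,xy}\,dA=\int_\Omega \psi_{1,yy}\psi_{2,xx}\,dA=\int_\Omega \psi_{1,xx}\psi_{2,yy}\,dA$, after which the claim follows by regrouping. You use a single potential $\phi$ for $\bm{\sigma}_1$, recognize the difference of the two integrands as the null Lagrangian $-\sigma_{2ij}\phi_{,ij}=-\mathrm{div}\left(\bm{\sigma}_2\nabla\phi\right)$ (valid because $\bm{\sigma}_2$ is symmetric and divergence-free), and kill the single boundary flux with the raw traction-free condition $\bm{\sigma}_2\bm{n}=\bm{0}$. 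Your route buys real advantages. It needs no normalization of the potential's boundary values; this matters, since on a multiply-connected domain the paper's ``without loss of generality'' is too strong --- for a traction-free field, $\nabla\psi$ is constant and $\psi$ affine on \emph{each} boundary component, and these can be zeroed on only one component (e.g., $\psi=f(r)\cos\theta$ on an annulus with $f'(r_a)=f(r_a)/r_a=1$, $f'(r_b)=f(r_b)/r_b=0$ is a residual stress whose Airy gradient equals $\bm{e}_x$ on the inner circle and $\bm{0}$ on the outer). Moreover, your handling of multiple connectivity is explicit and correct: the periods of $\nabla\phi$ around a hole equal the net force on that hole, which vanishes identically for a residual stress, and single-valuedness of $\phi$ itself (tied to the net moment) is never needed. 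The paper's symmetric, all-scalar computation is conceptually lighter, but as written it is airtight only for simply-connected domains, whereas your argument covers the multiply-connected case included in the theorem's scope.
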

\begin{proof}
See Appendix \ref{ip}.
\end{proof}
Since the $\bar{\phi}_i$ are orthogonal in the $L^2$ inner product, we adjust their norms so that they become {\em orthonormal}. As a result, $M$ becomes an $N \times N$ identity matrix, and
\begin{equation}
a=f.
\label{af}
\end{equation}
The corresponding approximate solution is
\begin{equation*}
\bm{\sigma}^N= \bm{\sigma}_p + \sum_{j=1}^{N} a_j \bm{\phi}_j =\bm{\sigma}_p + \sum_{j=1}^{N} \left(- \int_{\Omega} \bar{\sigma}_p \bar{\phi}_j \, dA\right) \bm{\phi}_j,
\end{equation*}
and the true stress
\begin{equation*}
\bm{\sigma}=\lim_{N\to \infty} \bm{\sigma}^N,
\end{equation*}
where the convergence is in the strain energy norm (see Footnote \ref{fn1}).
Here, within $\bm{\sigma}=\bm{\sigma}_p+\bm{\sigma}_h,$
\begin{equation*}
\bm{\sigma}_h=\lim_{N\to \infty} \sum_{j=1}^{N} \left(- \int_{\Omega} \bar{\sigma}_p \bar{\phi}_j \, dA\right) \bm{\phi}_j.
\end{equation*}

This calculation, based on the trace of the stress, is more straightforward because the coefficient matrix need not be inverted. We can compute each coefficient in the expansion independently. However, this calculation is restricted to planar homogeneous isotropic elastic bodies.

\begin{figure}[t!]
	\centering
	\includegraphics [width=0.75\linewidth]{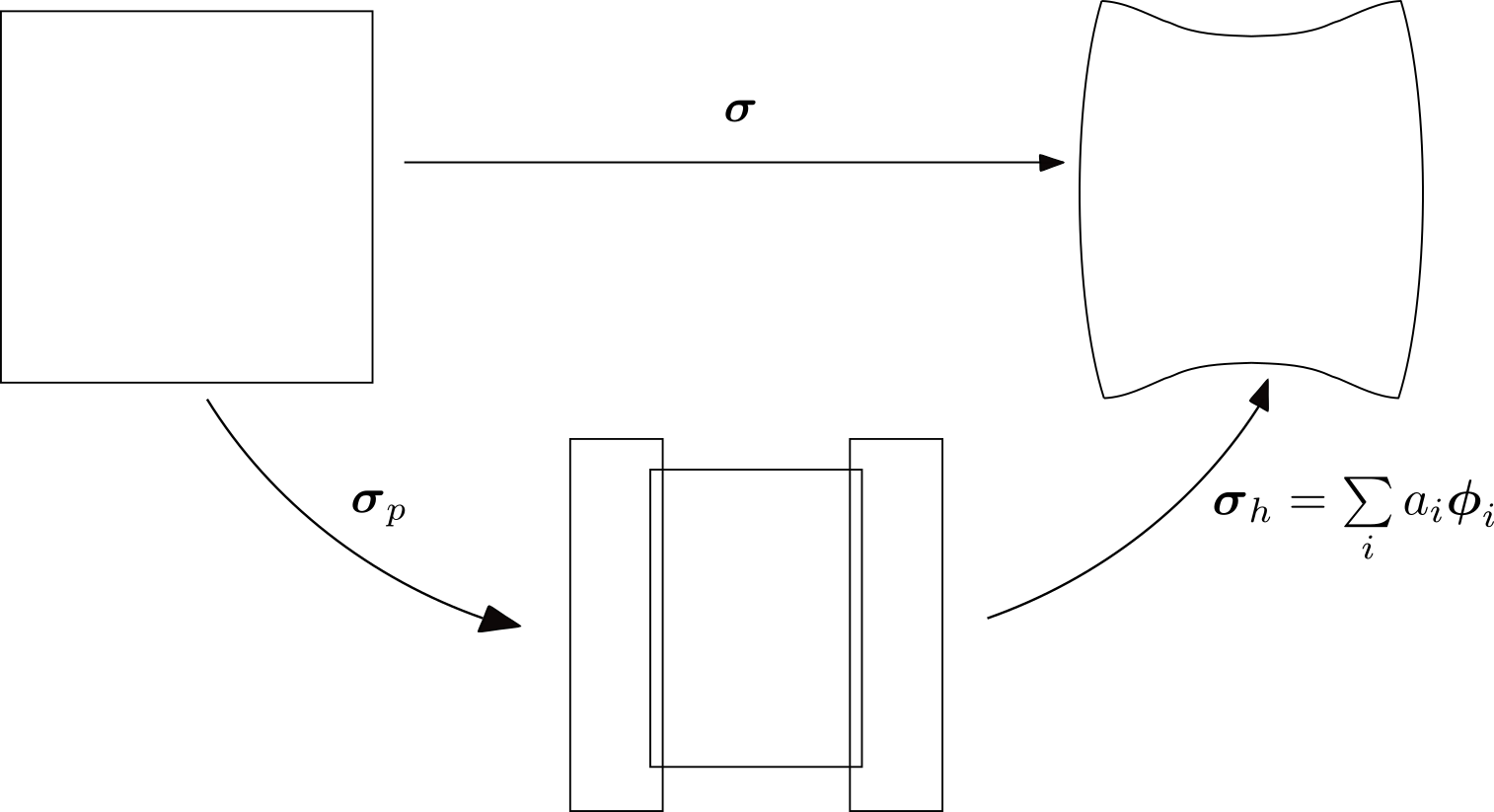}
\caption{A geometric interpretation of the splitting $\bm{\sigma}=\bm{\sigma}_p + \bm{\sigma}_h$ for the problem of the pressurized square block in Figure \ref{Demo}. Though $\bm{\sigma}_p$ and $\bm{\sigma}_h$ are individually incompatible, their sum gives the correct solution.}
\label{decomposition}
\end{figure}
A geometric interpretation of the splitting $\bm{\sigma}=\bm{\sigma}_p + \bm{\sigma}_h$ is given in Figure \ref{decomposition}. Note that in the methods using the principle of minimum complementary energy, a similar splitting of the Airy stress function into a traction-satisfying part $\psi_p$ and a traction-free part $\psi_h$, and a subsequent expansion of $\psi_h$ in suitable global functions has been employed in prior works (e.g., see Section 51 in \cite{timoshenko}). However, analytically obtaining such expansion functions is possible only for special geometries such as rectangles or annuli. Our stress basis functions $\bm{\phi}_i$, being computationally oriented, apply through finite element analysis to {\em any} sufficiently-regular geometry \cite{tiwari}. Moreover, for multiply-connected bodies, the Airy stress function exists only when the net force {\em and} moment on each hole is individually zero \cite{gurtin}, while our method based on the strain energy principle has no such restrictions. The trace energy principle does not work if the net force on each hole is not zero, but it does not require that the net moment on each hole also be zero.

We close this section by summarizing our solution methods. To find a sequence of approximate stresses $(\bm{\sigma}^N)$ in a linear elastic body which is acted upon by a traction $\bm{\tau}$, we first find {\em any} (usually incompatible\footnote{If $\bm{\sigma_p}$ is in equilibrium with $\bm{\tau}$ and is compatible, then it {\em is} the true stress.}) stress $\bm{\sigma}_p$ in equilibrium with $\bm{\tau}$. Then from the strain energy principle, the coefficients in
\begin{equation}
\bm{\sigma}^N=\bm{\sigma}_p+\sum_{j=1}^{N} a_j \bm{\phi}_j
\label{hetero}
\end{equation}
are given by $a=M^{-1}f$ with
\begin{equation}
\begin{aligned}
M(i,j)=\int_{\Omega}  \bm{C}^{-1} \bm{\phi}_j \cdot \bm{\phi}_i \, dA & ~~~~~ \text{and}\\
 f(i)=-\int_{\Omega} \bm{C}^{-1} \bm{\sigma}_p \cdot \bm{\phi}_i \, dA &, ~~~~~~ 1\leq i \leq N, ~ 1\leq j \leq N.
\end{aligned}
\nonumber
\end{equation}
For the special case of planar homogeneous isotropic bodies, we use the planar trace principle wherein the coefficients are obtained without matrix inversion, and 
\begin{equation}
\bm{\sigma}^N=\bm{\sigma}_p+\sum_{j=1}^{N} \left(- \int_{\Omega} \bar{\sigma}_p \bar{\phi}_j \, dA\right) \bm{\phi}_j,
\label{homo}
\end{equation}
provided the net force on each internal hole in the body is zero.

In the next section, we will use Eqs.~\ref{hetero} and \ref{homo} to solve eight planar stress problems. 

\section{Examples} \label{examples}
\subsection{Homogeneous isotropic bodies}
In this section, we solve six stress problems involving planar homogeneous isotropic bodies obeying linear elasticity. A summary of these examples is listed below. 
\begin{itemize}
\item In the first example, we consider an annulus pressurized uniformly from the inside.
\item In the second example, we study a square block subjected to discontinuous traction.
\item In the third example, we consider an arbitrarily shaped body containing a uniformly pressurized circular hole. 
\item In the fourth example, we extend the planar trace principle to incorporate body forces arising from a potential, and use it to find the stress in a square block resting on a table under gravity.
\item In the fifth example, we find the stress in an annulus subjected to a traction with a chosen azimuthal wavenumber and corresponding to non-zero net force on the hole, with the chosen wavenumber rendering the problem essentially 1D.
\item In the sixth example, we consider a general multiply-connected body having a hole subjected to (1) non-zero net force and zero net moment, (2) zero net force and zero net moment, and (3) zero net force and non-zero net moment. 
\end{itemize}

In the first four examples, we will use the planar trace principle. In the fifth and sixth examples, we will see that when the net force on the hole is non-zero, the planar trace principle does not work; however, the strain energy principle does, consistent with the theory developed in Section \ref{variation}.

All of these six examples correspond to the plane-strain case, with Young's modulus $Y=1$ and Poisson's ratio $\nu=0.33$, in any appropriate units. 

\subsubsection{Example 1: Internally pressurized annulus}
We consider an annulus centered at the origin with inner and outer radii of $r_a=0.1$ and $r_b=0.3$, respectively. It is subjected to pressure $p=1$ on the inner boundary, while the outer boundary is traction-free.

Since the problem is axisymmetric, we begin by {\em constructing} an axisymmetric stress $\bm{\sigma}_p$ that is in equilibrium with this pressure, i.e., $\bm{\sigma}_p$ satisfies
\begin{equation}
\begin{aligned}
\sigma_{rrp}' +\frac{\sigma_{rrp}-\sigma_{\theta\theta p}}{r}=0~~~~~~ &\text{and} ~~~~~~\sigma_{r\theta p}=0 ~~~ \text{in} ~~~ \Omega, \\
\sigma_{rrp}\big|_{r_a}=-1 ~~~ ~~~&\text{and} ~~~~~~ \sigma_{rrp}\big|_{r_b}= 0,\\
\end{aligned}
\label{pressBVPp}
\end{equation}
where prime denotes derivative with respect to $r$. To obtain one such $\bm{\sigma}_p$, we assume the Airy stress function $\psi_p$ corresponding to $\bm{\sigma}_p$ to be
\begin{equation}
\psi_p=c_1 r+c_2 r^2,
\label{airyp}
\end{equation}
where $c_1$ and $c_2$ are free parameters. The resulting stress components are \cite{barber}
\begin{equation}
\sigma_{rrp}=\frac{\psi_p'}{r}=\frac{c_1}{r}+2c_2, ~~~~~ \sigma_{\theta \theta p}=\psi_p''=2c_2 ~~~~~ \text{and} ~~~~~ \sigma_{r\theta p}=0.
\label{stressp}
\end{equation}
We solve for the free parameters using the two boundary conditions in Eqs.~\ref{pressBVPp} and obtain
\begin{equation}
c_1=-\frac{r_a r_b }{r_b-r_a},~~~ c_2=\frac{r_a}{2 \left( r_b-r_a\right)}.
\end{equation}
These constants are substituted in Eq.~\ref{airyp}, which in turn is substituted in Eq.~\ref{stressp} to obtain $\bm{\sigma}_p$. It can be checked that $\bm{\sigma}_p$ does not satisfy strain compatibility, which is why it differs from the true stress $\bm{\sigma}$. We use the planar trace principle and substitute $\bm{\sigma}_p$ in Eq.~\ref{homo} to find, for a given $N$, an approximate stress $\bm{\sigma}^N$. Guided by the axisymmetry of the problem, we only use axisymmetric basis functions $\bm{\phi}_i$ in the expansion.
\begin{figure}[t!]
    \begin{subfigure}
	\centering
	\includegraphics [width=0.5\linewidth]{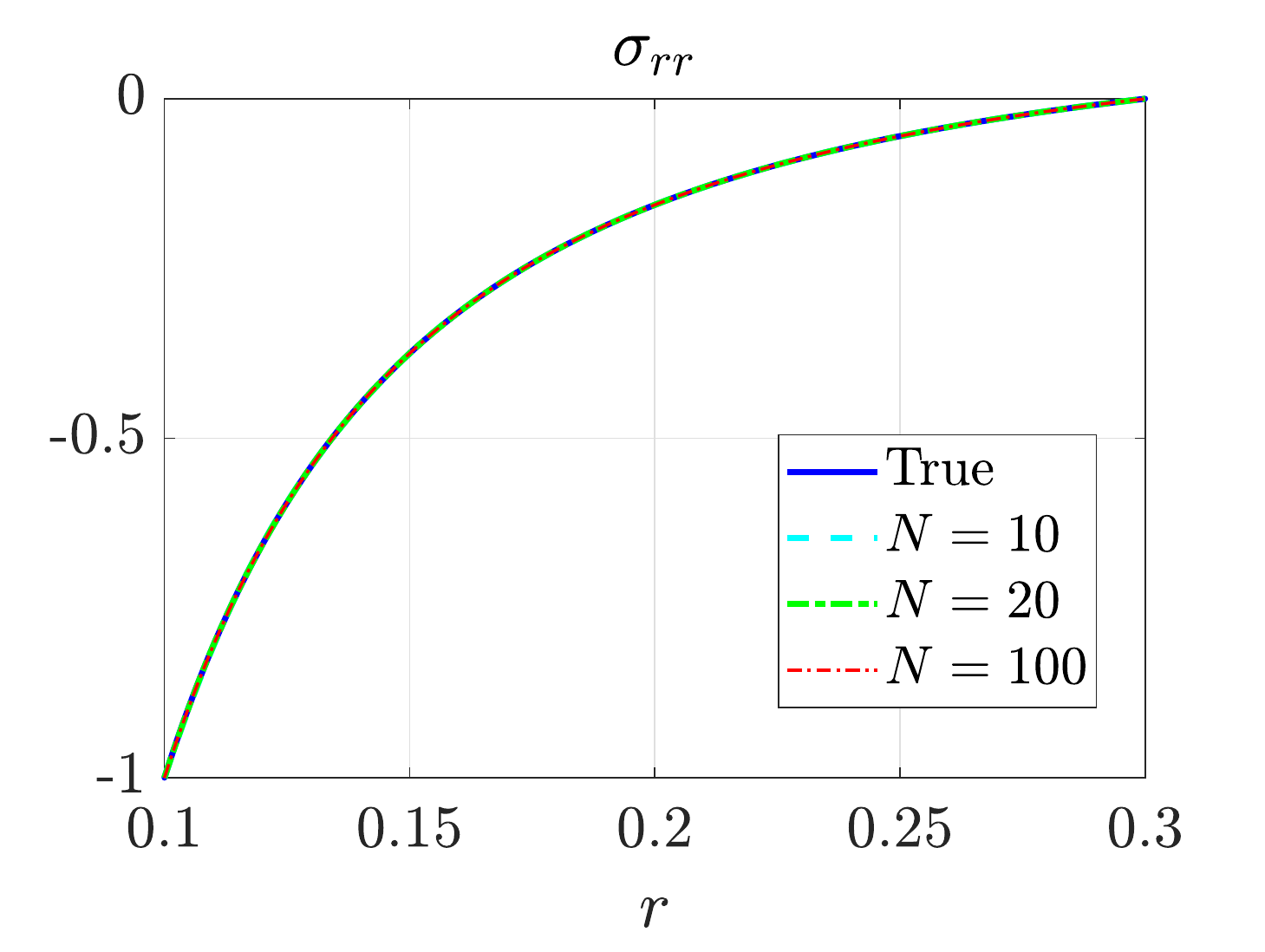}
    \end{subfigure}
    \begin{subfigure}
	\centering
	\includegraphics [width=0.5\linewidth]{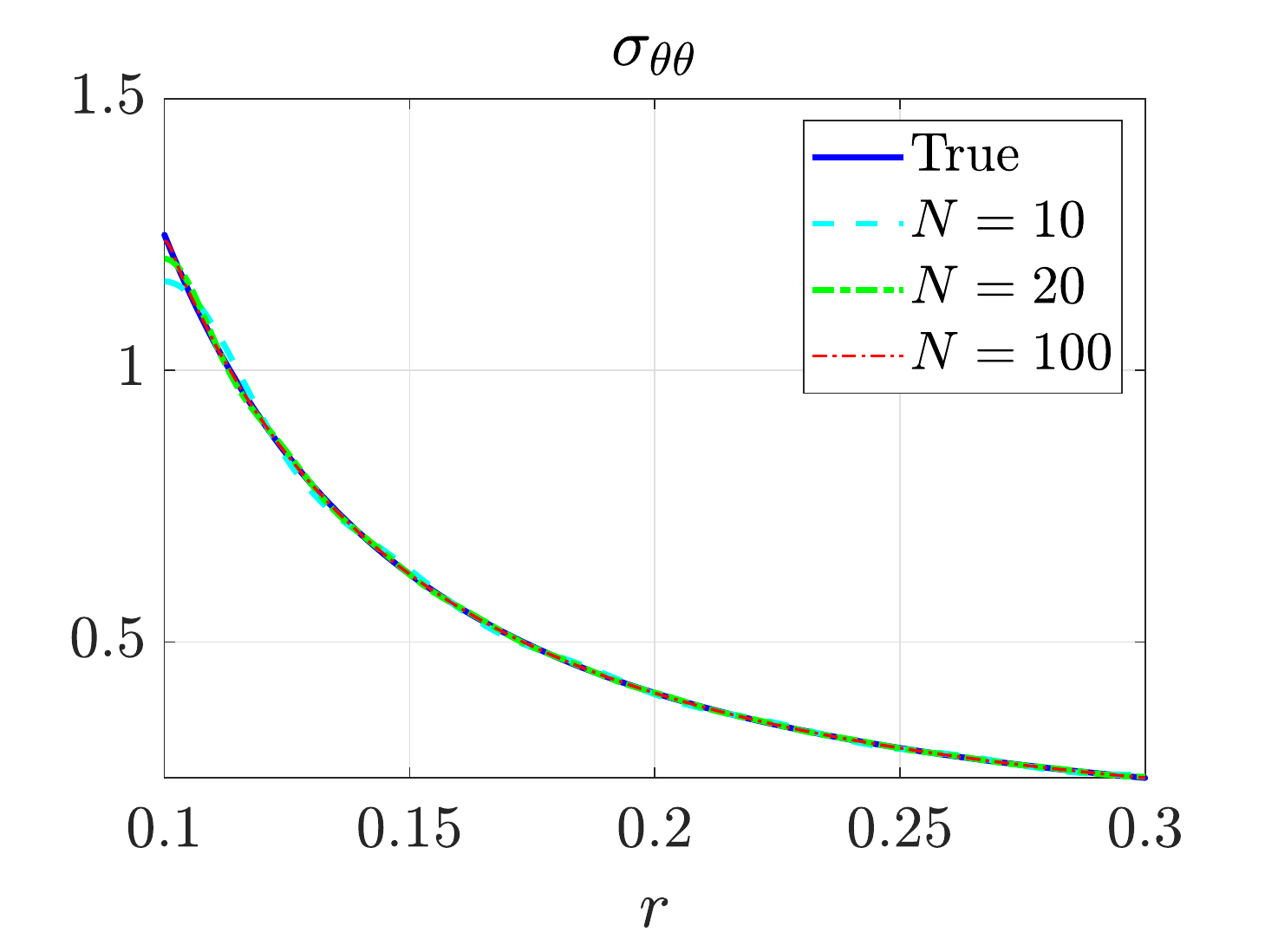}
    \end{subfigure}
\caption{Radial variation of the true and the approximate stresses for the pressurized annulus in Example 1, using 10, 20 and 100 basis functions $\bm{\phi}_i$. Note that $\sigma_{r\theta}=\sigma_{r\theta}^N=0$.}
\label{SEx1}
\end{figure}

We plot the radial variation of the approximations $\bm{\sigma}^N$, along with the true stress $\bm{\sigma}$ (obtained by direct numerical integration of the governing equations; details omitted), in Figure \ref{SEx1} for $N=10, 20$ and 100. We see that the approximations are good, even for $N=10$.
 
To quantify the accuracy of the approximation, we define the approximation error $E_N$ as the strain energy norm of  $\bm{\sigma}-\bm{\sigma}^N$ divided by the strain energy norm of $\bm{\sigma}$, i.e.,
$$E_N=\frac{\norm{\bm{\sigma}-\bm{\sigma}^N}_{\mathcal{E}}}{\norm{\bm{\sigma}}_{\mathcal{E}}}=\left(\frac{\int_{\Omega} \bm{C}^{-1}(\bm{\sigma}-\bm{\sigma}^N)\cdot (\bm{\sigma}-\bm{\sigma}^N)\, dA}{\int_{\Omega} \bm{C}^{-1} \bm{\sigma}\cdot \bm{\sigma}\, dA}\right)^{0.5}.$$
$E_N$ is plotted in Figure \ref{EEx1} in blue, both on linear (left) and log-log (right) scales. Note that the point on the extreme left ($N=0$) in the left panel corresponds to $\bm{\sigma}_p$; a sharp decay in $E_N$ is seen even with the incorporation of just one basis function ($N=1$). The dotted black line near $N=500$ in the log-log plot has a slope of $-1.5$, i.e., it decays like $1/N^{1.5}$ for large $N$, and we find that $E_N$ also decays at approximately the same rate.

In the left panel of Figure \ref{EEx1}, we have also plotted the strain energies $\mathcal{E}_N$ corresponding to the approximate stresses in orange. The dashed horizontal orange line indicates the true strain energy $\mathcal{E}$. It is found that $\mathcal{E}_N$ is within 0.01\% of $\mathcal{E}$ with just $N=10$ basis functions.
\begin{figure}[t!]
    \begin{subfigure}
	\centering
	\includegraphics [width=0.5\linewidth]{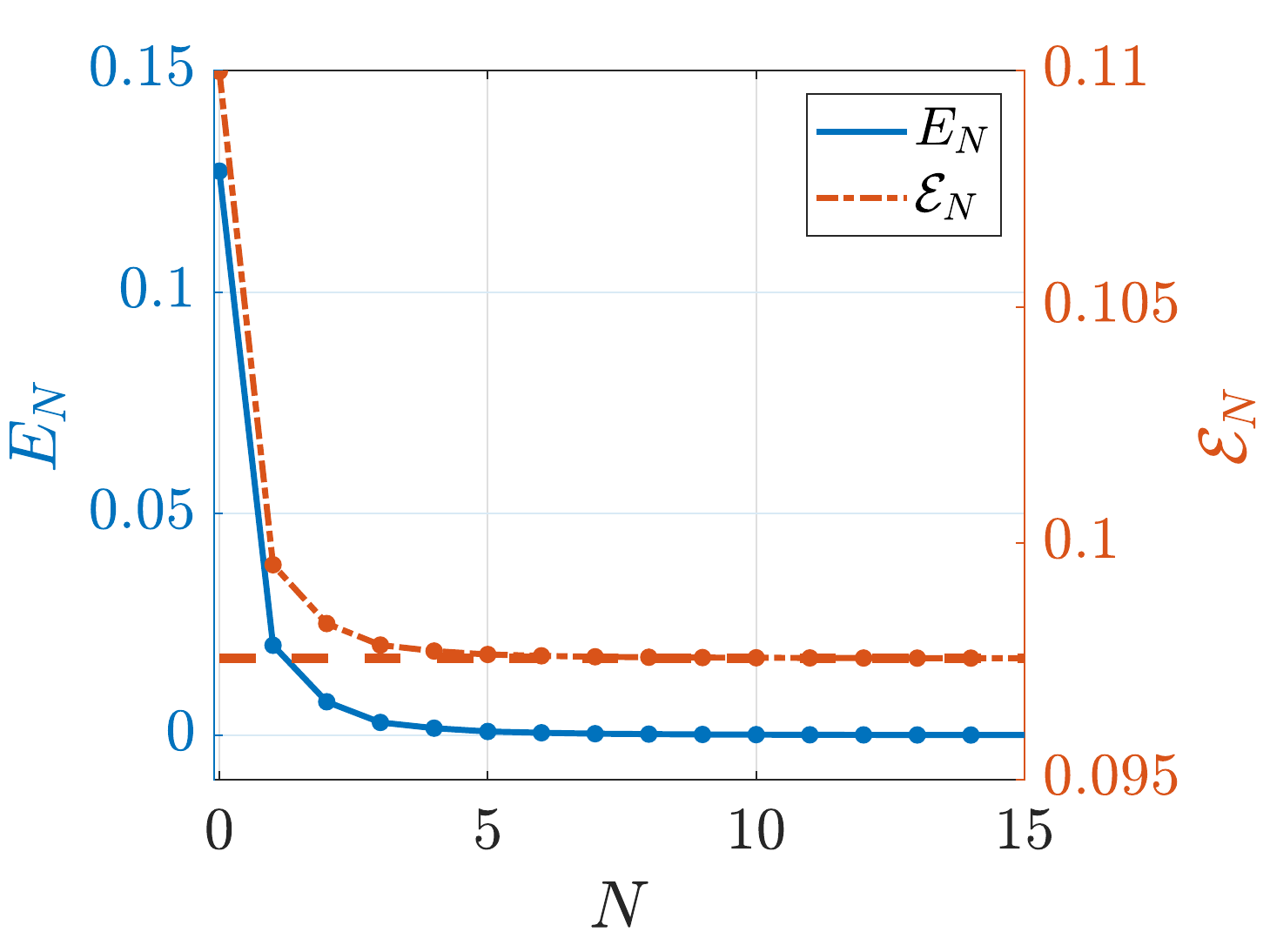}
    \end{subfigure}
    \begin{subfigure}
	\centering
	\includegraphics [width=0.5\linewidth]{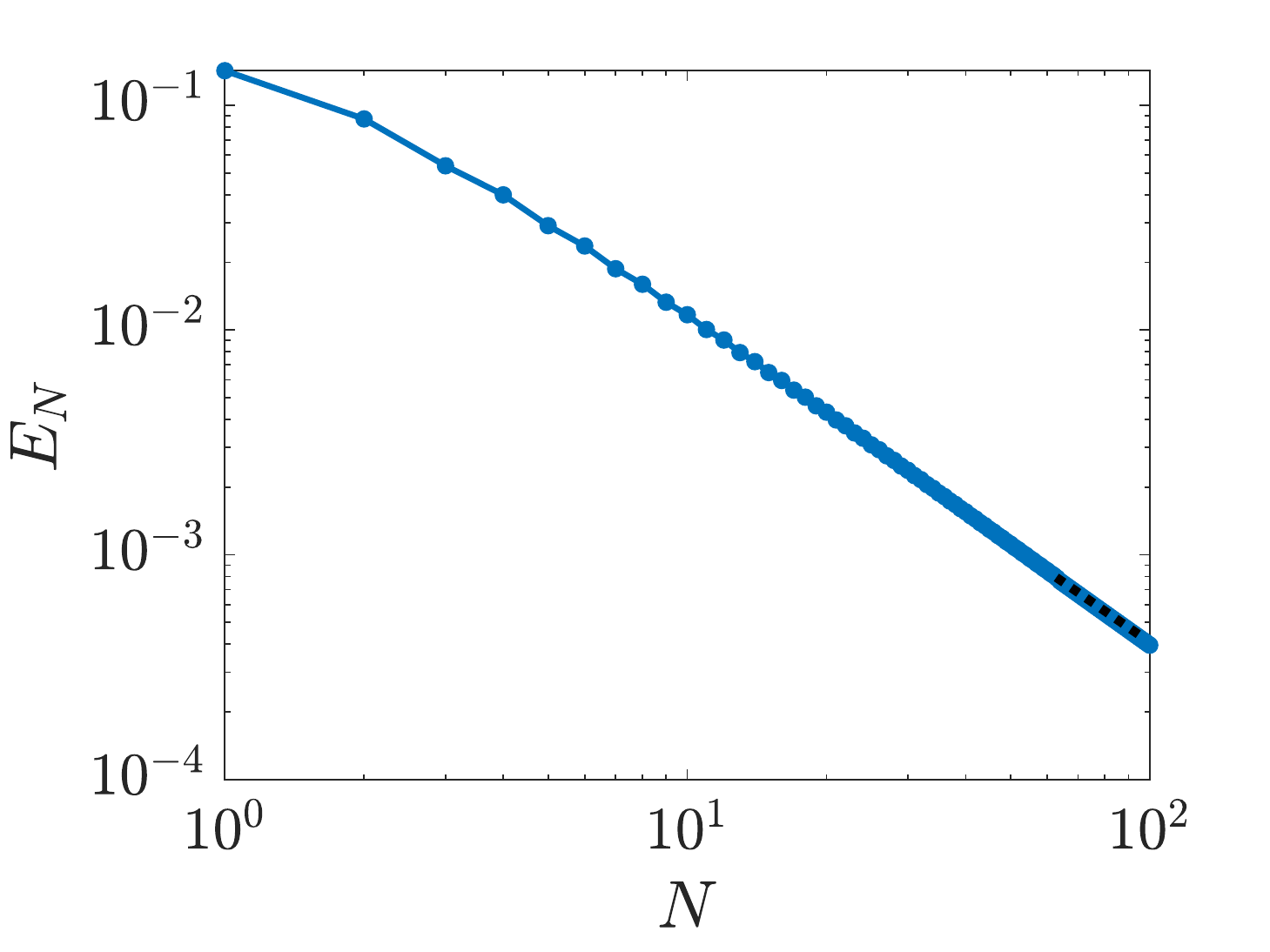}
    \end{subfigure}
\caption{Left: approximation error $E_N$ (in blue) and strain energy corresponding to approximate stress $\mathcal{E}_N$ (in orange) versus $N$ for Example 1; the dashed horizontal orange line corresponds to the strain energy of the true stress. Right: $E_N$ on a log-log scale.}
\label{EEx1}
\end{figure}

In Figure \ref{split}, we plot the radial variation of all three of $\bm{\sigma}$, $\bm{\sigma}_p$ and $\bm{\sigma}_h^N$ (for $N=100$) to depict graphically the splitting $\bm{\sigma}=\bm{\sigma}_p+\bm{\sigma}_h$. To indicate that the computed homogeneous stress is approximate, we have used the $\approx$ sign.
\begin{figure}[t!] 
	\centering
	\includegraphics [width=1.05\linewidth]{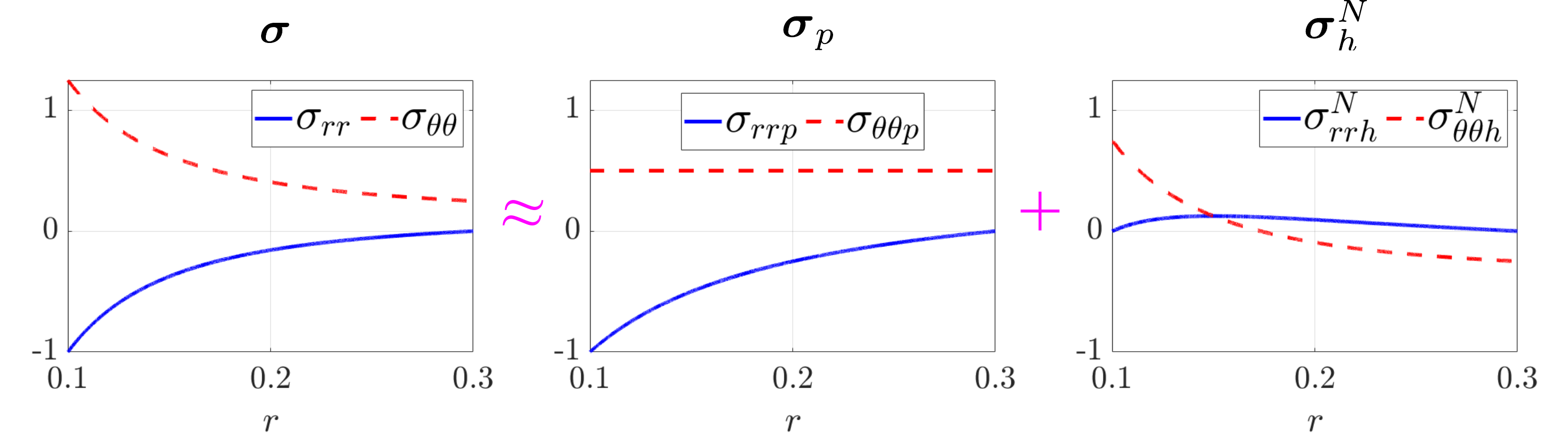}
\caption{Graphical depiction of the splitting $\bm{\sigma}=\bm{\sigma}_p+\bm{\sigma}_h$ for Example 1 (with $N=100$).}
\label{split}
\end{figure}

\subsubsection{Example 2: Homogeneous block subjected to discontinuous pressure} \label{nup}
We now consider the problem presented in the introduction (Figure \ref{Demo}). The square block shown in Figure \ref{Demo} is subjected to pressure $p=1$ on the middle halves of the top and bottom edges.

\begin{figure}[t!]
	\centering
	\includegraphics [width=1\linewidth]{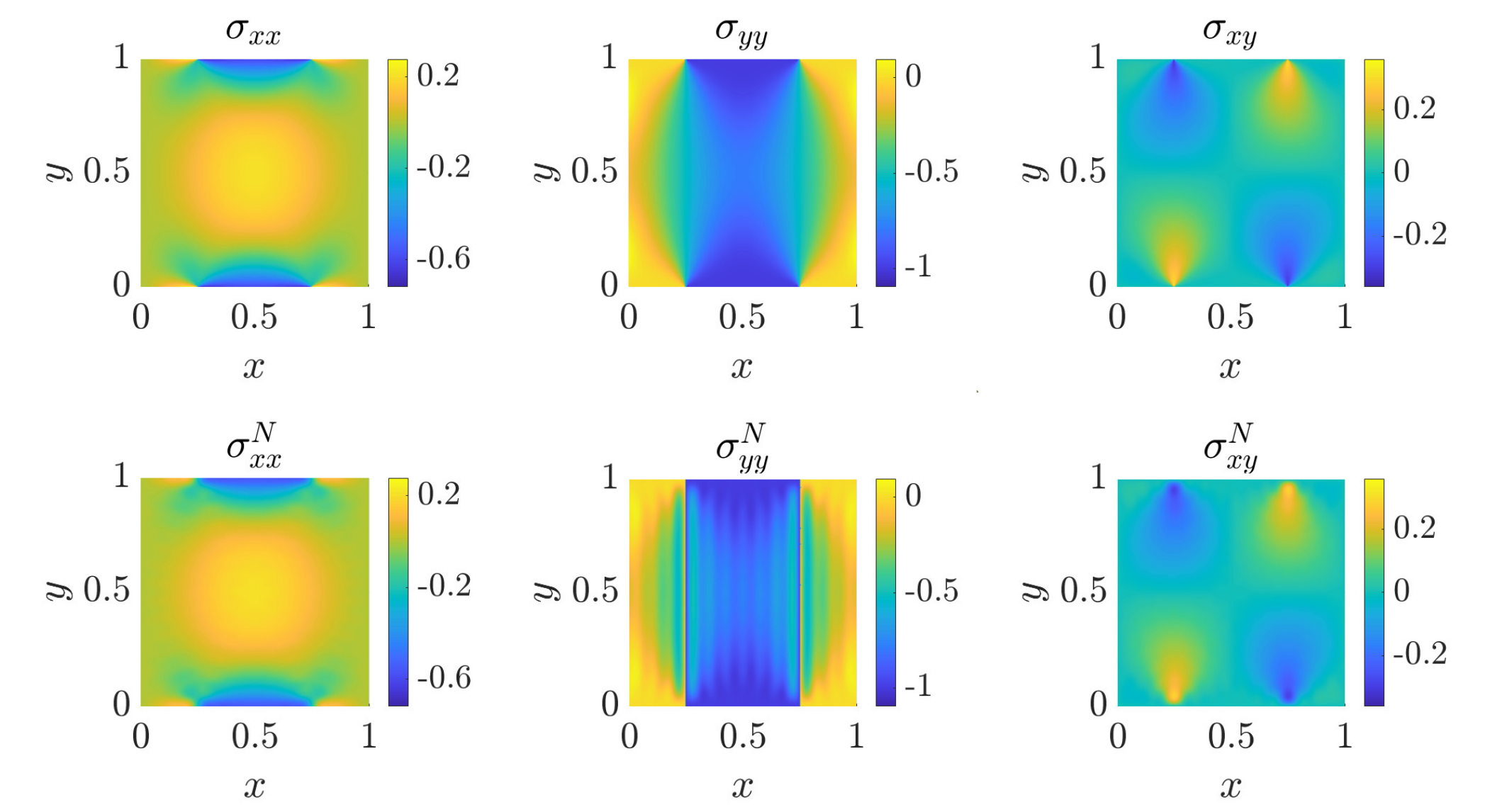}
\caption{True (top row) and approximate (bottom row) stresses in the square block subjected to discontinuous pressure in Example 2 (with $N=500$).}
\label{DTracS}
\end{figure}
\begin{figure}[t!]
    \begin{subfigure}
	\centering
	\includegraphics [width=0.32\linewidth]{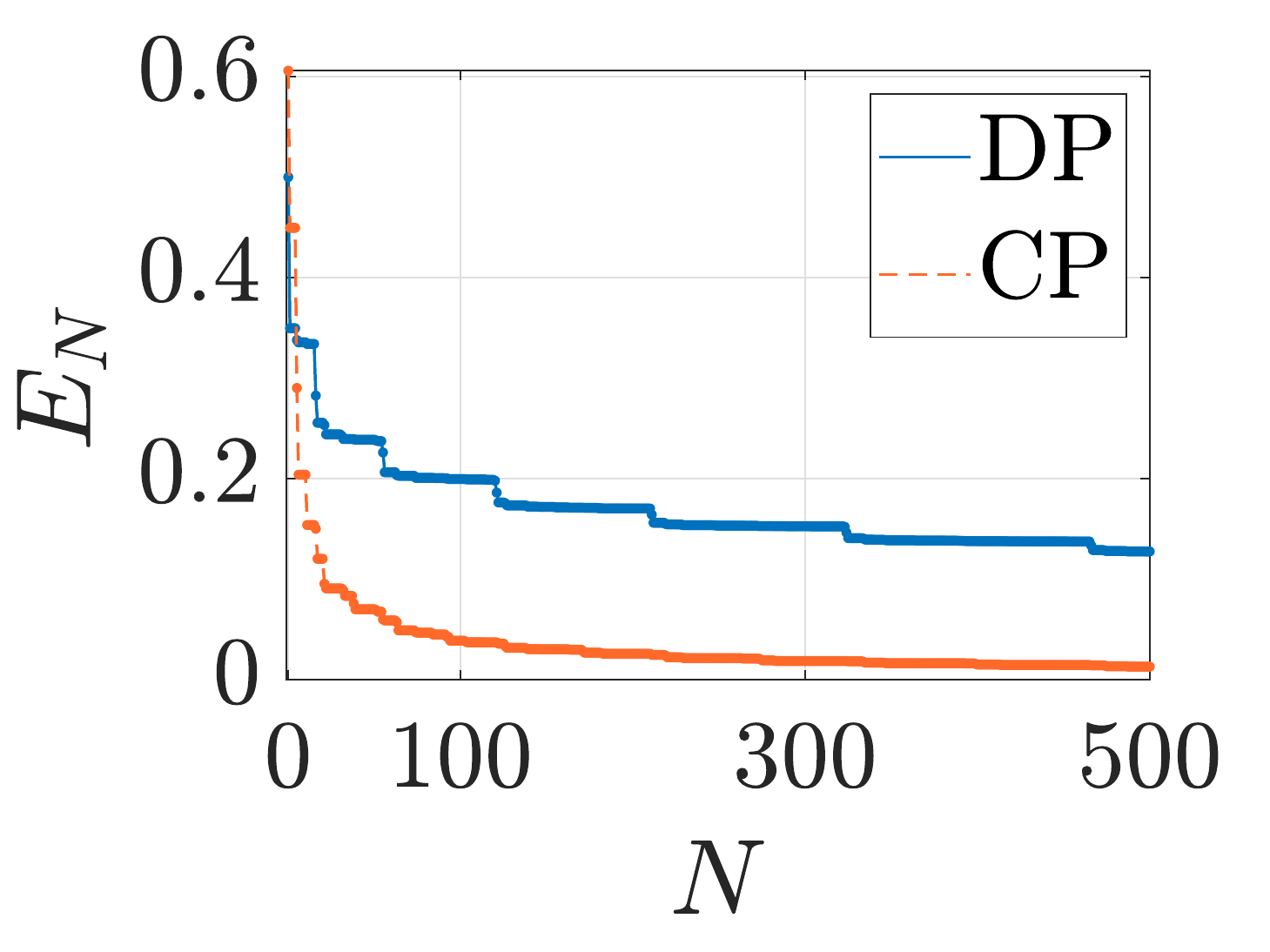}
    \end{subfigure}
    \begin{subfigure}
	\centering
	\includegraphics [width=0.32\linewidth]{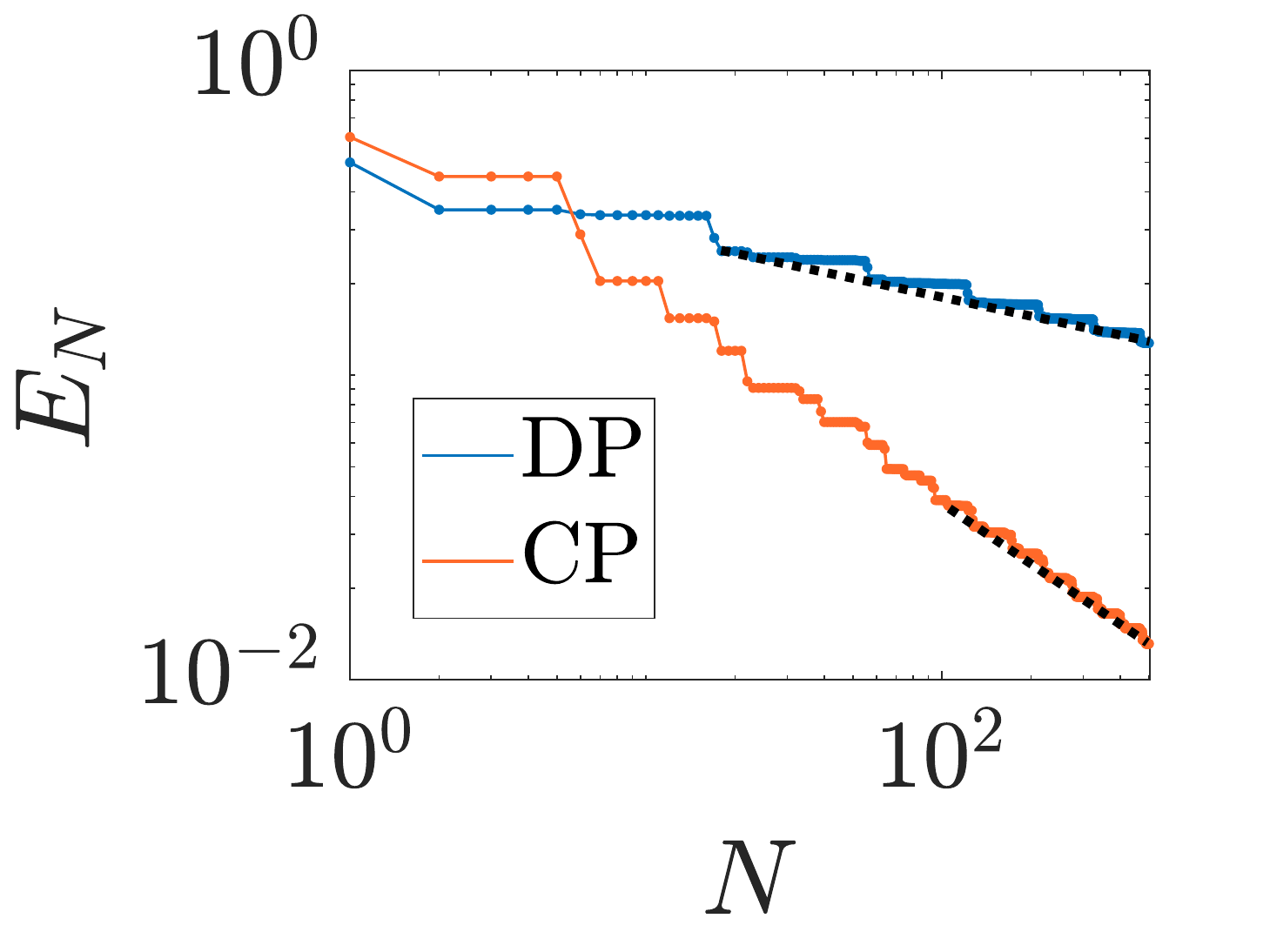}
    \end{subfigure}
    \begin{subfigure}
	\centering
	\includegraphics [width=0.32\linewidth]{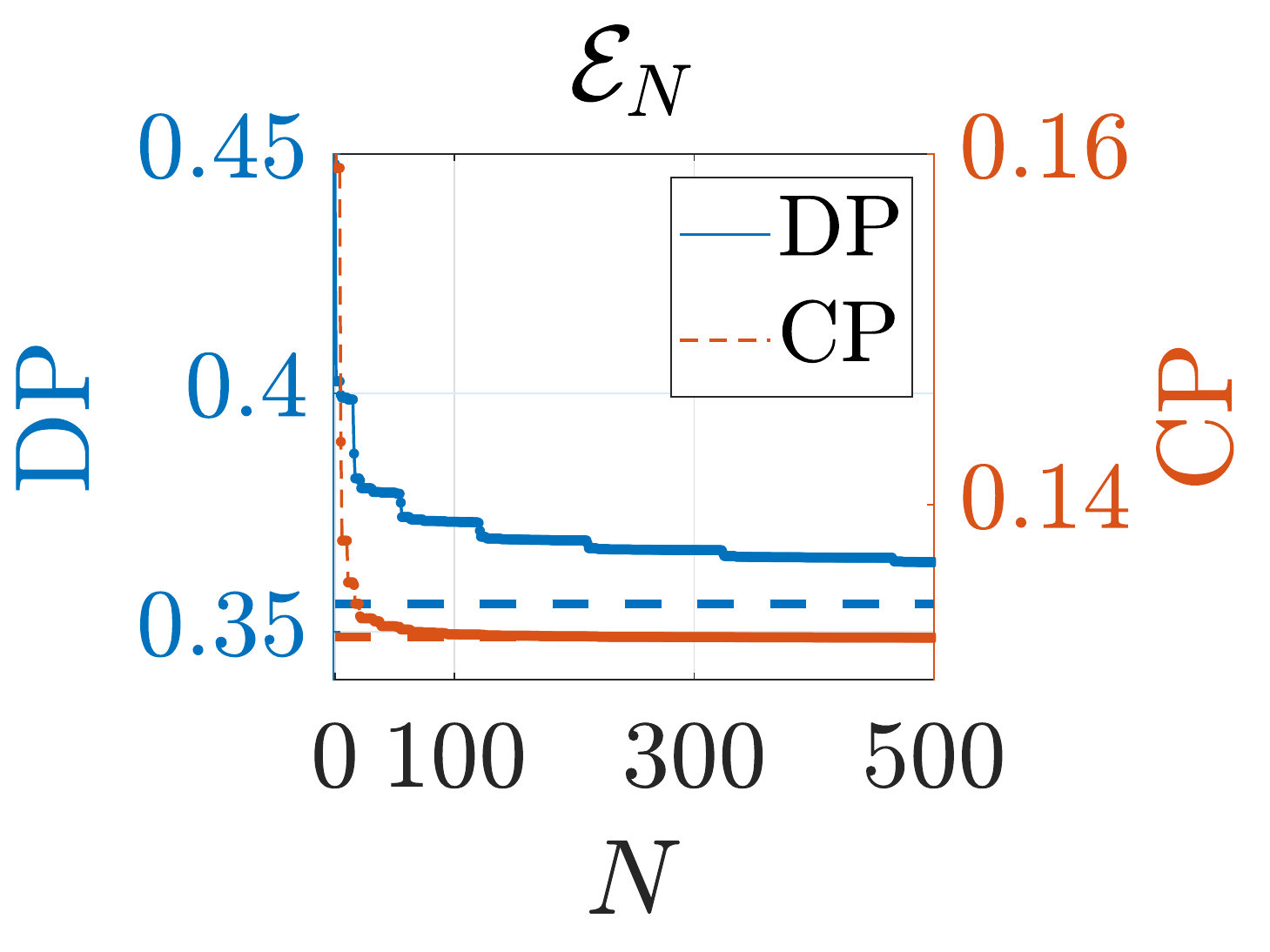}
    \end{subfigure}
\caption{In the figure legends and labels, `DP' refers to the discontinuous pressure case and `CP' refers to the continuous pressure case of Example 2. Left panel: Approximation error $E_N$ versus $N$ on a linear scale; blue: discontinuous pressure, orange: continuous pressure. Middle panel: $E_N$ on a log-log scale. Right panel: Strain energy $\mathcal{E}_N$ corresponding to the approximate stresses; the dashed horizontal lines indicate true strain energies.}
\label{EDTrac}
\end{figure}

A candidate $\bm{\sigma}_p$ was given in Eq.~\ref{casesintro}, and is reproduced below for the ease of reading:
\begin{equation}
\displaystyle
\sigma_{xxp}=0, ~~~ \sigma_{xyp}=0, ~~~\\
\sigma_{yyp}=
\begin{cases}
-p, ~~~~ 1/4\leq x \leq 3/4,\\
0, ~~~~~~\, \text{otherwise.}
\end{cases}
\nonumber
\end{equation}
The strain corresponding to $\bm{\sigma}_p$ is discontinuous and thus incompatible. Therefore, $\bm{\sigma}_p$ is different from the true stress $\bm{\sigma}$. We substitute the above $\bm{\sigma}_p$ in Eq.~\ref{homo} corresponding to the planar trace principle to find the approximations $\bm{\sigma}^N$.

The contours on the undeformed geometry of the true stress (computed using the finite element software Abaqus; details omitted) and the approximate stress (with $N=500$) are shown in Figure \ref{DTracS}. The match is reasonably good, except perhaps for the $yy$ component of the stress, wherein Gibbs oscillations \cite{hewitt} are observed (bottom-middle panel of Figure \ref{DTracS}).

The approximation error $E_N$ is plotted against $N$ in blue on linear and log-log scales in the left and middle panels of Figure \ref{EDTrac}, respectively. From the log-log plot, $E_N$ seems to be decaying but not at a constant rate; the black dotted line fitted to the decaying regime has a slope of $-0.22$. This slow convergence is also mirrored in the right panel of Figure \ref{EDTrac}, where we plot in blue the strain energies $\mathcal{E}_N$ corresponding to the approximate stresses.

The reason for the slow convergence and Gibbs oscillations is that while the stress basis functions $\bm{\phi}_i$ are smooth \cite{tiwari}, our candidate $\bm{\sigma}_p$ is discontinuous, thereby making the limiting $\bm{\sigma}_h^N$ also discontinuous (note that the true stress $\bm{\sigma}$ is continuous). We see this in Figure \ref{splitSq} in the graphical depiction of the splitting $\bm{\sigma}=\bm{\sigma}_h+\bm{\sigma}_p$ where we observe that the approximate $\sigma_{yyh}$ (for $N=500$) needs to approximate a discontinuous function.
\begin{figure}[t!]
	\centering
	\includegraphics [width=1\linewidth]{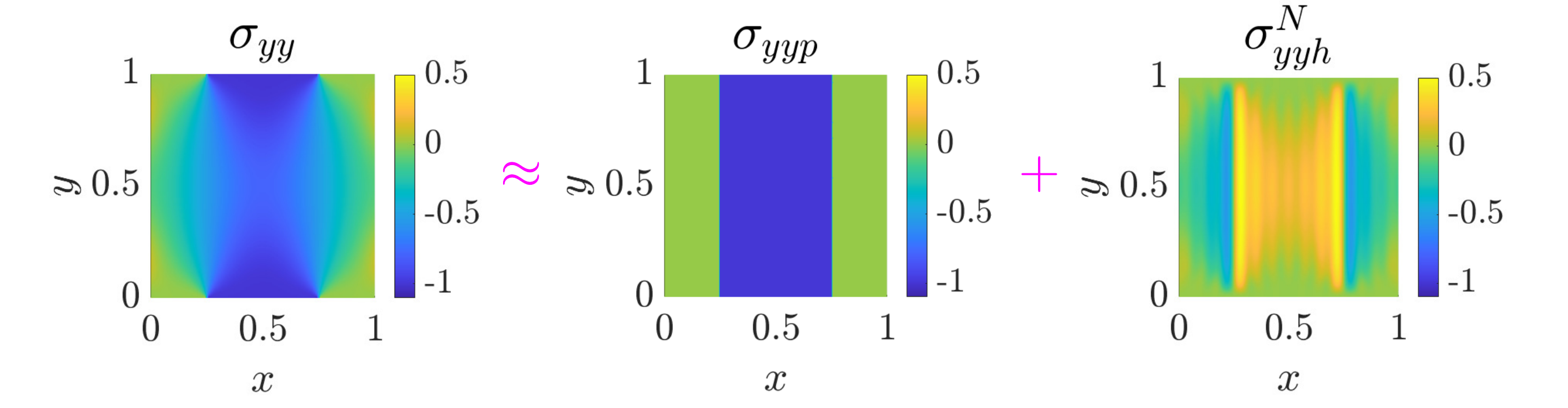}
\caption{Graphical depiction of the splitting $\bm{\sigma}=\bm{\sigma}_p+\bm{\sigma}_h$ through the $yy$ stress components for the square block subjected to discontinuous pressure (Example 2). It is seen that $\sigma_{yyh}^N$ seems to be approximating a discontinuous function.}
\label{splitSq}
\end{figure}

If the loading has greater regularity, faster convergence is observed. We demonstrate this by repeating the above problem with a different pressure $\hat{p}$, which is continuous and has a continuous slope, keeping the net force on each edge the same as before. The expression for $\hat{p}$ is
\begin{equation}
\begin{aligned}
\displaystyle
\hat{p}&=
\begin{cases}
256\left(x- \frac{1}{4}\right)^2\left(x- \frac{3}{4}\right)^2, ~~~~  \frac{1}{4} \leq x \leq  \frac{3}{4},\\
0, ~~~~~~\, \text{otherwise.}
\end{cases}
\end{aligned}
\end{equation}

Accordingly, we take
\begin{equation}
\displaystyle
\sigma_{xxp}=0, ~~~ \sigma_{xyp}=0, ~~~\\
\sigma_{yyp}=
\begin{cases}
-256\left(x- \frac{1}{4}\right)^2\left(x- \frac{3}{4}\right)^2, ~~~~ \frac{1}{4}\leq x \leq \frac{3}{4},\\
0, ~~~~~~\, \text{otherwise.}
\end{cases}
\end{equation}

$E_N$ and $\mathcal{E}_N$ for this loading are plotted in orange in Figure \ref{EDTrac}. In the log-log plot, $E_N$ decays at a faster average rate: the black dotted line fitted to the decaying regime has a slope of $-0.72$. From the $\mathcal{E}_N$ plot in the right panel of Figure \ref{EDTrac} (orange curve), we find that $\mathcal{E}_N$ is within $1\%$ of the true strain energy with just $N=20$ basis functions.

That the convergence is faster for the continuous loading is also borne out by Figure \ref{DCNProg}, wherein we have shown the color plots of the $yy$ component of $\bm{\sigma}$, along with those of the approximations $\bm{\sigma}^N$ with $N=1,10$ and 100. We see that the approximations become better as $N$ increases, and the one for $N=100$ is visually indistinguishable from the true stress. Also, Gibbs oscillations have disappeared since the limiting $\bm{\sigma}_{h}$ is continuous. The latter is also indicated by Figure \ref{syyDC}, which shows the $yy$ component of $\bm{\sigma}^N_h$ (with $N=500$), along with those of $\bm{\sigma}$ and $\bm{\sigma}_p$.
\begin{figure}[t!]
	\centering
	\includegraphics [width=1.1\linewidth]{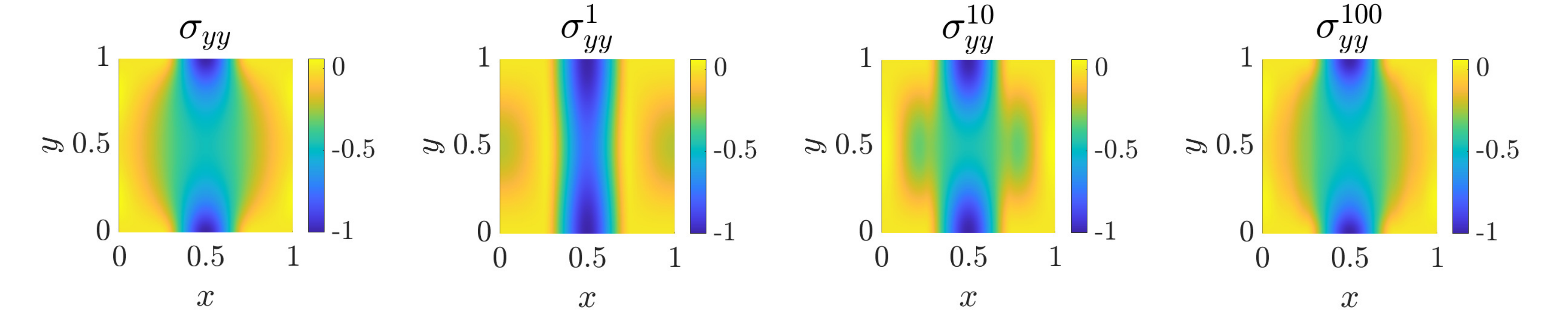}
\caption{True (left-most) and approximate $yy$ components of the stress, with $N=1,10$ and 100, for the case of continuous pressure in Example 2.}
\label{DCNProg}
\end{figure}
\begin{figure}[t!]
	\centering
	\includegraphics [width=1.05\linewidth]{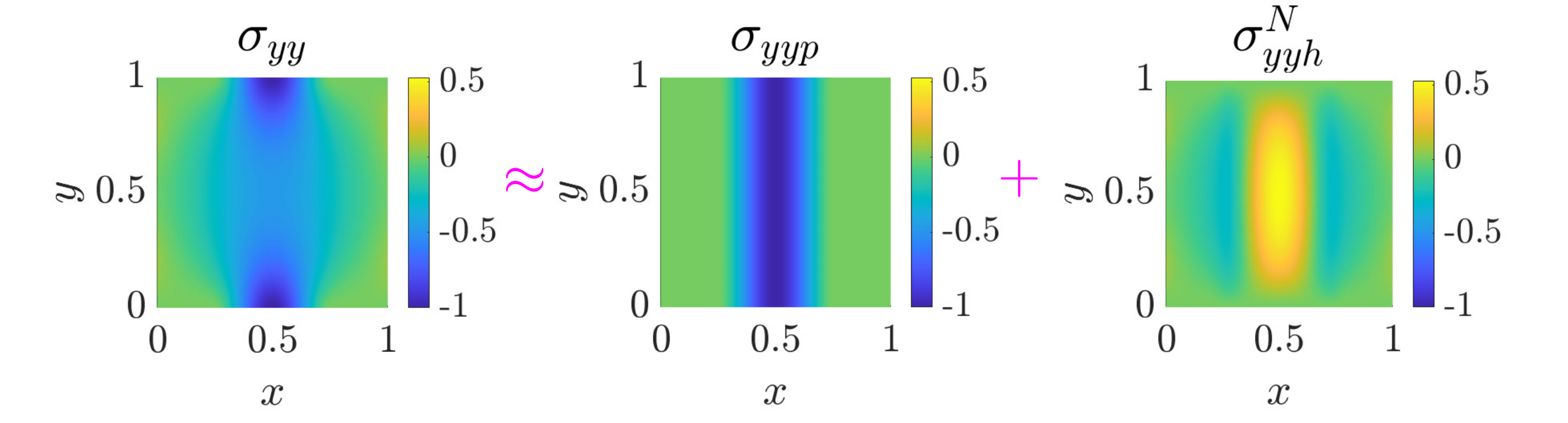}
\caption{$yy$ stress components for the case of continuous pressure in Example 2 (with $N=500$).}
\label{syyDC}
\end{figure}

\subsubsection{Example 3: Irregularly shaped body with uniformly pressurized circular hole} \label{irrHoleSec}
We now consider an irregularly shaped body having a circular hole of radius $r_a$ centered at the origin $O$, as shown in Figure \ref{RIHSketch}. The hole is subjected to uniform pressure $p=1$. 
\begin{figure}[t!]
\centering
\includegraphics[width=0.5\textwidth]{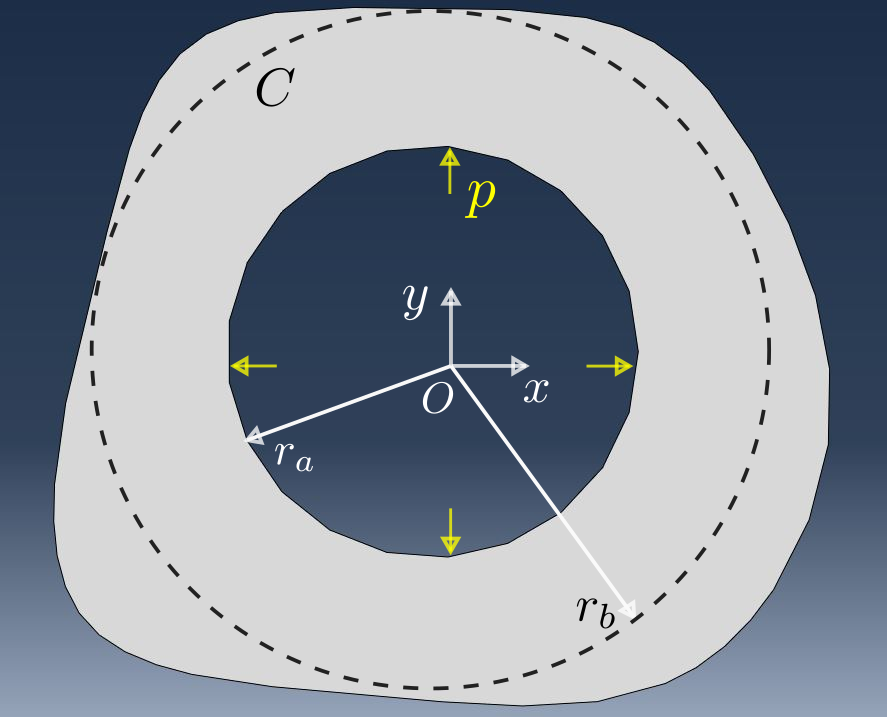}
\caption{An irregularly shaped body with a uniformly pressurized circular hole (Example 3).}
\label{RIHSketch}
\end{figure}

To find a sequence of approximate stresses, we first construct a $\bm{\sigma}_p$ in equilibrium with the loading. To that end, we consider approximately the biggest circle $C$ centered at $O$ completely inside the domain of the body (Figure \ref{RIHSketch}). Denoting the radius of $C$ as $r_b$, we construct a continuous axisymmetric candidate $\bm{\sigma}_p$ that is non-zero in $r_a \le r \le r_b$ and zero for $r>r_b$, i.e., $\bm{\sigma}_p$ satisfies
\begin{equation}
\begin{aligned}
\sigma_{rrp}'+ \frac{\sigma_{rrp}-\sigma_{\theta\theta p}}{r} &= 0,~~~r_a \le r \le r_b,\\
\sigma_{rrp}=\sigma_{\theta\theta p}&=0, ~~~ r> r_b, \\
\sigma_{rrp}&=-1 ~~~ \text{at}~~~ r=r_a,\\
\sigma_{\theta \theta p}|_{r_b^-} &= \sigma_{\theta \theta p}|_{r_b^+},
\end{aligned}
\label{spIH}
\end{equation}
where the last equation enforces continuity of $\sigma_{\theta \theta p}$, and consequently $\bm{\sigma}_p$, at the circle $C$. Note that we can also use the $\bm{\sigma}_p$ of Example 1 for $r\leq r_b$. However, due to the second of Eqs.~\ref{spIH}, it will be discontinuous at $C$, resulting in slower convergence.

To find a candidate $\bm{\sigma}_p$ satisfying Eqs.~\ref{spIH}, we assume the following Airy stress function for the region inside $C$:
$$ \psi_p =c_1 r + c_2 r^2 + c_3 r^3.$$
Substituting $\psi_p$ in Eq.~\ref{stressp}, and using the boundary conditions in Eqs.~\ref{spIH} to eliminate the free parameters, we finally obtain
\begin{equation*}
\begin{aligned}
\sigma_{rrp}=-\frac{\left(r_b-r\right)^2}{\left(r_b-r_a\right)^2}, ~~~~~ \sigma_{\theta \theta p}=\frac{2\left(r_b-r\right)}{\left(r_b-r_a\right)^2}, ~~~~~ \sigma_{r\theta p}=0, ~~~~~~ &r_a\leq r \leq r_b,\\
\sigma_{rrp}=\sigma_{\theta \theta p}=\sigma_{r\theta p}=0, ~~~~~~ &r> r_b.
\end{aligned}
\end{equation*}
Figure \ref{sigmapSk} shows $\bm{\sigma}_p$; we see that $\sigma_{rrp}$ is zero in the yellow region (left panel) and $\sigma_{\theta\theta p}$ is zero in the blue region (right panel). These regions lie outside $C$.
\begin{figure}[t!] 
	\centering
	\includegraphics [width=1\linewidth]{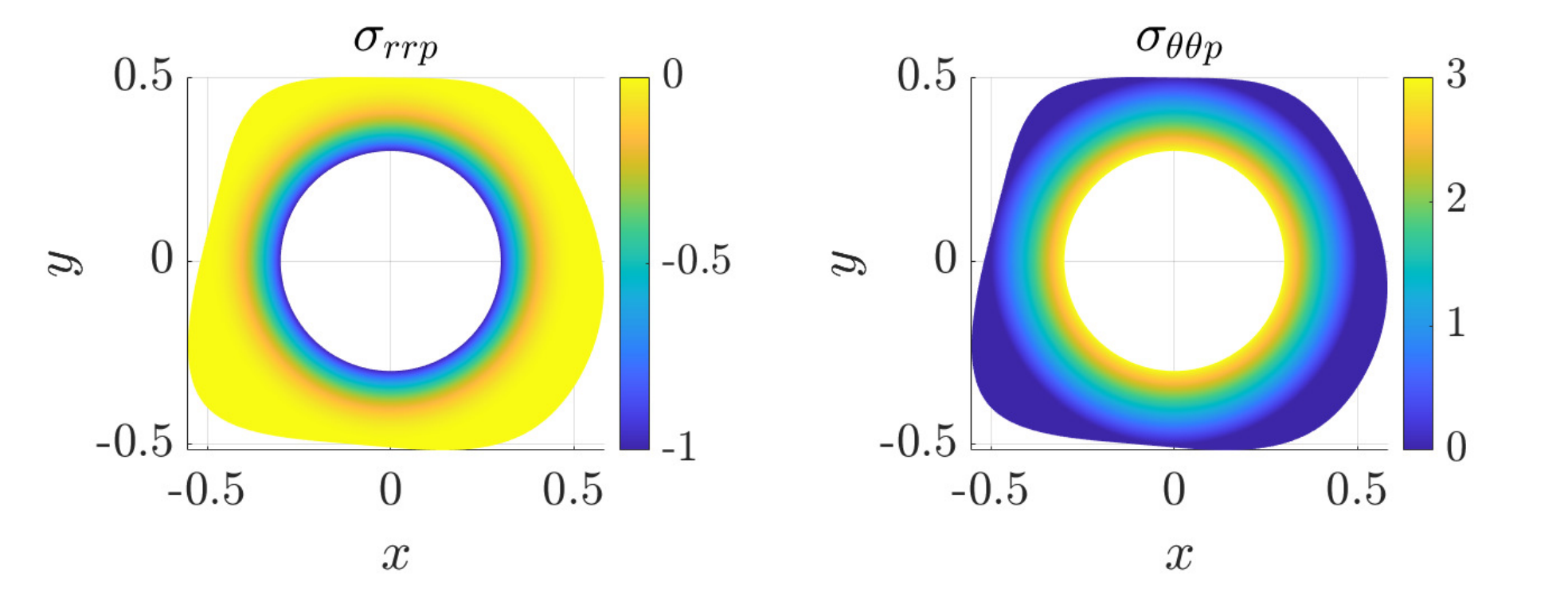}
\caption{A candidate $\bm{\sigma}_p$ for Example 3. $\sigma_{r\theta p}=0$ everywhere.}
\label{sigmapSk}
\end{figure}

\begin{figure}[t!]
	\centering
	\includegraphics [width=1\linewidth]{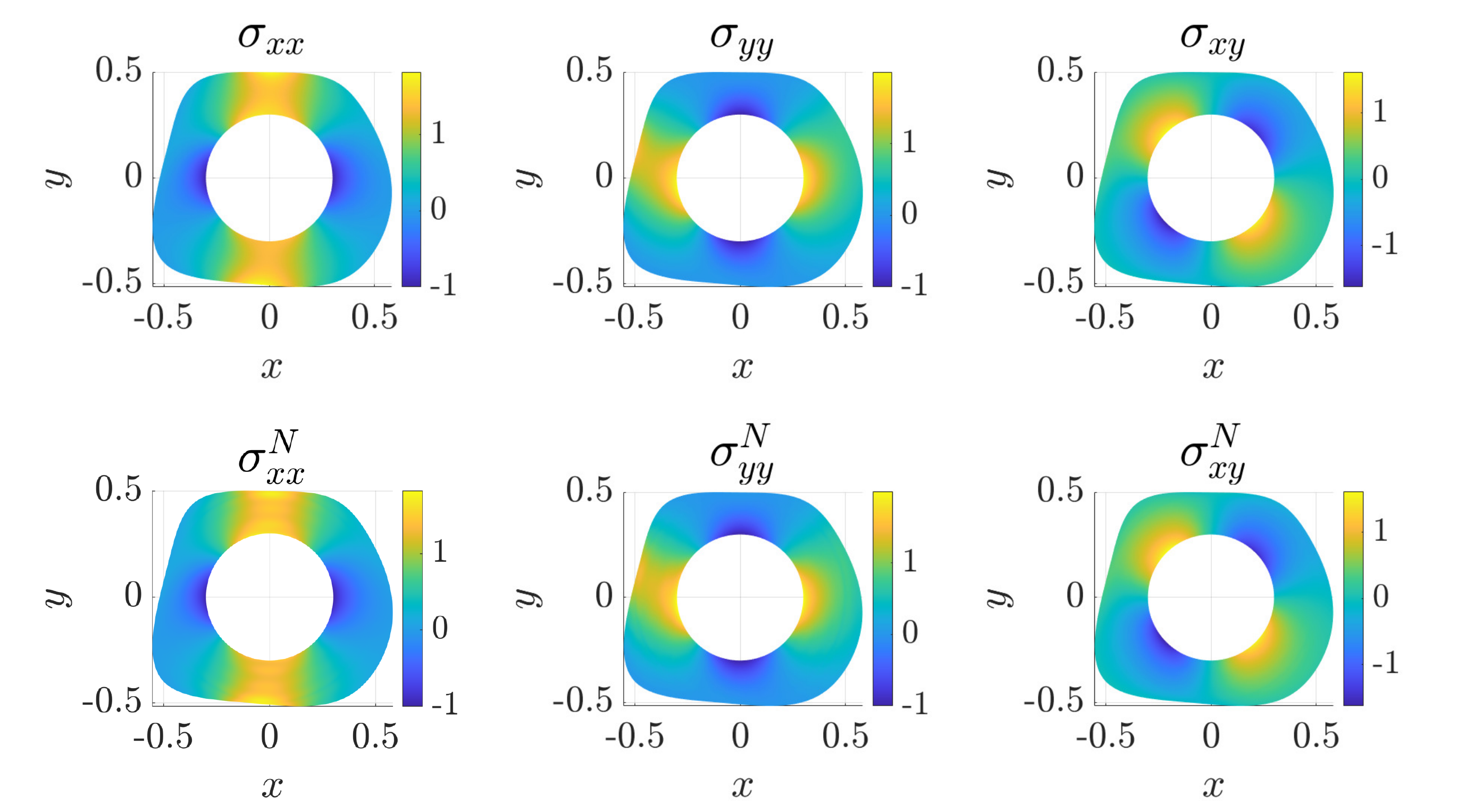}
\caption{True (top row) and approximate (bottom row) stresses in Example 3 (with $N=500$).}
\label{toolf}
\end{figure}

We now use the planar trace principle and substitute $\bm{\sigma}_p$ in Eq.~\ref{homo}. For each given $N$, we obtain an approximate stress $\bm{\sigma}^N$. The true stress (computed using Abaqus) and the approximate stress (with $N=500$) are plotted in Figure \ref{toolf}. The match is good. The approximation error $E_N$ is plotted in Figure \ref{ERIH}. We see in the log-log plot in the right panel that $E_N$ decays but not at a constant rate, and we obtain an average approximate decay rate by computing the slope of the black dotted line. We find that $E_N$ decays like $1/N^{0.75}$ for large $N$. In the left panel of Figure \ref{ERIH}, we have also plotted $\mathcal{E}_N$ in orange. A dashed horizontal orange line indicates the true strain energy $\mathcal{E}$; $\mathcal{E}_N$ is within 0.5\% of $\mathcal{E}$ with just five basis functions.
\begin{figure}[t!]
    \begin{subfigure}
	\centering
	\includegraphics [width=0.5\linewidth]{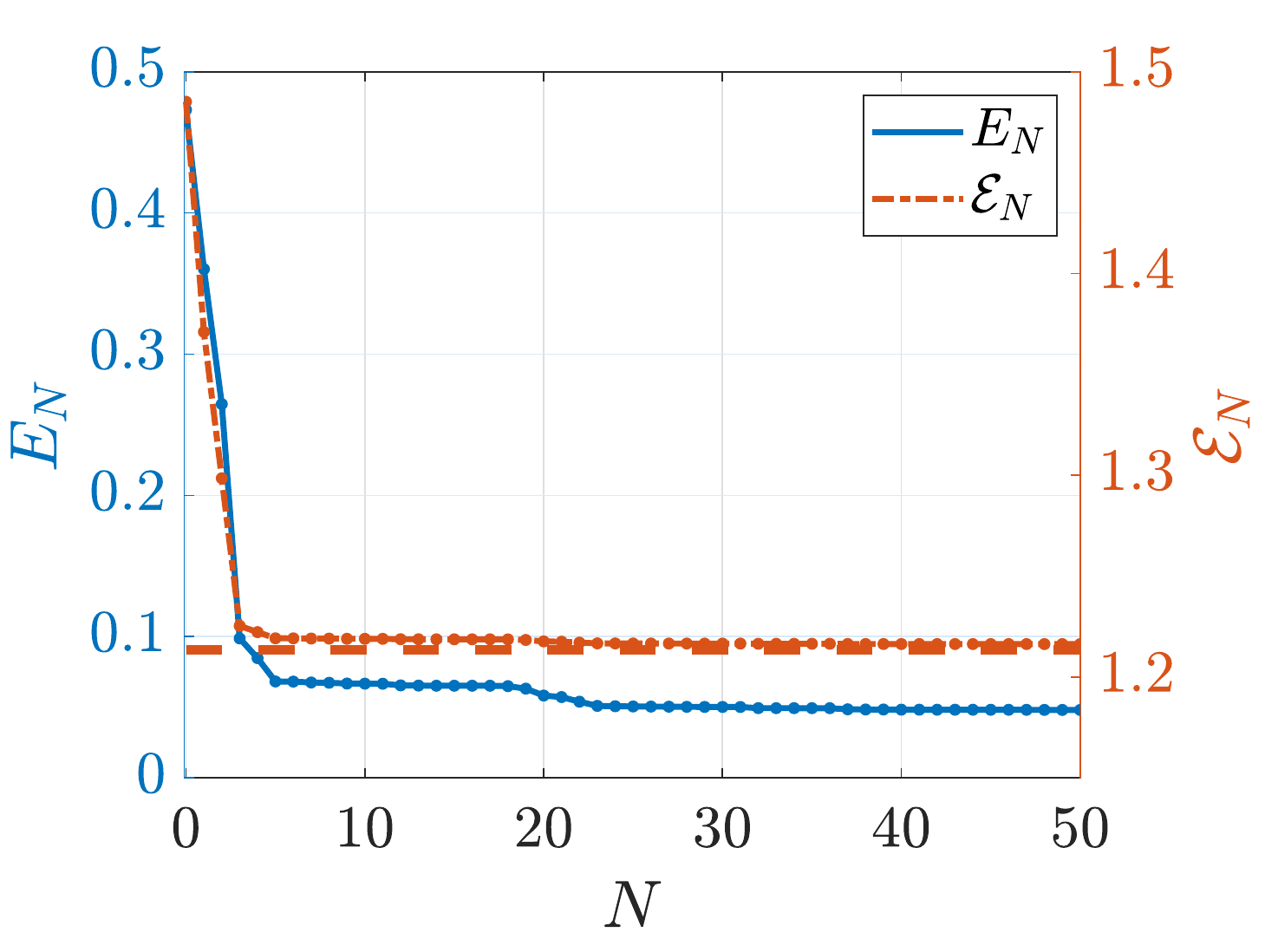}
    \end{subfigure}
    \begin{subfigure}
	\centering
	\includegraphics [width=0.5\linewidth]{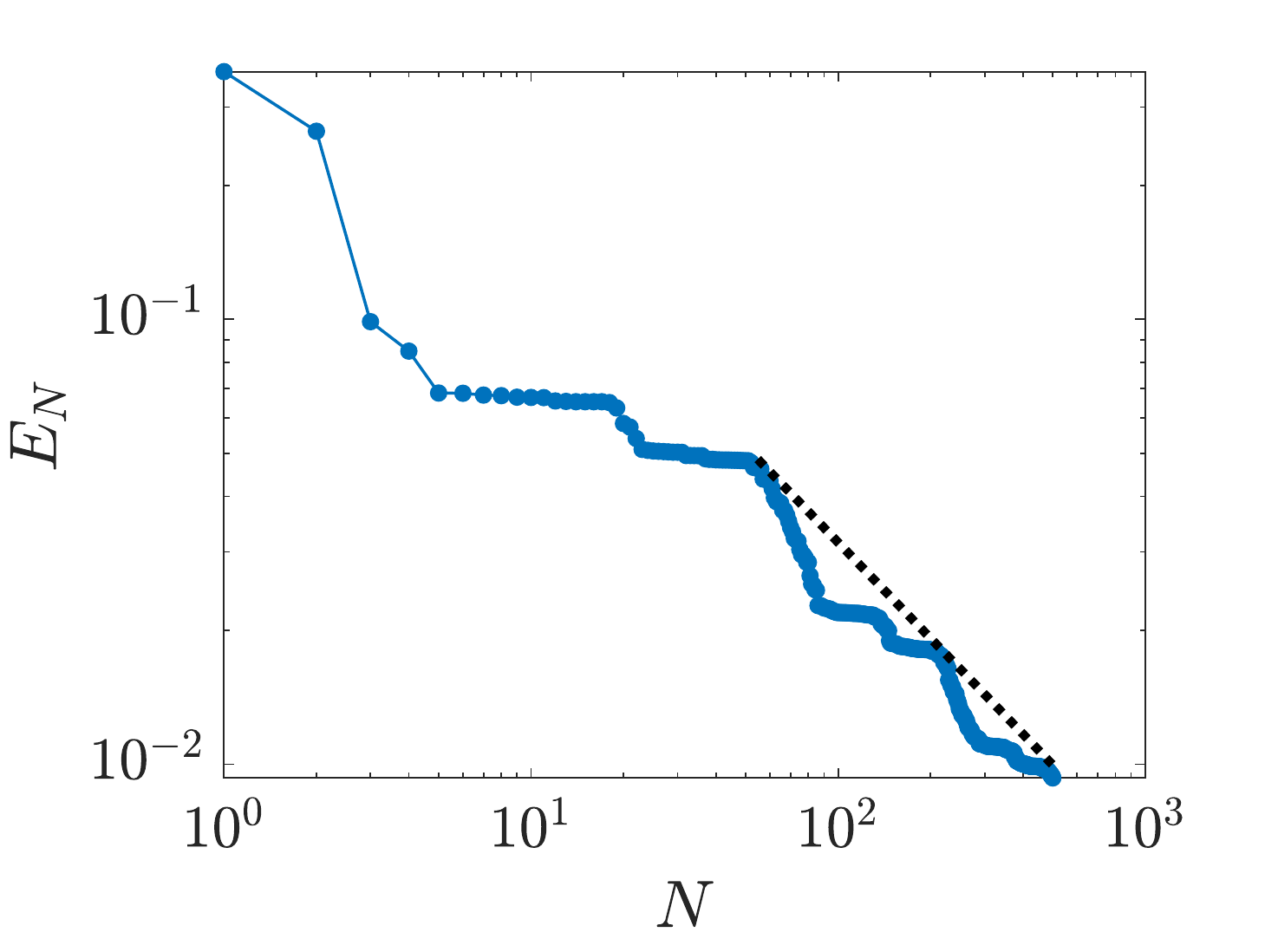}
    \end{subfigure}
\caption{Left: approximation error $E_N$ (in blue) and strain energy corresponding to approximate stress $\mathcal{E}_N$ (in orange) versus $N$ for Example 3; the dashed horizontal orange line corresponds to the strain energy of the true stress. Right: $E_N$ on a log-log scale.}
\label{ERIH}
\end{figure}

\subsubsection{Example 4: Square block resting on a table under gravity}
In the examples considered so far, we have ignored body forces. This example shows how we can incorporate a body force $\bm{b}$ in our formulation based on the planar trace principle. If there is a potential function $V$ such that $\bm{b}=-\nabla V$, then it can be shown along the lines of the proof presented in Section \ref{HomIso} that the true stress minimizes the functional 
\begin{equation}
\mathcal{T}_b=\int_{\Omega} \left(\bar{\sigma}-\frac{V}{1-\nu}\right)^2\,dA
\end{equation}
 over the set 
\begin{equation*}
\mathcal{Q}_b=\left\{\bm{\sigma} \big| ~ \bm{\sigma}\in \,\text{Sym},~\int_{\Omega} \bm{\sigma}\cdot \bm{\sigma}\,dA<\infty,~\text{div}\,\bm{\sigma}+\bm{b}=\bm{0}, ~ \left.\bm{\sigma n}\right|_{\partial \Omega}=\bm{\tau} \right\}.
\end{equation*}

Further, since the basis functions $\bm{\phi}_i$ are traction-free {\em and} have zero divergence, to find approximate stresses using our method, we must choose a $\bm{\sigma}_p$ that is in equilibrium with the given traction {\em and} body force. Mathematically,
\begin{equation*}
\begin{aligned}
\text{div}\, \bm{\sigma}_p +\bm{b}&= \bm{0} ~~~ \text{in} ~~~ \Omega, \\
\bm{\sigma}_p \bm{n} &= \bm{\tau} ~~~ \text{on} ~~~ \partial\Omega.
\end{aligned}
\end{equation*}

The above variational principle then gives an approximate stress
\begin{equation}
\bm{\sigma}^N=\bm{\sigma}_p+\sum_{j=1}^{N} \left\{- \int_{\Omega} \left(\bar{\sigma}_p-\frac{V}{1-\nu}\right) \bar{\phi}_j \, dA\right\} \bm{\phi}_j.
\label{homob}
\end{equation}
The proof of this result is presented in Appendix \ref{bf}.

For demonstration, we consider a square block resting on a rigid friction-less table in a constant gravitational field $-g\bm{e}_2$. The block is made of two different homogeneous materials with the same elastic constants $Y$ and $\nu$, but different mass densities $\rho_1$ and $\rho_2$, as shown in the left panel of Figure \ref{HB}. Note that the planar trace principle continues to hold because the elastic properties are homogeneous and isotropic.

The free-body diagram of the block is sketched in the right panel of Figure \ref{HB}. We assume that the normal force applied by the table is uniformly distributed over the lower edge of the block. The same boundary condition on the lower edge is used to compute the stress problem in Abaqus.
\begin{figure}[t!]
\begin{subfigure}
\centering
\includegraphics[width=0.5\textwidth]{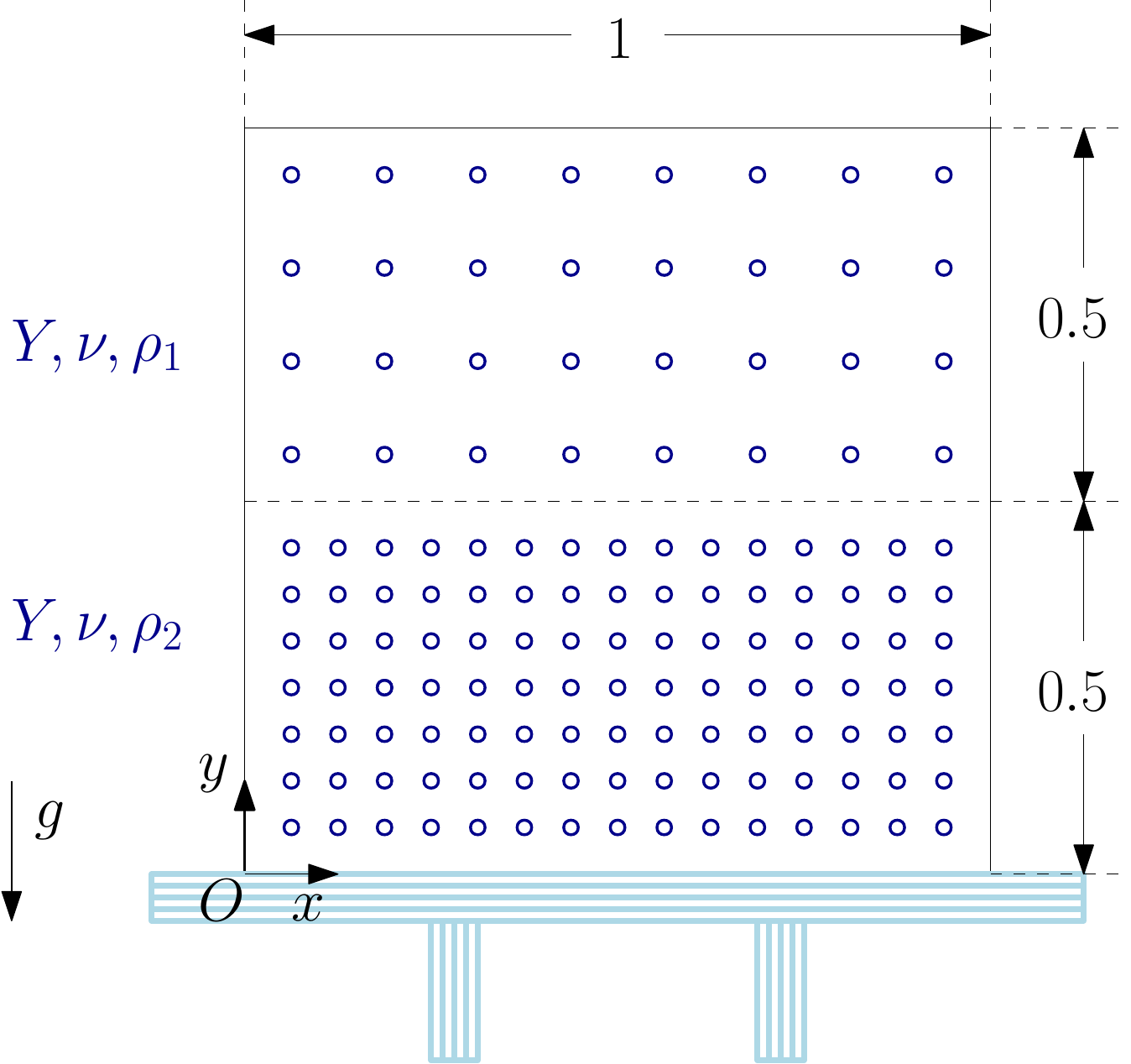}
\end{subfigure}
\begin{subfigure}
\centering
\hspace{1cm}\includegraphics[width=0.45\textwidth]{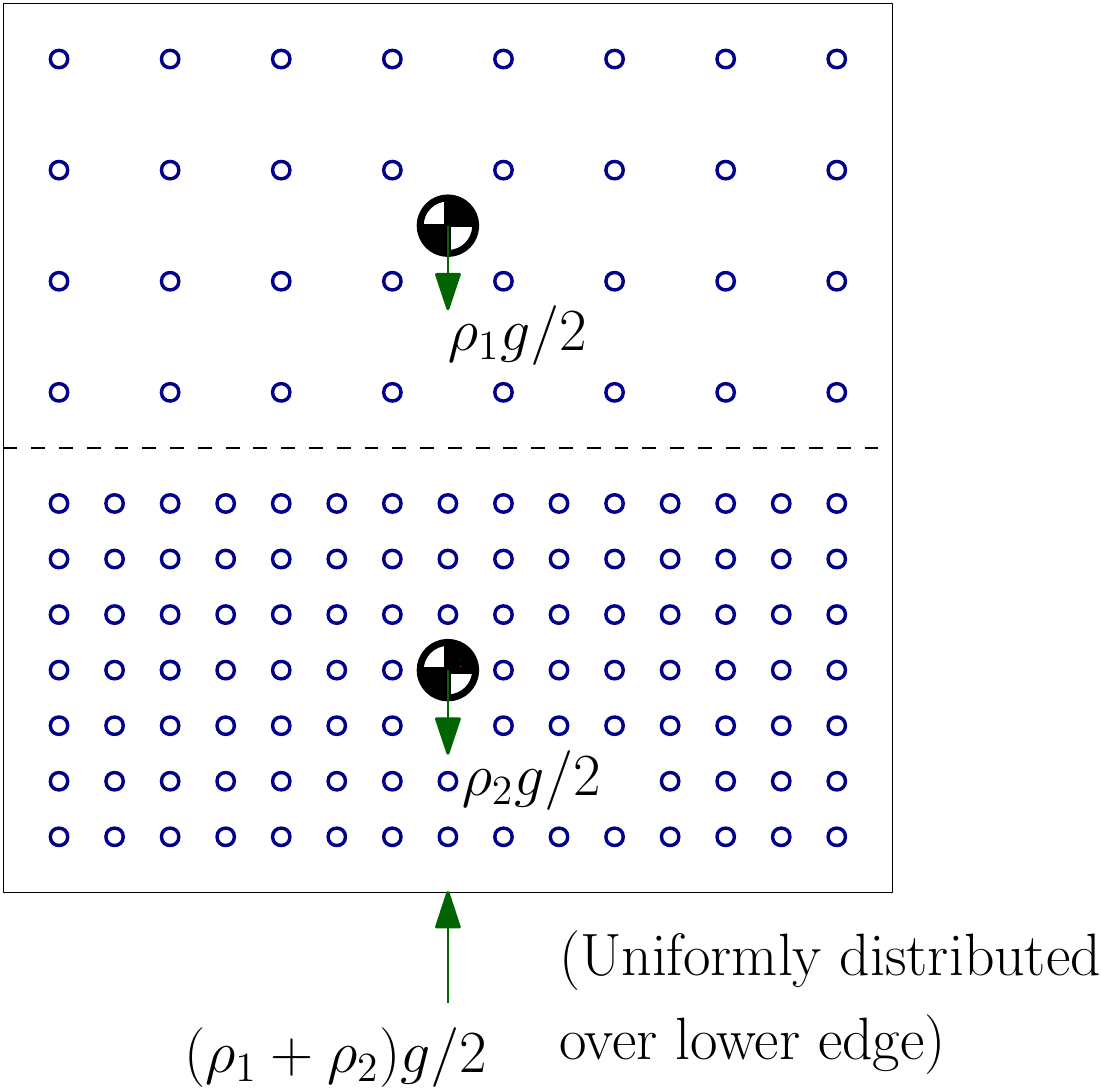}
\end{subfigure}
\caption{Left: schematic of the block resting on a rigid friction-less table in Example 4; right: free-body diagram of the block.}
\label{HB}
\end{figure}

The potential function $V$ corresponding to the gravitational force is
\begin{equation*}
V=-\int_{y=0}^1 b_y \, dy = \int_{y=0}^{1}\rho(y)g\,dy,
\end{equation*}
where $\rho(y)$ is $\rho_1$ for $0.5\leq y\leq 1$ and $\rho_2$ for $0\leq y< 0.5$; and $b_y$ is the $y$ component of $\bm{b}$. Substituting this $\rho(y)$ in the above equation, we find that
\begin{equation}
V=
\begin{cases}
\begin{aligned}
\left(\rho_2-\rho_1\right)g/2+ \rho_1 g y, ~~~~  & 0.5\leq y \leq  1,\\
\rho_2 g y, ~~~~  & 0\leq y <  0.5.
\end{aligned}
\end{cases}
\label{V}
\end{equation}

Let us now find a candidate $\bm{\sigma}_p$. Since $\bm{\sigma}_p$ is in equilibrium with the combined loading of contact force and gravity, it satisfies
\begin{equation}
\begin{aligned}
\text{div}\,\bm{\sigma}_p&=\rho(y)g\,\bm{e}_2=
\begin{cases}
\rho_1 g \bm{e}_2, ~~~~  0.5\leq y \leq  1,\\
\rho_2 g \bm{e}_2, ~~~~  0\leq y <  0.5,\\
\end{cases}\\
\sigma_{yyp}&=-g\, \int\limits_{y=0}^1 \rho(y)\, dy=-\frac{\left(\rho_1+\rho_2\right) g}{2} ~~~ \text{at}~~~ y=0.
\end{aligned}
\label{heavyBlockBVPsigmap}
\end{equation}
A straightforward candidate $\bm{\sigma}_p$ is
\begin{equation}
\sigma_{xxp}=\sigma_{xyp}=0, ~~~\sigma_{yyp}=
\begin{cases}
g\left(\rho_2 y -\frac{\rho_1+\rho_2}{2}\right),  ~~~~~~  0\leq y \leq  0.5,\\
g\rho_1 \left(y-1\right),  ~~~~~~  0.5< y \leq 1.
\end{cases}
\label{sigmapHeavyBlock}
\end{equation}
We choose $g=1, \rho_1=1$ and $\rho_2=3$, and substitute the above $\bm{\sigma}_p$ along with $V$ of Eq.~\ref{V} in Eq.~\ref{homob}. This way, we compute, for any given $N$, an approximate stress $\bm{\sigma}^N$.
\begin{figure}[t!] 
	\centering
	\includegraphics [width=1\linewidth]{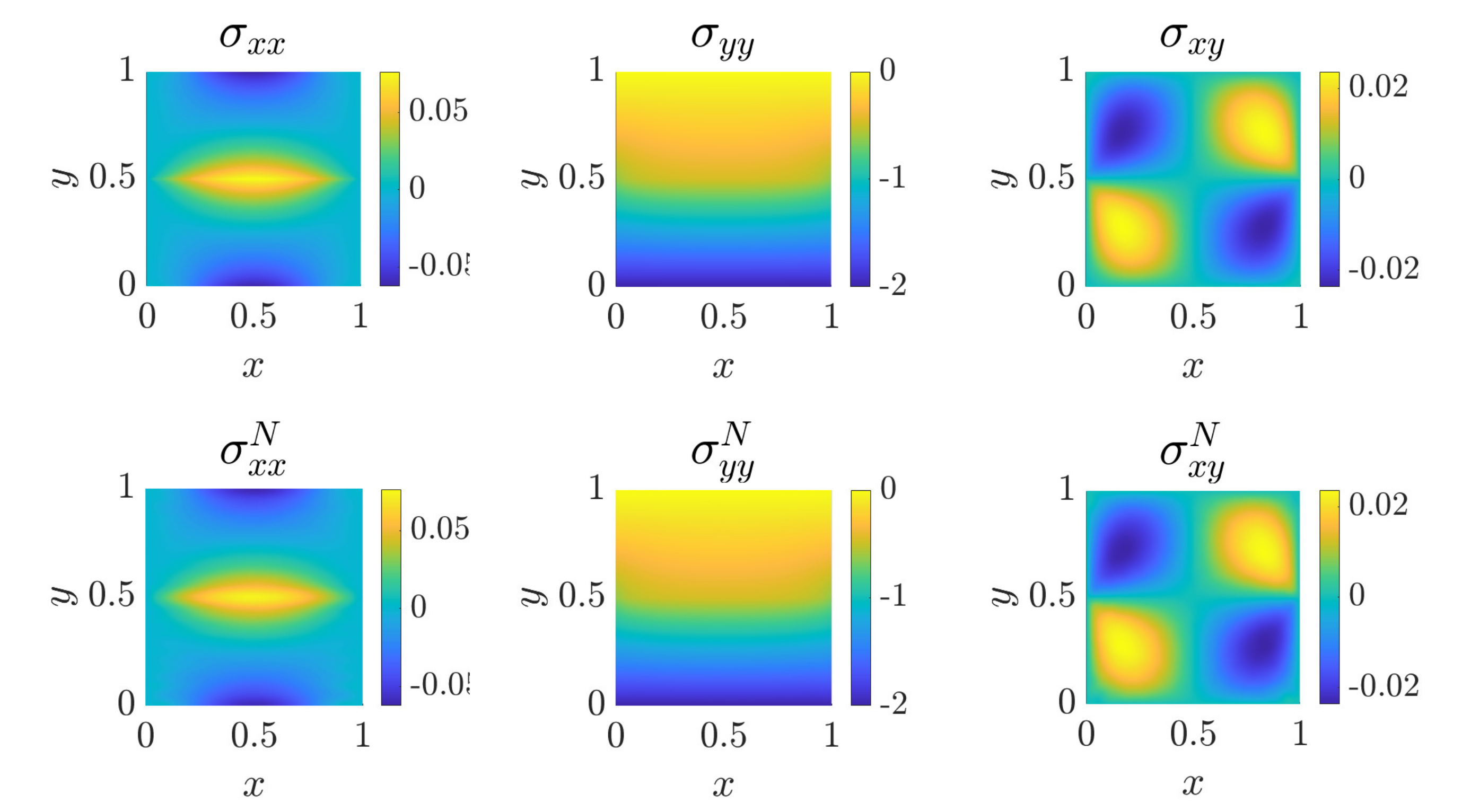}
\caption{True (top row) and approximate (bottom row) stresses in the square block resting on a table in Example 4 (with $N=500$).}
\label{HBS}
\end{figure}
We plot the true stress (obtained using Abaqus) and the approximate stress (with $N=500$) in Figure \ref{HBS}. The match is very good.

The approximation error $E_N$ is plotted in blue in Figure \ref{HBDenVar}, on a linear scale in the left panel and a log-log scale in the right panel. In the log-log plot, we fit the shown black dotted line to $E_N$ in the decaying regime, and its slope is computed to be $-0.58$. We also see in the left panel of Figure \ref{HBDenVar} that the approximate strain energies $\mathcal{E}_N$ (orange curve) converge rapidly to the true strain energy $\mathcal{E}$ indicated by the dashed horizontal orange line.

From the left of Figure \ref{HBDenVar}, we note that even for $N=0$, $E_N$ is only about 0.04. Similarly, we see from the $\mathcal{E}_N$ plot that the strain energy of $\bm{\sigma}_p$ (corresponding to $N=0$ in the figure) matches that of the true stress till the second decimal point. It indicates that $\bm{\sigma}_p$ is already close to the true stress $\bm{\sigma}$, and $\bm{\sigma}_h$ merely fine-tunes the approximation. This is seen clearly in Figure \ref{HBsplit}, where we have plotted the $yy$ components of $\bm{\sigma}$, $\bm{\sigma}_p$ and $\bm{\sigma}_h^N$ (for $N=500$). We see that $\sigma_{yyh}^N$ is imperceptible in comparison to $\sigma_{yy}$ and $\sigma_{yyp}$. In the inset on the top-right of the figure, we have plotted $\sigma_{yyh}^N$ with a re-scaled color bar to see its spatial variation clearly.

\begin{figure}[t!]
    \begin{subfigure}
	\centering
	\includegraphics [width=0.5\linewidth]{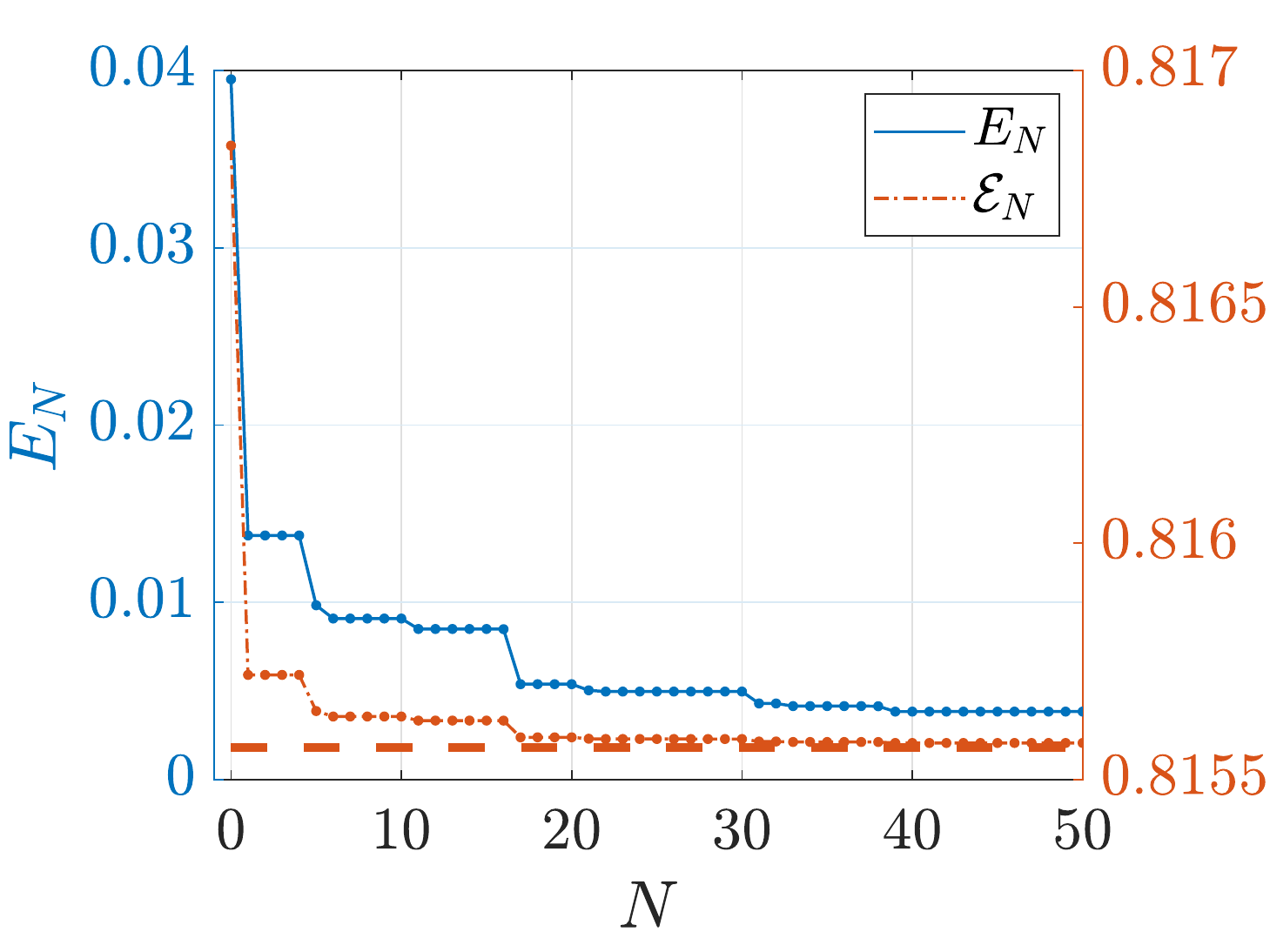}
    \end{subfigure}
    \begin{subfigure}
	\centering
	\includegraphics [width=0.5\linewidth]{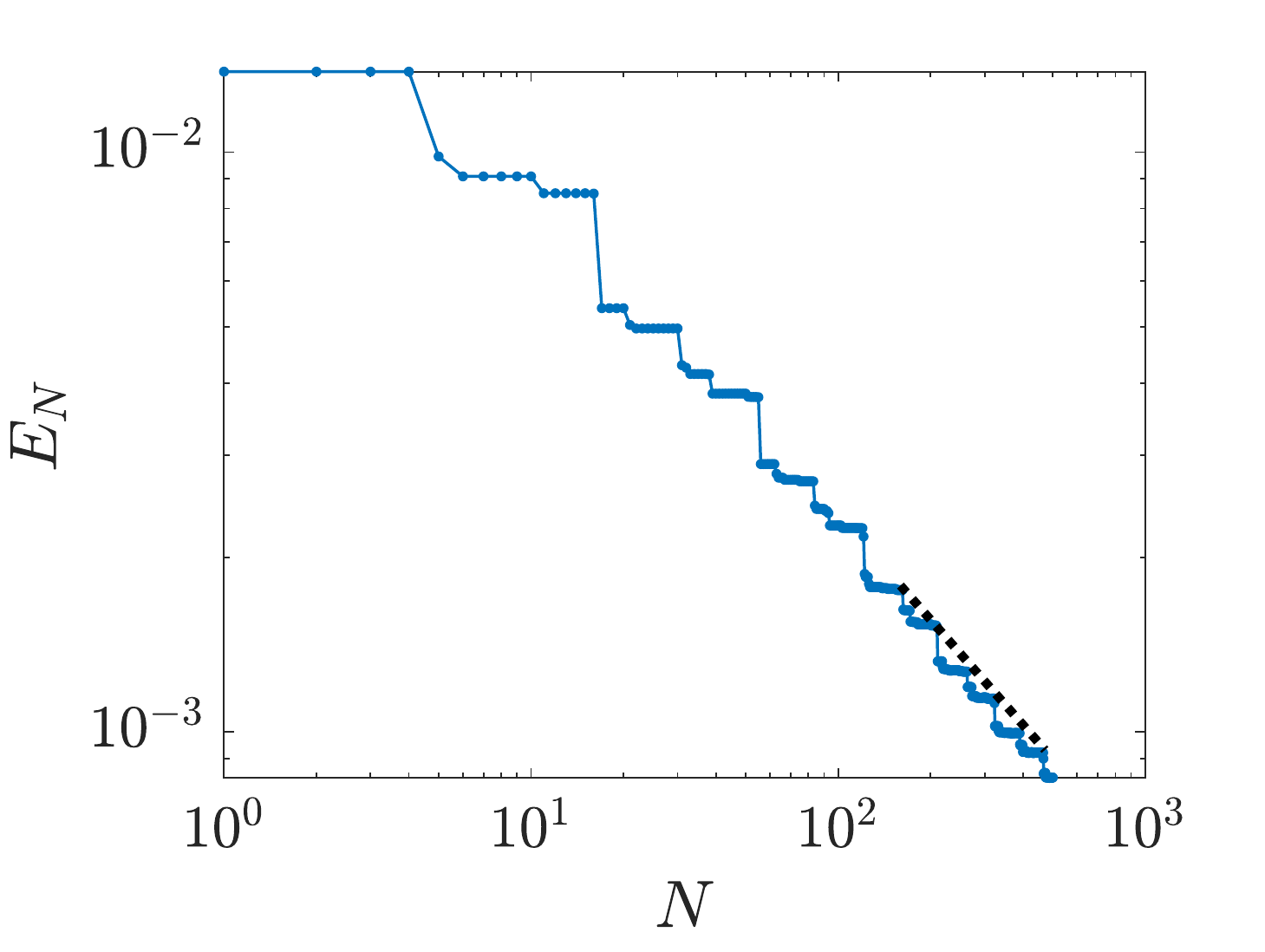}
    \end{subfigure}
\caption{Left: approximation error $E_N$ (in blue) and strain energy corresponding to approximate stress $\mathcal{E}_N$ (in orange) versus $N$ for Example 4; the dashed horizontal orange line corresponds to the strain energy of the true stress. Right: $E_N$ on a log-log scale.}
\label{HBDenVar}
\end{figure}
\begin{figure}[t!]
	\centering
	\includegraphics [width=1.05\linewidth]{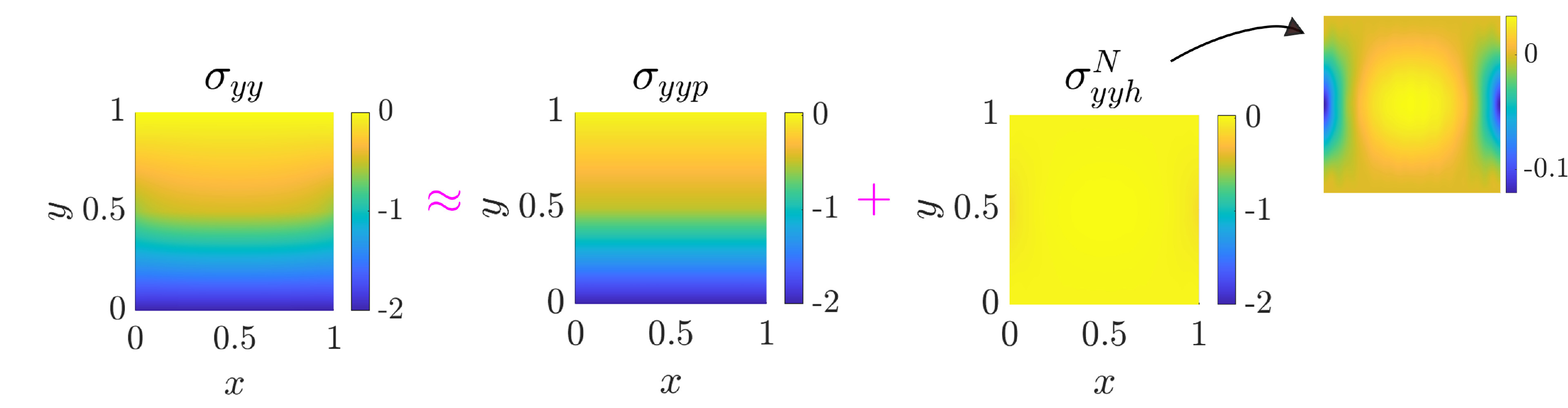}
\caption{$yy$ stress components of $\bm{\sigma}$, $\bm{\sigma}_p$ and $\bm{\sigma}_h^N$ (with $N=500$) for Example 4. Inset: $\bm{\sigma}_h^N$ with a re-scaled color bar.}
\label{HBsplit}
\end{figure}

A minor point is that, unlike the other stress problems studied, in this problem the traction on the bottom edge was not specified and was conveniently assumed to match the $\bm{\sigma}_p$ used.

\subsubsection{Example 5: Annulus with non-zero net force on the hole}
We showed in Section \ref{HomIso} that the planar trace principle does not hold if a multiply-connected body is subjected to a traction with non-zero net force on at least one internal hole. We also observed in Section \ref{kala} that the strain energy principle holds even for such cases. In this example and the following example (Example 6), we present a numerical demonstration of these observations. In the present example, we consider an annulus subjected to a traction with a chosen azimuthal wavenumber, and the problem becomes essentially 1D. In Example 6, we consider a non-axisymmetric multiply-connected body.

We consider a homogeneous isotropic elastic annulus centered at the origin with inner and outer radii $r_a=0.1$ and $r_b=0.3$, respectively. It is subjected to a traction
\begin{equation*}
\displaystyle
\bm{\tau}=
\begin{cases}
\displaystyle
-\cos{\theta} ~\bm{e}_r ~~~ \text{at} ~~~ r=r_a,\\
\left(\cos{\theta}\right)/3 ~\bm{e}_r ~~~ \text{at} ~~~ r=r_b.\\
\end{cases}
\end{equation*}
A schematic depicting this loading is shown in the left panel of Figure \ref{FnzAn}. It is clear from the figure that the net force on the hole has a non-zero horizontal component. 
\begin{figure}[t!]
    \begin{subfigure}
	\centering
	\includegraphics [width=0.5\linewidth]{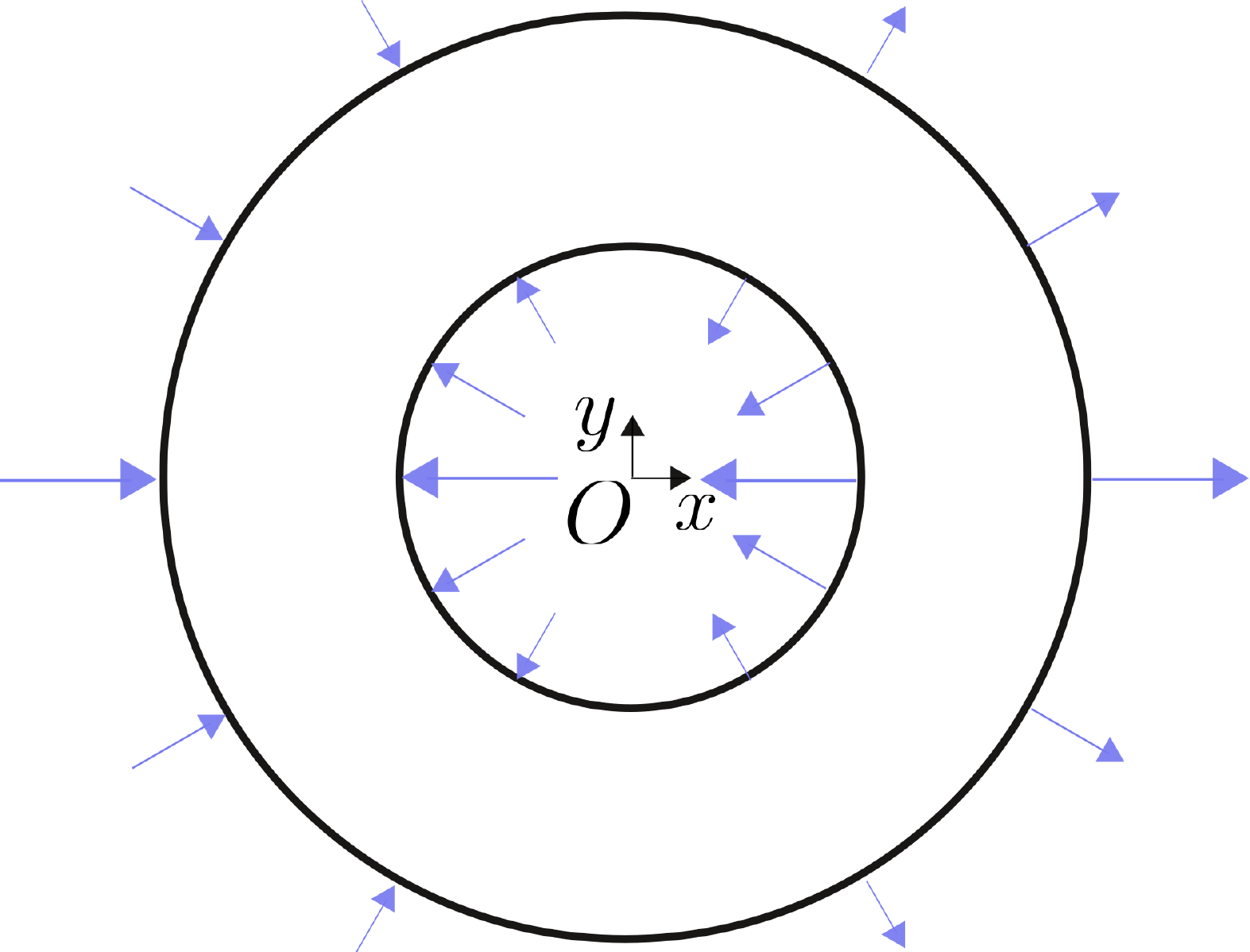}
    \end{subfigure}
    \begin{subfigure}
	\centering
	\includegraphics [width=0.5\linewidth]{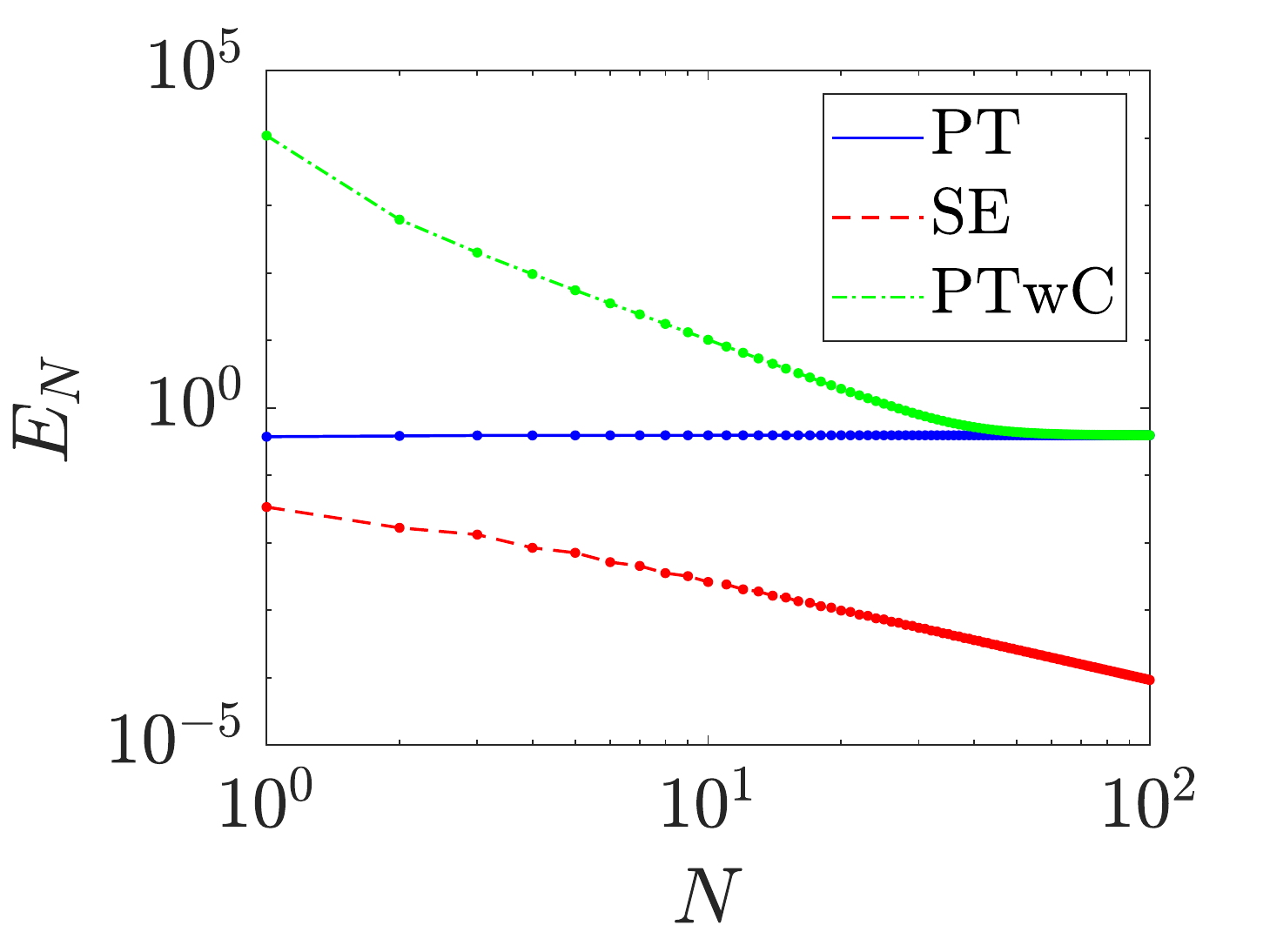}
    \end{subfigure}
\caption{Left: Schematic of an annulus subjected to traction (depicted through blue arrows) with non-zero net force on the hole in Example 5. Right: approximation error $E_N$ with the planar trace (PT), strain energy (SE) and planar trace with enforced integral constraint (PTwC) approaches.}
\label{FnzAn}
\end{figure}

In view of the given traction, we assume a stress solution of the form
$$ \sigma_{rr}=\sigma_{rr}(r)\cos{\theta}, ~~~\sigma_{\theta\theta}=\sigma_{\theta\theta}(r)\cos{\theta}, ~~~\sigma_{r\theta}=\sigma_{r\theta}(r)\sin{\theta}.$$ 
Substituting this in the equilibrium equation $\text{div}\, \bm{\sigma}=\bm{0}$ gives 
\begin{equation}
\begin{aligned}
\sigma_{rr}' + \frac{\sigma_{r\theta}+\sigma_{rr}-\sigma_{\theta\theta}}{r}&=0,\\
\sigma_{r\theta}'+\frac{2\sigma_{r\theta}-\sigma_{\theta\theta}}{r}&=0,
\end{aligned}
\label{eqap}
\end{equation}
where we have suppressed the $r$-dependence. The local compatibility equation $\Delta \bar{\sigma}=0$ gives
\begin{equation}
\sigma_{rr}'' + \sigma_{\theta\theta}'' + \frac{\sigma_{rr}'+\sigma_{\theta\theta}'}{r}-\frac{\sigma_{rr}+\sigma_{\theta\theta}}{r^2}=0.
\label{coap}
\end{equation}
Application of the C\'esaro integral condition (Eq.~\ref{loopapp}) at $r=r_a$ gives
\begin{equation}
2\epsilon_{r\theta}(r_a)+\epsilon_{rr}(r_a)-r_a \epsilon_{\theta\theta}'(r_a)=0,
\label{ceap}
\end{equation}
where
\begin{equation}
\begin{split}
\epsilon_{rr}(r_a)=\frac{1+\nu}{Y} \left\{(1-\nu)\sigma_{rr}(r_a)-\nu \sigma_{\theta\theta}(r_a)\right\},\\
\epsilon_{\theta\theta}(r_a)=\frac{1+\nu}{Y} \left\{(1-\nu)\sigma_{\theta\theta}(r_a)-\nu \sigma_{rr}(r_a)\right\},\\
\epsilon_{r\theta}(r_a)=\frac{1+\nu}{Y}\sigma_{r\theta}(r_a).
\end{split}
\nonumber
\end{equation}
Finally, the traction boundary conditions are
\begin{equation}
\begin{aligned}
\sigma_{rr}=1 ~~~ & \text{and} ~~~ \sigma_{r\theta}=0 ~~~ \text{at} ~~~ r=r_a,\\
\sigma_{rr}=1/3 ~~~ & \text{and} ~~~ \sigma_{r\theta}=0 ~~~ \text{at} ~~~ r=r_b.
\end{aligned}
\label{trap}
\end{equation}
We solve for the true stress $\bm{\sigma}$ using the Eqs.~\ref{eqap}--\ref{trap}.

To find an approximate stress using the planar trace and strain energy principles, we begin by constructing a $\bm{\sigma}_p$. Unlike Examples 1 and 3, we cannot construct a $\bm{\sigma}_p$ from an Airy stress function $\psi_p$ here. The reason is that no Airy stress function {\em exists} since the hole is subjected to a non-zero net force \cite{gurtin}. Instead, we adopt a trick from \cite{tiwari} and begin by assuming
$$ \sigma_{\theta\theta p}=c_0 + c_1 r,$$
where $c_0$ and $c_1$ are free parameters. We substitute the above into equilibrium Eqs.~\ref{eqap} and integrate the resulting equations to find $\sigma_{rrp}$ and $\sigma_{r\theta p}$ in terms of $c_0$ and $c_1$ while retaining two additional integration constants obtained {\em en route}. These four constants are chosen to satisfy the boundary conditions in Eqs.~\ref{ceap} and \ref{trap}, and we finally obtain
\begin{equation}
\begin{aligned}
\sigma_{rrp}&=\frac{r}{3}-\frac{13}{120}+\frac{0.003}{4r^2} + \frac{0.1}{r},\\
\sigma_{\theta\theta p} &=- \frac{13}{60}+r,\\
\sigma_{r\theta p}&=\frac{r}{3}-\frac{13}{120}+\frac{0.003}{4r^2}.
\end{aligned}
\label{sigmapap}
\end{equation}

Given the azimuthal variation of the traction, we consider only a subset of the basis functions $\bm{\phi}_i$ having the form\footnote{Basis functions with all other azimuthal wavenumbers are discarded. We also discard the basis functions with azimuthal wavenumber $m=1$ having the form $$ -\phi_{rri}(r) \sin{\theta}\, \bm{e}_r \otimes \bm{e}_r - \phi_{\theta\theta i}(r) \sin{\theta}\, \bm{e}_{\theta} \otimes \bm{e}_{\theta} + \phi_{r\theta i}(r) \cos{\theta}\, \bm{e}_r \otimes \bm{e}_{\theta} + \phi_{r\theta i}(r) \cos{\theta} \,\bm{e}_{\theta} \otimes \bm{e}_{r}.$$}
\begin{equation}
\begin{split}
\bm{\phi}_i = \phi_{rri}(r) \cos{\theta} \,\bm{e}_r \otimes \bm{e}_r \,+\, \phi_{\theta\theta i}(r) \cos{\theta}\, \bm{e}_{\theta} \otimes \bm{e}_{\theta} \\
+\, \phi_{r\theta i}(r) \sin{\theta}\, \bm{e}_r \otimes \bm{e}_{\theta} + \phi_{r\theta i}(r) \sin{\theta}\, \bm{e}_{\theta} \otimes \bm{e}_{r}.
\end{split}
\nonumber
\end{equation}
Substituting these basis functions and $\bm{\sigma}_p$ (Eq.~\ref{sigmapap}) into Eqs.~\ref{hetero} and \ref{homo}, we obtain approximate stresses using the strain energy and planar trace principles, respectively.

The approximation errors $E_N$ obtained using these two principles are plotted in the right panel of Figure \ref{FnzAn} in blue and red, respectively. We see that $E_N$ corresponding to the planar trace principle settles at a non-zero value, while that corresponding to the strain energy principle decays steadily with $N$. That the trace energy principle does not work is expected from our theory since the net force on the hole is non-zero. Accordingly, the global compatibility of strain is not satisfied (Theorem \ref{PTP}).

It is important to note that applying the planar trace principle with the C\'esaro integral condition (Eq.~\ref{loopapp}) on the inner boundary $r=r_a$ as an additional constraint does not give the correct solution either. The corresponding approximation error, plotted in green in the right panel of Figure \ref{FnzAn}, settles at a non-zero value. The applicability of the planar trace principle to problems dealing with multiply-connected bodies that have at least one hole subjected to non-zero net force may be clarified with future work.

\subsubsection{Example 6: A multiply-connected body with a more general loading}
In this example, we consider a homogeneous isotropic body that is multiply-connected and non-axisymmetric. It is subjected to loading of the following three types: (1) non-zero net force and zero net moment (about the center $O$; see Figure \ref{FM}) on the hole, (2) zero net force and zero net moment on the hole, and (3) zero net force and non-zero net moment on the hole. These cases are depicted in the top-right, bottom-left and bottom-right panels of Figure \ref{FM}.
\begin{figure}[t!]
 \begin{subfigure}
	\centering
	\includegraphics [width=0.5\linewidth]{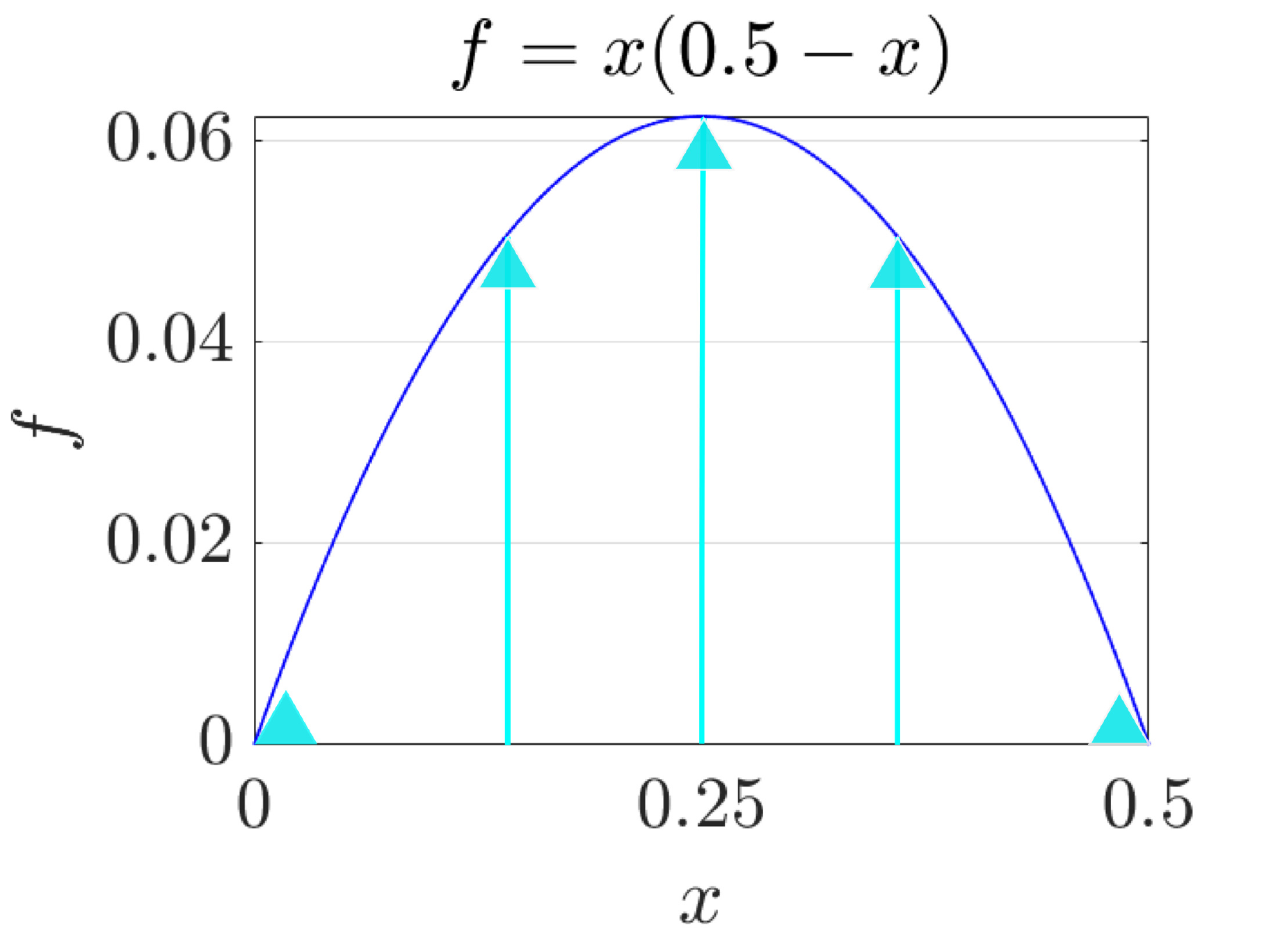}
    \end{subfigure}
    \begin{subfigure}
	\centering
	\includegraphics [width=0.5\linewidth]{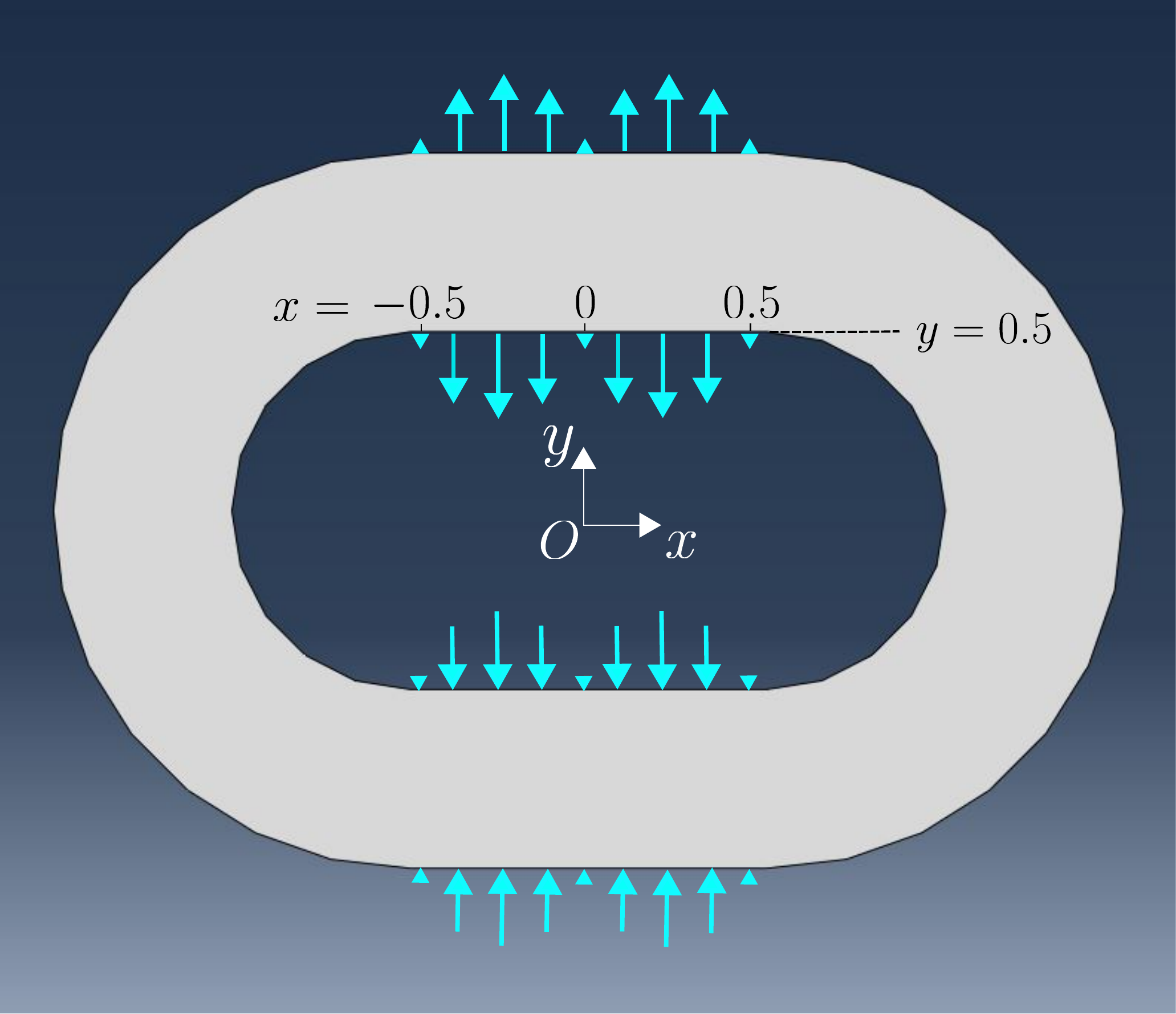}
    \end{subfigure}
    \begin{subfigure}
	\centering
	\includegraphics [width=0.5\linewidth]{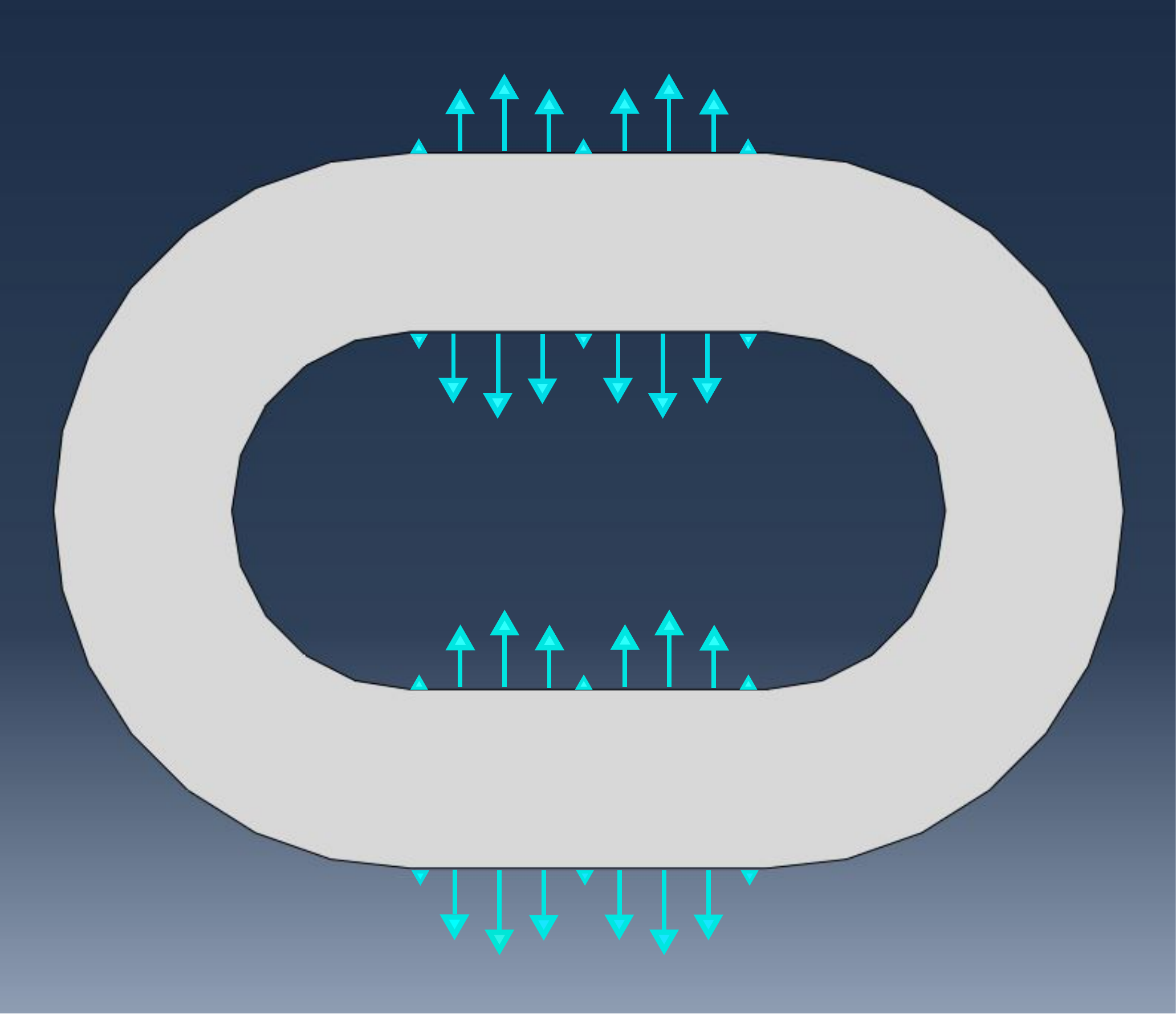}
    \end{subfigure}
    \begin{subfigure}
	\centering
	\includegraphics [width=0.5\linewidth]{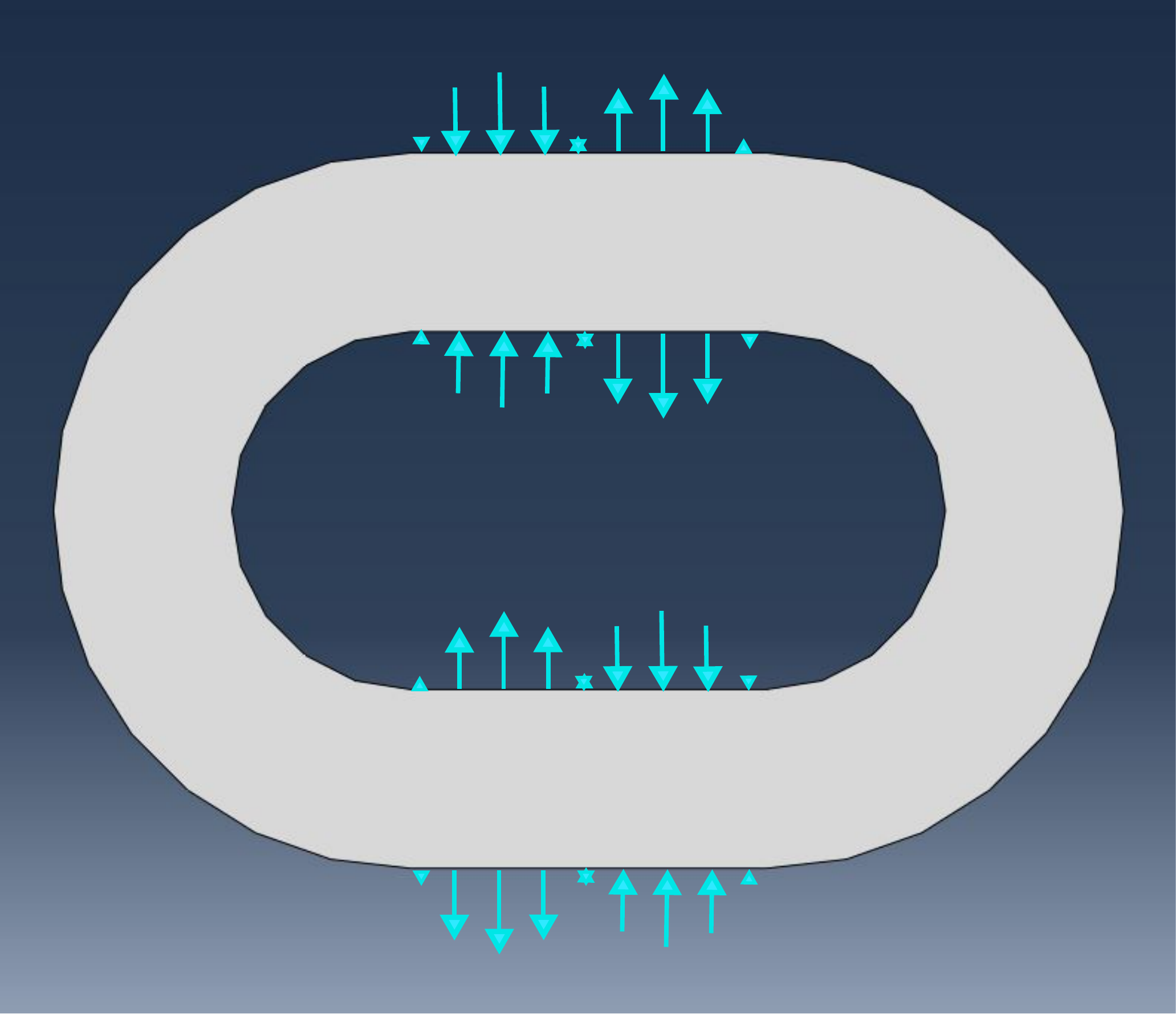}
    \end{subfigure}
\caption{Traction distributions in Example 6. Top-left: local distribution of the magnitude of the $y$ component of traction. Top-right: traction distribution corresponding to non-zero net force and zero net moment on the hole. Bottom-left: traction distribution corresponding to zero net force and zero net moment on the hole. Bottom-right: traction distribution corresponding to zero net force and non-zero net moment on the hole.}
\label{FM}
\end{figure}

In each case, we have assumed a traction distribution that is continuous and has a continuous slope. For instance, for case (1), the traction distribution $\bm{T}(x,y)$ on the upper outer boundary for the portion $0\leq x \leq 0.5$ is
\begin{equation*}
T_x=0, ~~~T_y=f=x(0.5-x).
\end{equation*}
The parabolic function $f=x(x-0.5)$ defined on $0\leq x \leq 0.5$ is shown in the top-left panel of Figure \ref{FM}. The traction distributions on the other portions of the hole surface and the outer boundary follow the same locally parabolic distribution, with their directions indicated in Figure \ref{FM}.  

We use both the strain energy principle (Eq.~\ref{hetero}) and the planar trace principle (Eq.~\ref{homo}) to find approximate stresses $\bm{\sigma}^N$. For $\bm{\sigma}_p$, we use the stress field corresponding to stress problems with the same loading but an {\em orthotropic} elastic body with arbitrarily chosen elastic constants $Y_x=5$, $Y_y=1$, $\nu_{xy}=0.2$, $G_{xy}=0.2$. 

We plot the approximation errors $E_N$ for the three problems (labeled as `1', `2' and `3', respectively) on linear and log-log scales in Figure \ref{FMEn}.  As per the theory, $E_N$ for the case with non-zero net force on the hole with the planar trace principle (curve `PT1' in the figure) should saturate at a non-zero value, and the numerics confirm that: the $E_N$ for all the cases seem to be decaying to zero except for that corresponding to PT1.
\begin{figure}[t!]
    \begin{subfigure}
	\centering
	\includegraphics [width=0.5\linewidth]{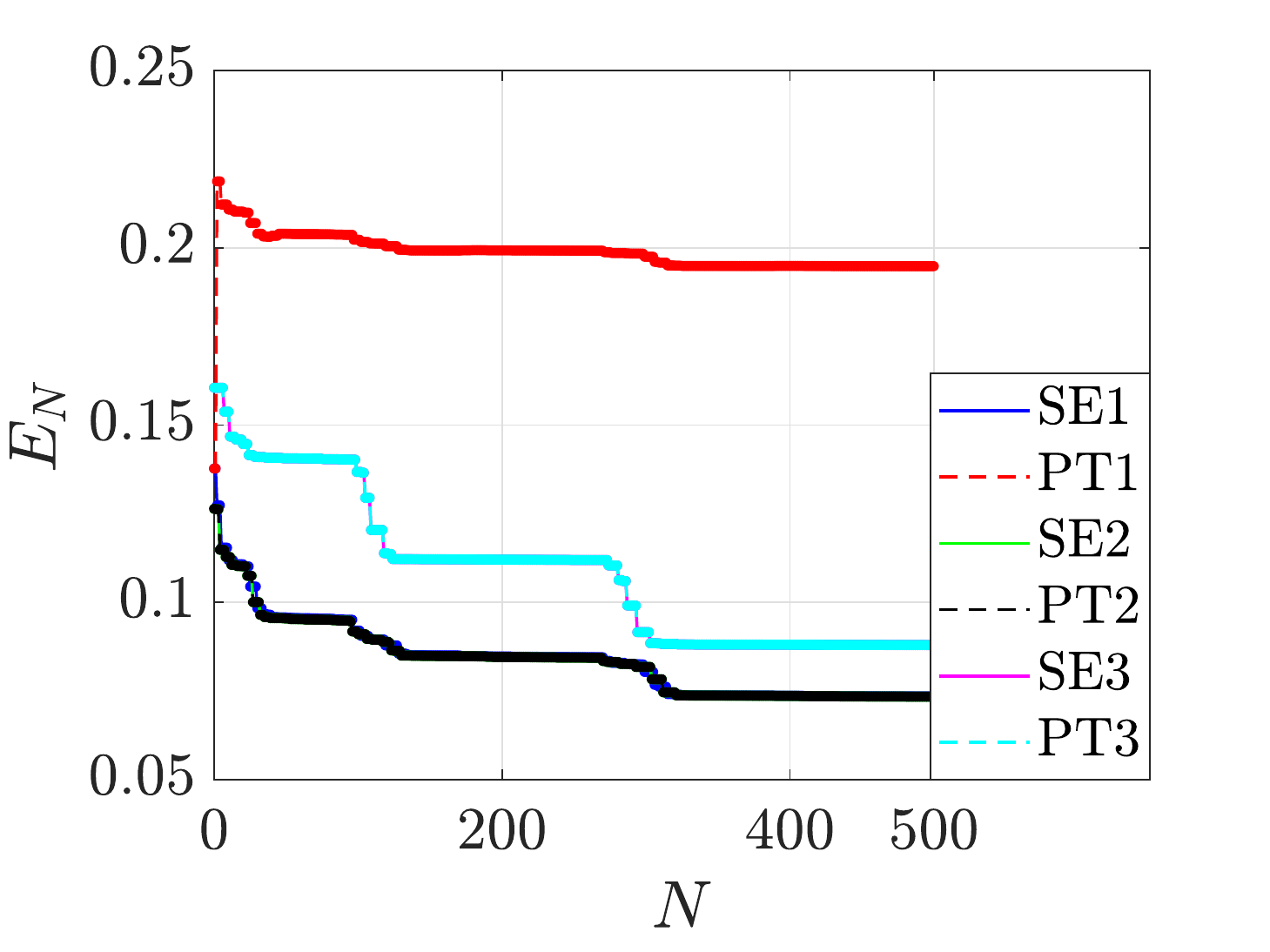}
    \end{subfigure}
    \begin{subfigure}
	\centering
	\includegraphics [width=0.5\linewidth]{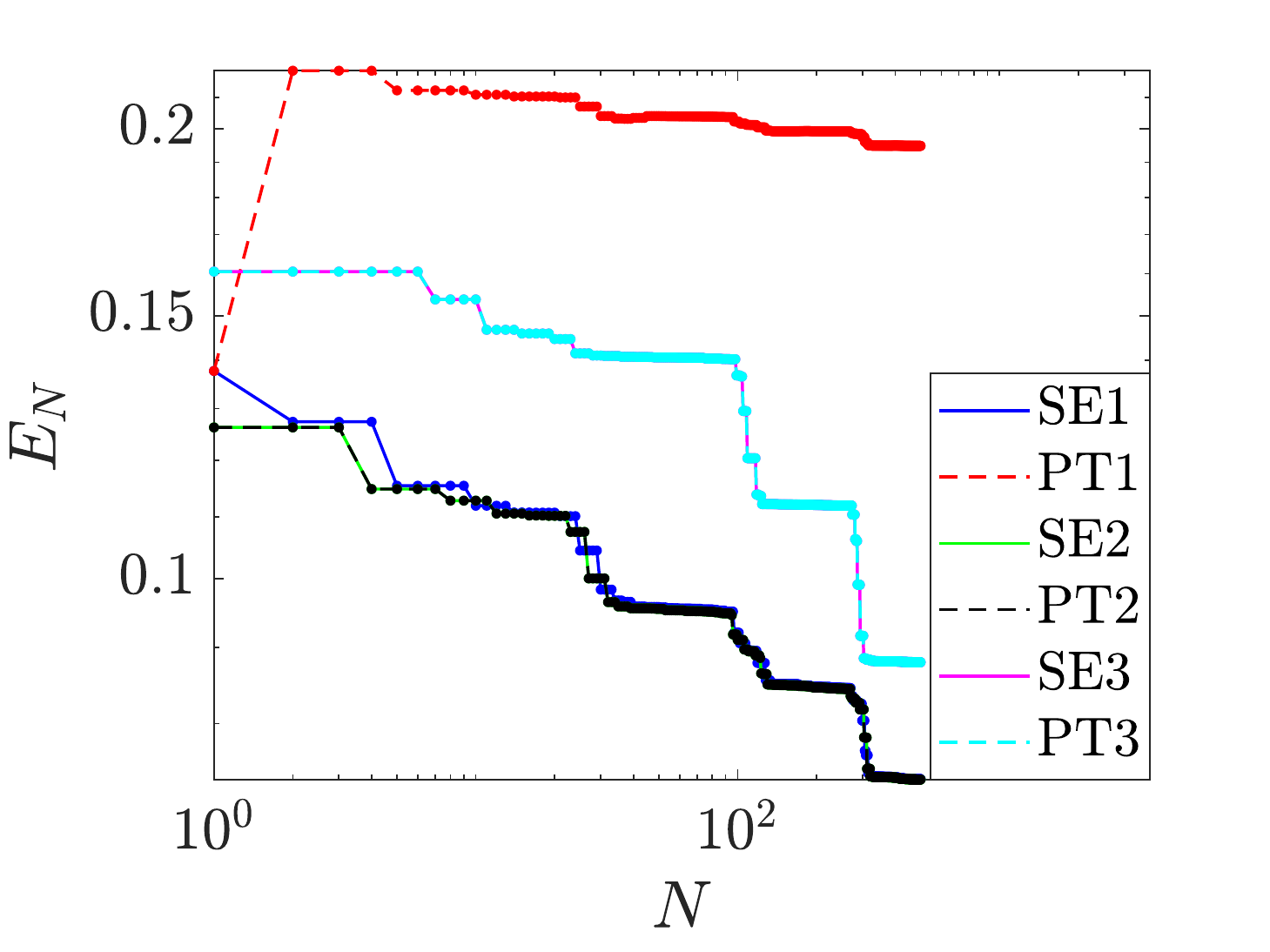}
    \end{subfigure}
\caption{$E_{N}$ in Example 6 on a linear scale (left panel) and a log-log scale (right panel) for the problems with (1) non-zero net force and zero net moment, (2) zero net force and zero net moment, and (3) zero net force and non-zero net moment on the hole, with the strain energy (SE) and planar trace (PT) principles. In the legends, `SE1' denotes $E_N$ for case 1 with the strain energy principle, and so on.}
\label{FMEn}
\end{figure}

This concludes our presentation of examples with homogeneous isotropic bodies. In the next section, we will consider general linear elastic bodies.

\subsection{General linear elastic bodies} \label{examples2}
In this section, we consider planar linear elastic bodies that need not be homogeneous and isotropic. Accordingly, we use the strain energy principle to solve stress problems. We will consider two examples: a square block with spatially varying Young's modulus and an arbitrarily shaped body obeying orthotropic elasticity containing an internally pressurized hole. Both examples correspond to the plane-strain case.

\subsubsection{Example 7: Inhomogeneous block with uniformly pressurized horizontal edges}
We consider a square block made of two different homogeneous materials with Young's moduli $Y_1=1$ and $Y_2=3$, and Poisson's ratio $\nu=0.33$, as shown in the left panel of Figure \ref{JellySteelSketch}. The block is subjected to pressure $p=1$ on the top and bottom edges. 
\begin{figure}[t!]
\begin{subfigure}
\centering
\includegraphics[width=0.4\textwidth]{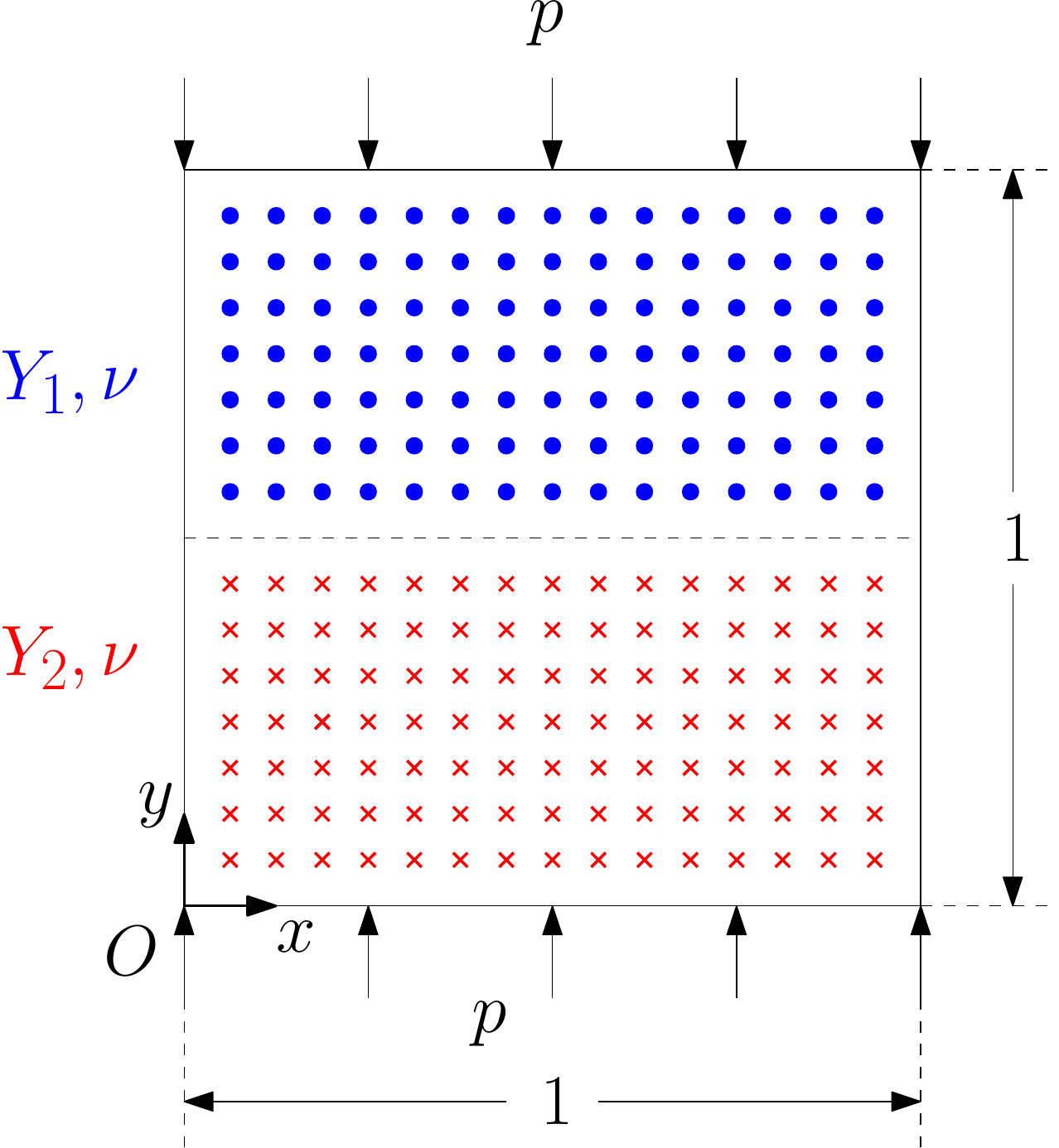}
\end{subfigure}
\begin{subfigure}
\centering
\includegraphics[width=0.52\textwidth]{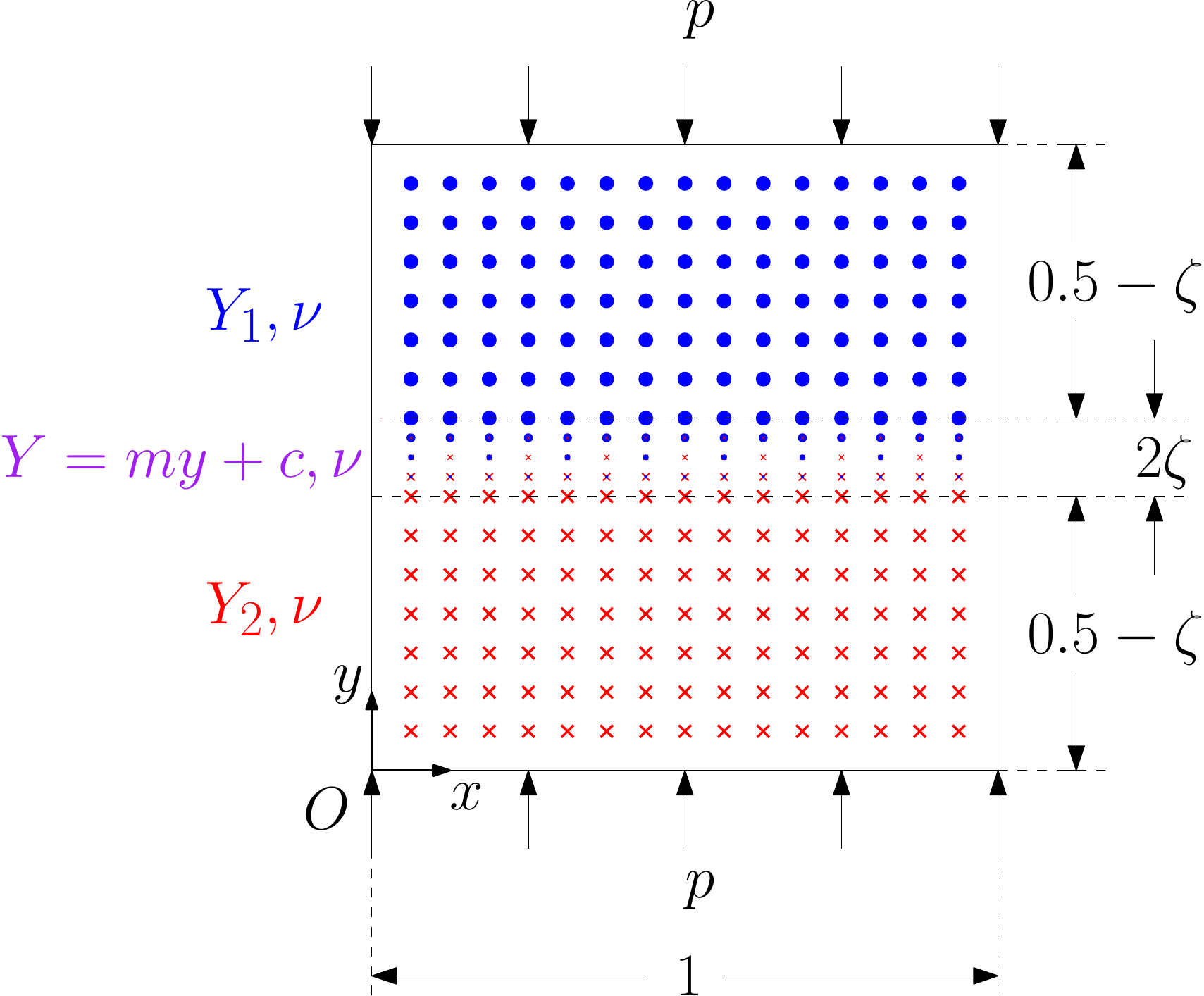}
\end{subfigure}
\caption{Inhomogeneous block subjected to uniform pressure on the top and bottom edges in Example 7. Left: discontinuous Young's modulus; right: piecewise-linear Young's modulus.}
\label{JellySteelSketch}
\end{figure}

A straightforward candidate $\bm{\sigma}_p$ in equilibrium with this loading is:
\begin{equation}
\sigma_{xxp}=0, ~~~ \sigma_{yyp}=-1 ~~~ \text{and} ~~~ \sigma_{xyp}=0 ~~~ \text{in}~~~ \Omega.
\label{sigmap_JS}
\end{equation}
Denoting the domain corresponding to the material with Young's modulus $Y_k$ as $\Omega_k$ ($k=1,2$), the strain $\bm{\varepsilon}_p$ corresponding to $\bm{\sigma}_p$ in $\Omega_k$ is 
\begin{equation*}
\begin{aligned}
\varepsilon_{xxp}&=\frac{\left(1-\nu^2\right)\sigma_{xxp}-\nu(1+\nu)\sigma_{yyp}}{Y_k}=\frac{\nu(1+\nu)}{Y_k},\\
\varepsilon_{yyp}&=\frac{\left(1-\nu^2\right)\sigma_{yyp}-\nu(1+\nu)\sigma_{xxp}}{Y_k}=\frac{-1+\nu^2}{Y_k},\\
\varepsilon_{xyp}&=0.
\end{aligned}
\end{equation*}
We see that $\bm{\varepsilon}_p$ is discontinuous at the interface and, therefore, incompatible. For this reason, the true stress $\bm{\sigma}$ is different from $\bm{\sigma}_p$.

We use the strain energy principle and find a sequence of approximate stresses by substituting the above $\bm{\sigma}_p$ (Eq.~\ref{sigmap_JS}) in Eq.~\ref{hetero}. The true stress (computed using Abaqus) and the approximate stress (with $N=500$) are plotted in Figure \ref{JS}. We observe Gibbs oscillations in the approximate stress (bottom row).

The approximation error $E_N$ is plotted in blue in the left (on a linear scale) and middle (on a log-log scale) panels of Figure \ref{EJS}. In the log-log plot, we find that $E_N$ decays slowly: the slope of the fitted black dotted line is measured to be $-0.22$. Similarly, the convergence of $\mathcal{E}_N$ to $\mathcal{E}$ is slow, as seen in the right panel of Figure \ref{EJS}. The reason for Gibbs oscillations and slow convergence is that $\bm{\sigma}_h$ is discontinuous (since $\bm{\sigma}$ is discontinuous at the interface as seen in the top row of Figure \ref{JS}, while $\bm{\sigma}_p$ is continuous).
\begin{figure}[t!]
	\centering
	\includegraphics [width=1\linewidth]{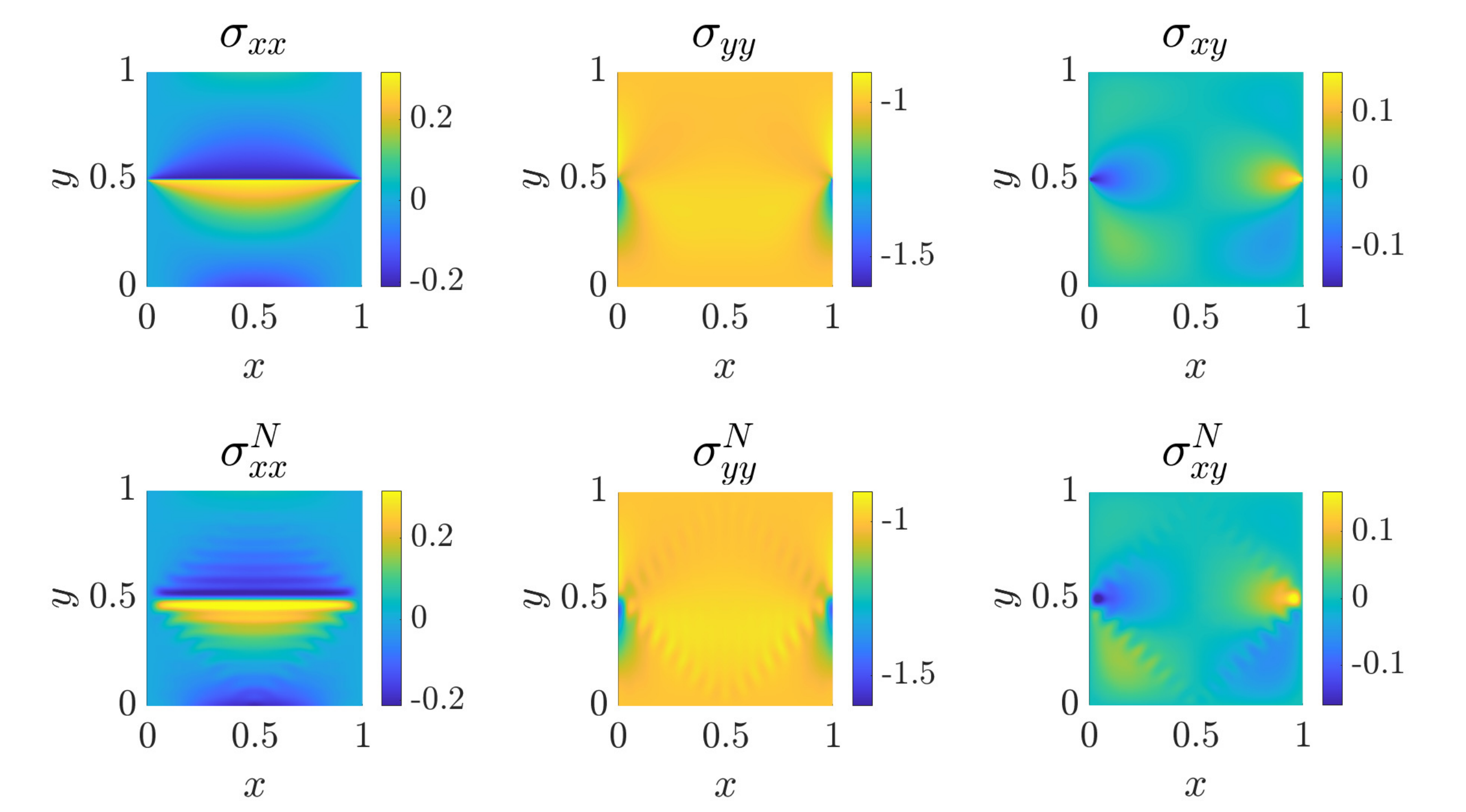}
\caption{True stress (top row) and approximate stress (bottom row) in the block with discontinuous $Y$ in Example 7 (with $N=500$).}
\label{JS}
\end{figure}

\begin{figure}[t!]
    \begin{subfigure}
	\centering
	\includegraphics [width=0.32\linewidth]{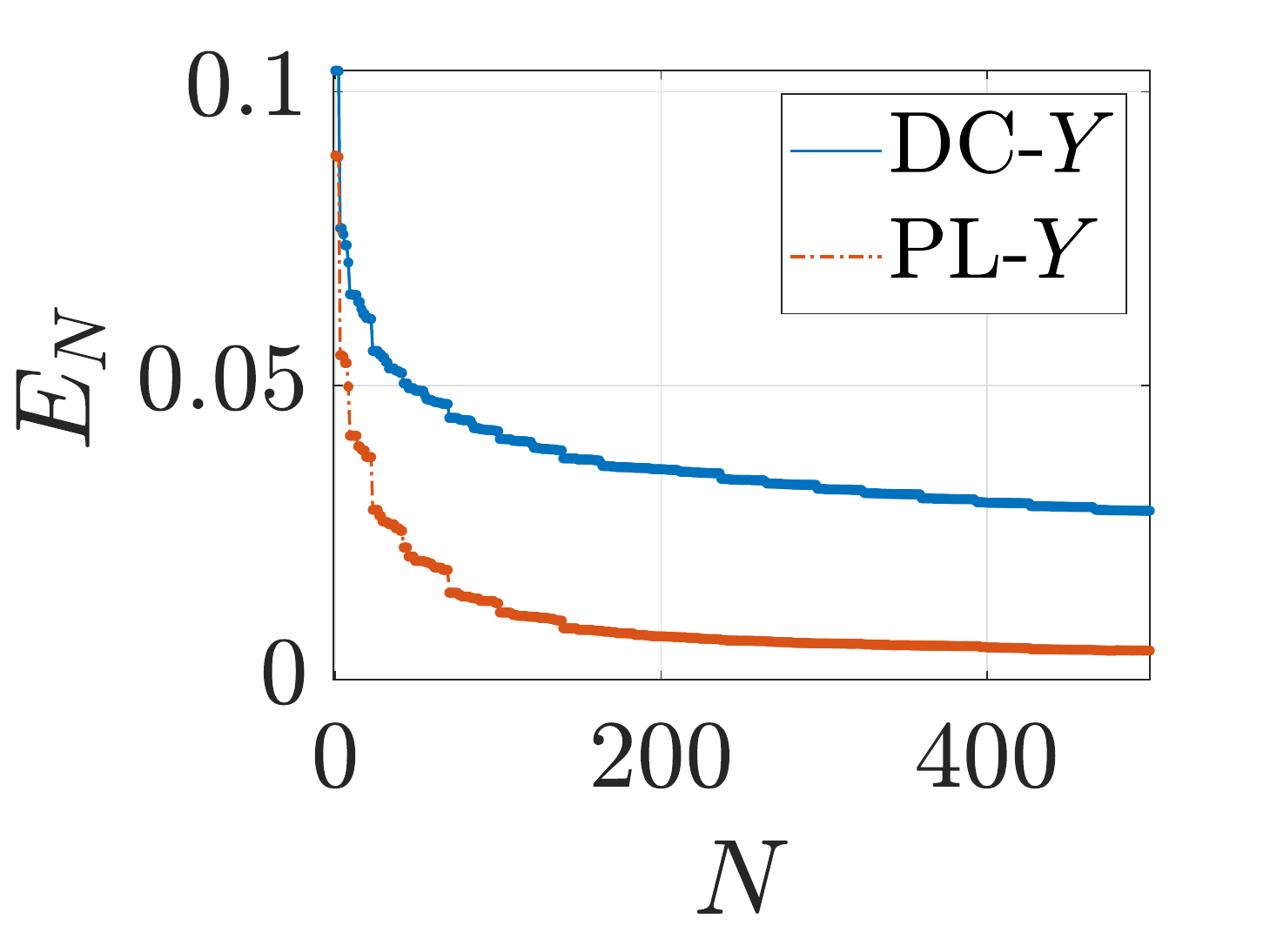}
    \end{subfigure}
    \begin{subfigure}
	\centering
	\includegraphics [width=0.32\linewidth]{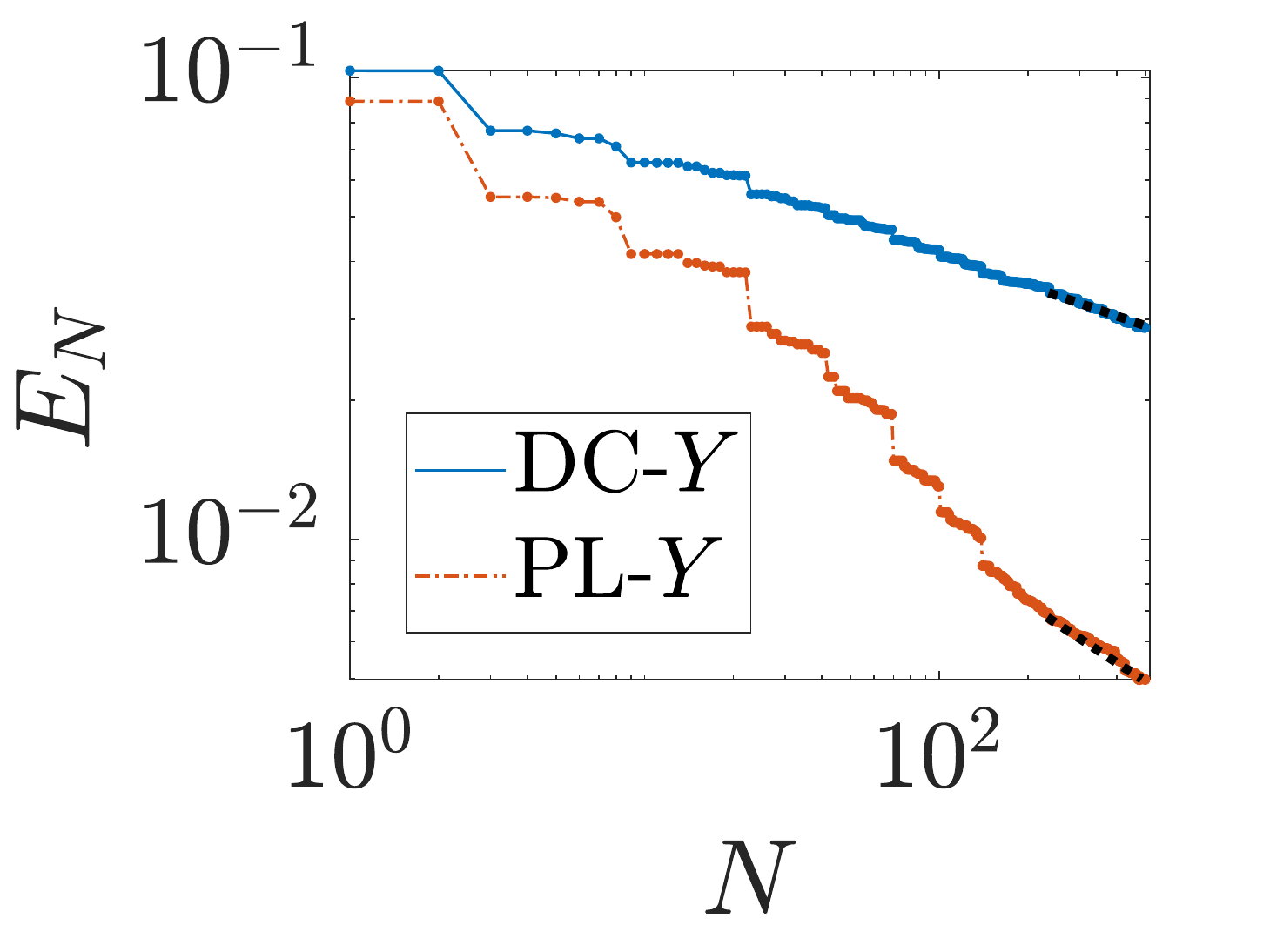}
    \end{subfigure}
    \begin{subfigure}
	\centering
	\includegraphics [width=0.32\linewidth]{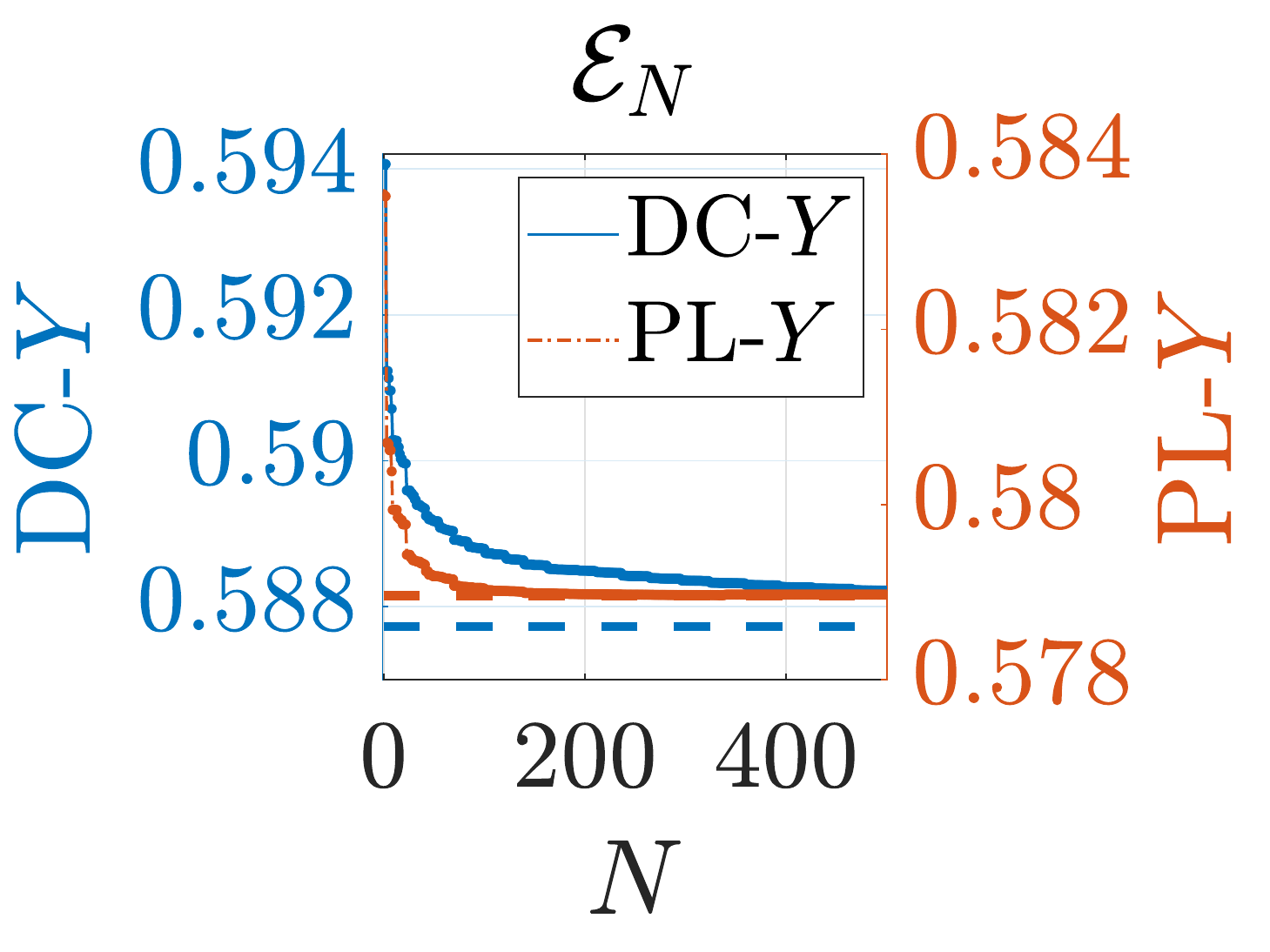}
    \end{subfigure}
\caption{In the figure legends and labels, `DC-$Y$' refers to discontinuous $Y$, and `PW-$Y$' refers to piecewise linear $Y$. Left: Approximation error $E_N$ versus $N$ on linear scale for Example 7; blue: discontinuous $Y$, orange: piecewise-linear $Y$. Middle: $E_N$ on a log-log scale. Right: Strain energy $\mathcal{E}_N$ corresponding to the approximate stresses; the dashed horizontal lines indicate true strain energies.}
\label{EJS}
\end{figure}
\begin{figure}[t!]
	\centering
	\includegraphics [width=1.1\linewidth]{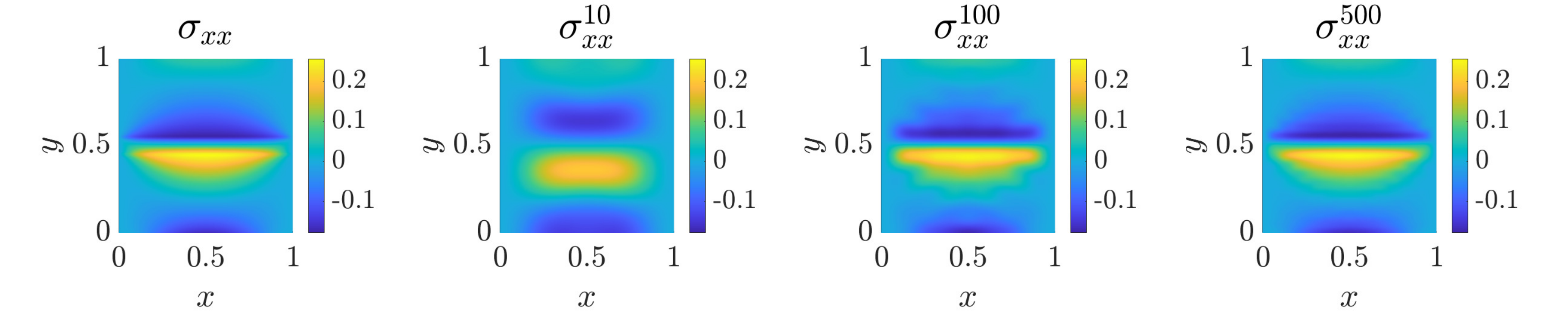}
\caption{True (left-most) and approximate $xx$ components of the stress, with $N=10,100$ and 500, for the block with piecewise-linear $Y$ in Example 7.}
\label{JSNProg}
\end{figure}

We observe faster convergence if smoother transition in material properties is considered. For instance, consider a functionally graded material in which Young's modulus changes from $Y_1$ to $Y_2$ linearly over a distance of $2\zeta$, as shown in the right panel of Figure \ref{JellySteelSketch}. Mathematically,
\begin{equation*}
\displaystyle
Y(y)=
\begin{cases}
\begin{aligned}
\displaystyle
Y_2, ~~~~ &0\leq y \leq 0.5-\zeta,\\
\frac{Y_1-Y_2}{2\zeta}\left( y-\frac{1}{2}\right)+\frac{Y_1+Y_2}{2}, ~~~~ & 0.5-\zeta< y < 0.5+\zeta,\\
Y_1, ~~~~& 0.5+\zeta\leq y \leq 1.
\end{aligned}
\end{cases}
\end{equation*}

In particular, we take $\zeta=0.05$. We plot the approximation error $E_N$ for this case in orange in Figure \ref{EJS}. We find that $E_N$ decays faster on average this time, and the slope of the fitted line in the log-log plot (black dotted line in the middle panel of Figure \ref{EJS}) is measured to be $-0.42$. We also find that $\mathcal{E}_N$, plotted in orange in the right panel of Figure \ref{EJS}, is within 0.1\% of $\mathcal{E}$ with $N=40$ basis functions.

Finally, we see in Figure \ref{JSNProg} where we plot the $xx$ components of $\bm{\sigma}$ and $\bm{\sigma}^N$ that Gibbs oscillations have reduced substantially in comparison to the discontinuous $Y$ case.

\subsubsection{Example 8: Irregularly shaped anisotropic body}
For our final example, we consider an irregularly shaped body made of a homogeneous material obeying orthotropic elasticity. The body has an arbitrarily shaped hole subjected to uniform pressure $p=1$, as shown in Figure \ref{AnisoSketch}.
\begin{figure}[t!]
\centering
\includegraphics[width=0.5\textwidth]{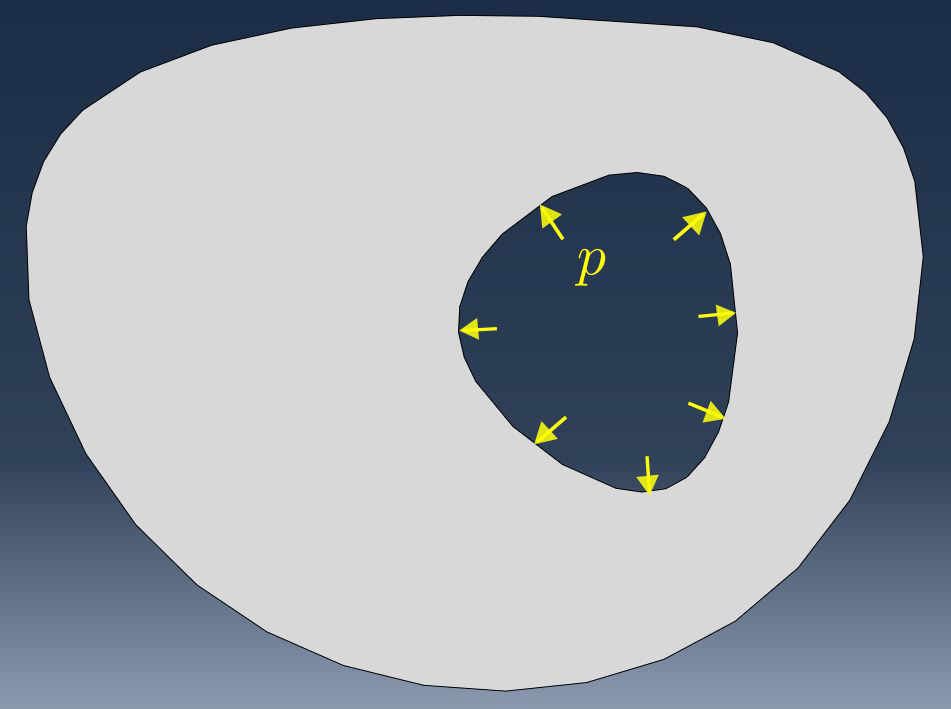}
\caption{An irregularly shaped anisotropic elastic body subjected to uniform internal pressure (Example 8).}
\label{AnisoSketch}
\end{figure}

For orthotropic plane-strain elasticity, the planar components of stress and strain are related as \cite{lekhnitskii}
\begin{equation}
\begin{split}
\epsilon_{xx}=(1-\nu_{xy}^2)\frac{\sigma_{xx}}{Y_x} - \nu_{xy}(1+\nu_{xy})\frac{\sigma_{yy}}{Y_y},\\
\epsilon_{yy}=- \nu_{xy}(1+\nu_{xy})\frac{\sigma_{xx}}{Y_x}+(1-\nu_{xy}^2)\frac{\sigma_{yy}}{Y_y}, \\
\epsilon_{xy}=\frac{\sigma_{xy}}{2G_{xy}},
\end{split}
\label{anisoC}
\end{equation}
where $Y_x$, $Y_y$, $\nu_{xy}$ and $G_{xy}$ are constants. \drop{Accordingly, the integrand in Eq.~\ref{hetero} is
\begin{equation*}
\begin{aligned}
\bm{C}^{-1}\bm{\sigma}_p \cdot \bm{\phi}_i&=\left( (1-\nu_{xy}^2)\frac{\sigma_{xxp}}{Y_x} - \nu_{xy}(1+\nu_{xy})\frac{\sigma_{yyp}}{Y_y}\right)\phi_{xxi}\\
&+\left( - \nu_{xy}(1+\nu_{xy})\frac{\sigma_{xxp}}{Y_x}+(1-\nu_{xy}^2)\frac{\sigma_{yyp}}{Y_y}\right)\phi_{yyi}+\frac{\sigma_{xyp}\phi_{xyi}}{G_{xy}}.
\end{aligned}
\end{equation*}
}
We use $Y_x=1$, $Y_y=2$, $\nu_{xy}=0.33$ and $G_{xy}=1$, and take the particular stress $\bm{\sigma}_p$ to be the stress solution corresponding to an {\em isotropic} elastic body with $Y=1$ and $\nu=0.33$ (obtained using Abaqus). We use the strain energy principle and substitute this $\bm{\sigma}_p$ in Eq.~\ref{hetero} to obtain, for each given $N$, an approximate stress $\bm{\sigma}^N$. 
\begin{figure}[t!] 
	\centering
	\includegraphics [width=1\linewidth]{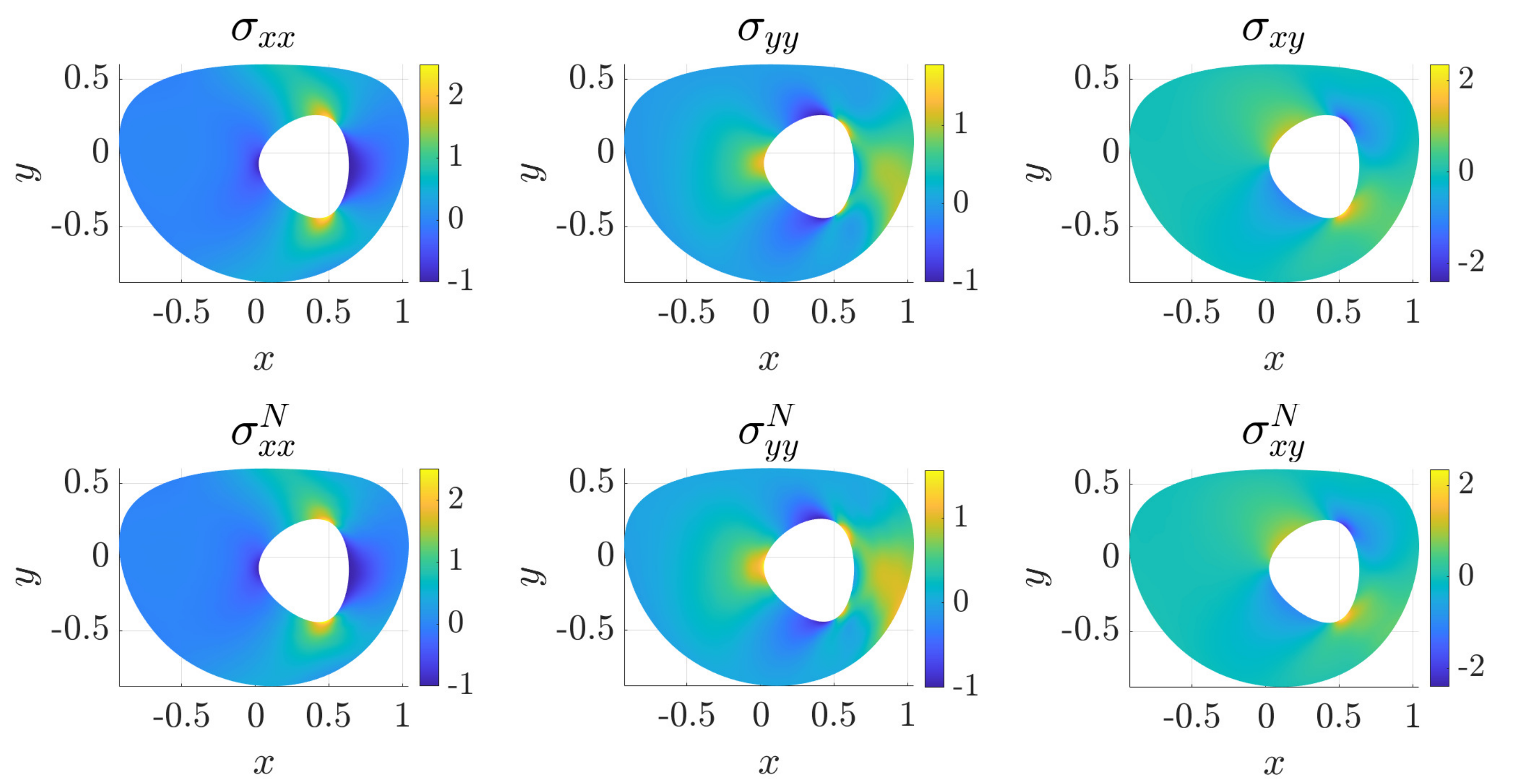}
\caption{True (top row) and approximate (bottom row) stresses in the irregularly shaped anisotropic elastic body in Example 8 (with $N=500$).}
\label{AnisoS}
\end{figure}

We plot the true stress (obtained using Abaqus) and the approximate stress (with $N=500$) in Figure \ref{AnisoS}. The match is good. $E_N$ and $\mathcal{E}_N$ are plotted in Figure \ref{EAniso}. The slope of the black dotted line fitted to $E_N$ in the log-log plot is measured to be $-0.42$.
\begin{figure}[t!]
    \begin{subfigure}
	\centering
	\includegraphics [width=0.5\linewidth]{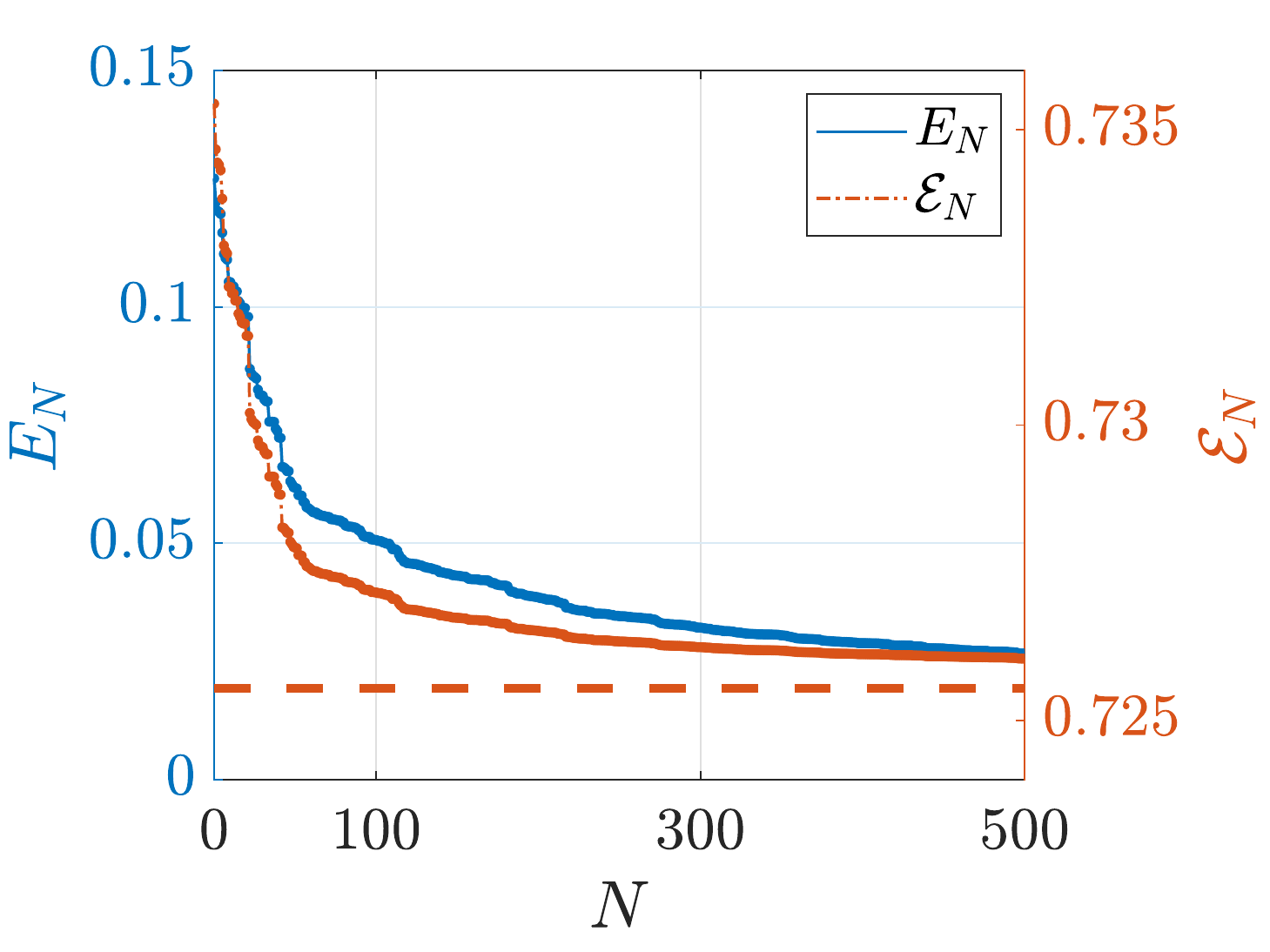}
    \end{subfigure}
    \begin{subfigure}
	\centering
	\includegraphics [width=0.5\linewidth]{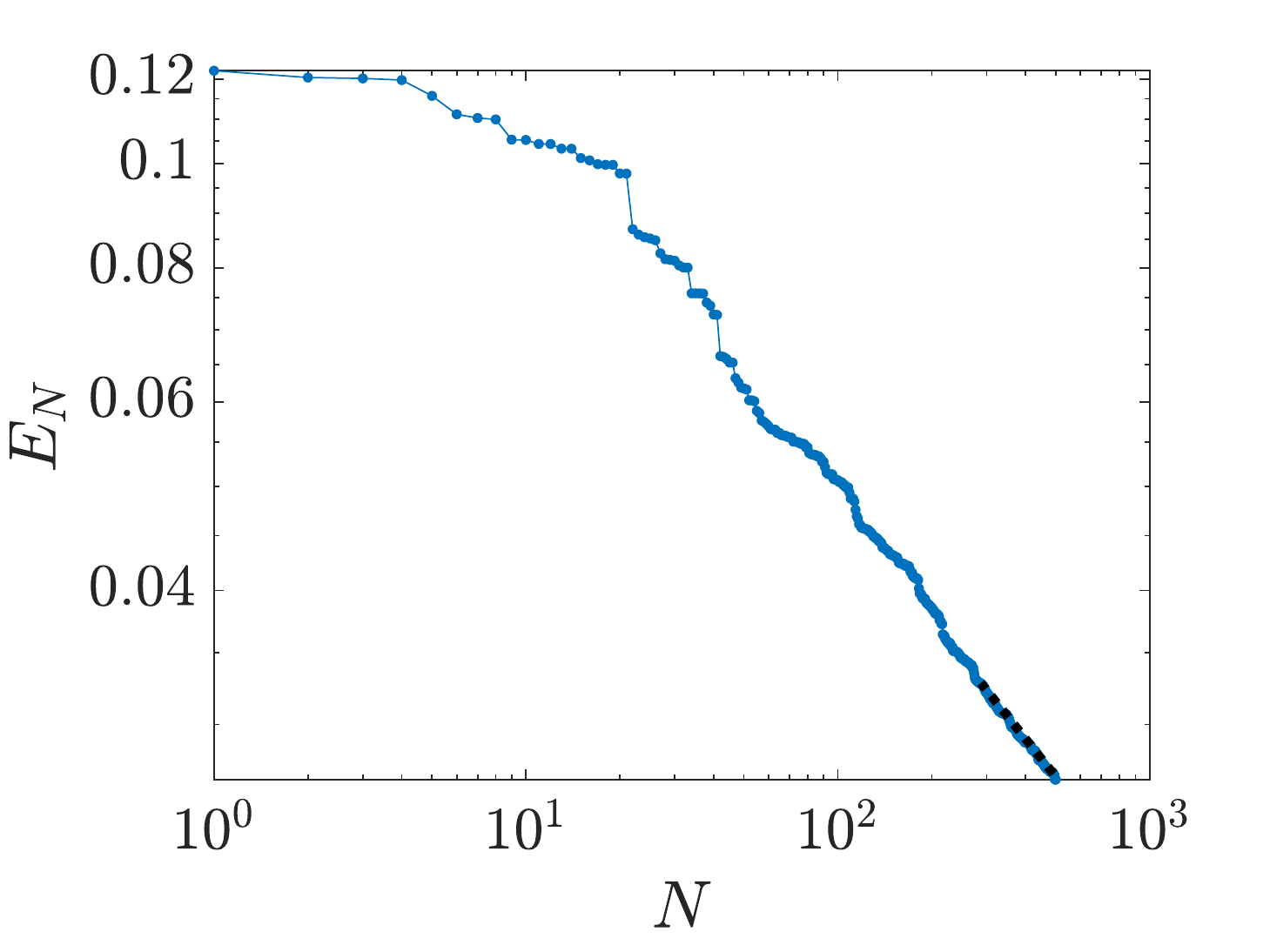}
    \end{subfigure}
\caption{Left: approximation error $E_N$ (in blue) and strain energy corresponding to approximate stress $\mathcal{E}_N$ (in orange) versus $N$ for Example 6; the dashed orange line corresponds to the strain energy of the true stress. Right: $E_N$ on a log-log scale.}
\label{EAniso}
\end{figure}

With this, we conclude the demonstration of our two methods for the solution of planar linear elastic stress problems.

\section{Conclusions} \label{conclusions}
In this paper, we proposed two new methods of solving planar stress problems in linear elasticity. In these methods, the stress to be computed is split into a particular and a homogeneous part, neither of which is required to satisfy strain compatibility. The particular part is {\em any} stress in equilibrium with the loading. The homogeneous part, which is a self-equilibrated traction-free stress, is expanded in basis functions developed in our earlier work \cite{tiwari}. 

In the first method, based on the principle of minimum complementary energy, the coefficients in the expansion are found by minimizing the strain energy over the set of all stresses in equilibrium with the loading. This method is applicable to all linear elastic bodies. The second method is applicable only to the special case of planar homogeneous isotropic bodies subjected to zero net force on each internal hole, if any. In this method, the minimization of the squared $L^2$ norm of the trace of stress (trace energy) gives the true stress. For such bodies, this method yields a computationally cheaper formulation than that corresponding to the first method.

We solved eight examples to demonstrate the working of these methods: six with homogeneous isotropic bodies, and two with other general linear elastic bodies. In these examples, we incorporated sharp corners, multiple-connectedness, non-zero net force and/or non-zero moment on an internal hole, body force, discontinuous surface traction, material inhomogeneity, and anisotropy.

The convergence of the sequence of approximate stresses to the true stress in the strain energy norm was demonstrated numerically in all the examples. It was found that the rate of convergence depends on the regularity of the homogeneous part $\bm{\sigma}_h$; in particular, for discontinuous $\bm{\sigma}_h$, slow convergence was obtained. We also showed that for multiply-connected bodies, our methods do not require explicit imposition of C\'esaro's integral conditions for global strain compatibility, consistent with the comments in \cite{rajagopal}.

In future work, development of a computational method to obtain $\bm{\sigma}_p$ for a general stress problem (with arbitrary geometries and loadings) may be explored. Also, since the basis functions $\bm{\phi}_i$ span all self-equilibrated traction-free stress states without regard to constitutive behaviour, our formulation can in principle be extended to solve any stress-based problem so long as the complementary energy can be expressed solely in terms of stress. Such extensions may be attempted in future. 

However, in our opinion, the importance of this work lies in its two academic contributions, rather than its practical computational utility. Firstly, possibly for the first time, a method that uses tensor-valued stress basis functions to solve solid mechanics problems has been proposed. An expansion with such a basis provides a more natural setting for application of stress-based variational principles, obviating the need to cast the problem in terms of the Airy stress function. Moreover, it allows for a straightforward extension to 3D, unlike an Airy stress function based approach. It also works for cases where an Airy stress function does not exist, viz., multiply-connected bodies with at least one hole subjected to non-zero net force or non-zero net moment. Secondly, we have derived a new variational principle for planar homogeneous isotropic bodies. This new principle is useful for two reasons: firstly, it yields a computationally cheaper formulation than that obtained through the principle of minimum complementary energy; secondly, it shows that in planar homogeneous isotropic bodies, solely the dilatational part of the stress is enough to resolve the issue of strain compatibility.

We hope that this work will lead to new developments in stress-based formulations in solid mechanics. We also hope that the newly discovered planar trace principle will be investigated further and may shed new light on planar elasticity problems.

\appendix

\section{Brief description of the residual stress basis functions \boldmath{$\phi_i$}} \label{abrief}
In this section, we briefly describe the residual stress basis functions $\bm{\phi}_i$ used for expansion of $\bm{\sigma}_h$. 

 We note first that all physically meaningful residual stresses are elements of the closure in $L^2$ norm of the set
\begin{equation}
\mathcal{S}=\biggl\{\bm{\sigma} \left | \bm{\sigma} \in \text{Sym}, \hspace{1mm} \mbox{div} \, \bm{\sigma} = \bm{0}, \hspace{1mm}\left. \bm{\sigma} \bm{n}\right|_{\partial\Omega}=\bm{0}, \hspace{1mm} \int_{\Omega} \bm{\sigma} \cdot \bm{\sigma} \, dA < \infty,\hspace{1mm} \int_{\Omega} \nabla \bm{\sigma} \cdot \nabla \bm{\sigma} \, dA < \infty \right. \biggr\}.
\label{eqdefS}
\end{equation}
We showed in \cite{tiwari} that $\bm{\phi}_i$ are the stationary points of the functional 
\begin{equation}
J_0(\bm{\widetilde{\sigma}})=\frac{1}{2}\int_{\Omega} \nabla \bm{\widetilde{\sigma}} \cdot \nabla \bm{\widetilde{\sigma}} \, dA
\label{extreme}
\end{equation}
on the unit ball in $\mathcal{S}$.
Equivalently, they are the solutions of the eigenvalue problem
\begin{equation}
\begin{array}{cccl}
-\Delta \bm{\sigma} + \nabla_s \bm{\mu}  = \lambda \bm{\sigma} & \text{ and } & \mbox{div} \, \bm{\sigma} = \bm{0} & \text{ in} \hspace{1mm} \Omega,\\
\bm{\sigma n} = \bm{0} & \text{ and } & \nabla_n \bm{\sigma} \cdot (\bm{t} \otimes \bm{t}) = 0 & \text{ on} \hspace{1mm} \partial \Omega,
\end{array}
\label{three_eqns2}
\end{equation}
where $\bm{t}$ is the unit tangent vector at the boundary, $\lambda$ is the constant scalar Lagrange multiplier corresponding to the constraint that the norm be unity, $\bm{\mu}$ is the spatially varying vector Lagrange multiplier corresponding to the pointwise equilibrium constraint $\text{div}\, \bm{\sigma}=\bm{0}$, $\nabla_s \bm{\mu}$ is the symmetric part of $\nabla \bm{\mu}$, and $\nabla_n \bm{\sigma}$ is the normal gradient of $\bm{\sigma}$ at $\partial\Omega$. We arrange the eigenfunctions $\bm{\phi}_i$ in the order of increasing $\lambda_i$.

If $\bm{\phi}_p$ and $\bm{\phi}_q$ are two distinct eigenfunctions, then they satisfy the following orthogonality properties:
\begin{equation}
\int_{\Omega} \bm{\phi}_p \cdot \bm{\phi}_q \, dA =0 ~ ~\mbox{ and } ~ ~ \int_{\Omega} \nabla \bm{\phi}_p \cdot \nabla \bm{\phi}_q \, dA =0.
\label{ort1}
\end{equation}
That is, $\bm{\phi}_i$ are orthogonal with respect to both the $L^2$ and $H^1$ inner products. 

We showed in \cite{tiwari} that $\bm{\phi}_i$ span the closure of $\mathcal{S}$ in the $L^2$ norm. In other words, for any square-integrable residual stress $\tilde{\bm{\sigma}}$, there exists a sequence of constant coefficients ${a_i}$ such that for the stress
$$ \bm{\sigma}_d^N=\tilde{\bm{\sigma}}-\sum_{i=1}^{N} a_i \bm{\phi}_i,$$ 
we have
\begin{equation*}
\displaystyle{\lim_{N \to \infty}} \,\int_{\Omega} \bm{\sigma}_d^N \cdot \bm{\sigma}_d^N \, dA  = 0.
\label{sigmad}
\end{equation*} 

So, 
\begin{equation*}
\tilde{\bm{\sigma}}=\lim_{N \to \infty} \sum_{i=1}^{N} a_i \bm{\phi}_i,
\label{ai}
\end{equation*}
for some sequence $\left(a_i\right)$. Since the closure of $\mathcal{S}$ contains all residual stresses of physical interest, the above equation implies that $\bm{\phi}_i$ can be used to represent any residual stress, without regard to the deformation history that caused it or the material properties of the body.

The eigenvalue problem in Eqs.\ \ref{three_eqns2} can be solved on arbitrary domains using the finite element method (FEM). It can also be solved semi-analytically for special geometries like an annulus and a square. In this paper, we have used $\bm{\phi}_i$ computed using our own FEM routine. The first three $\bm{\phi}_i$ on a rectangle with sides 1 and 1.01, and an annulus with inner and outer radii 0.1 and 0.3, respectively, are shown in Figures \ref{square_modes} and \ref{annular_modes}. Note that we have deliberately chosen slightly different side lengths for the rectangle since, for a square, the second and third modes are degenerate. Consequently, the FEM routine returns arbitrary linear combinations of these two modes, which are not useful for visualization.
\begin{figure}[p]
\centering
\includegraphics [scale=1]{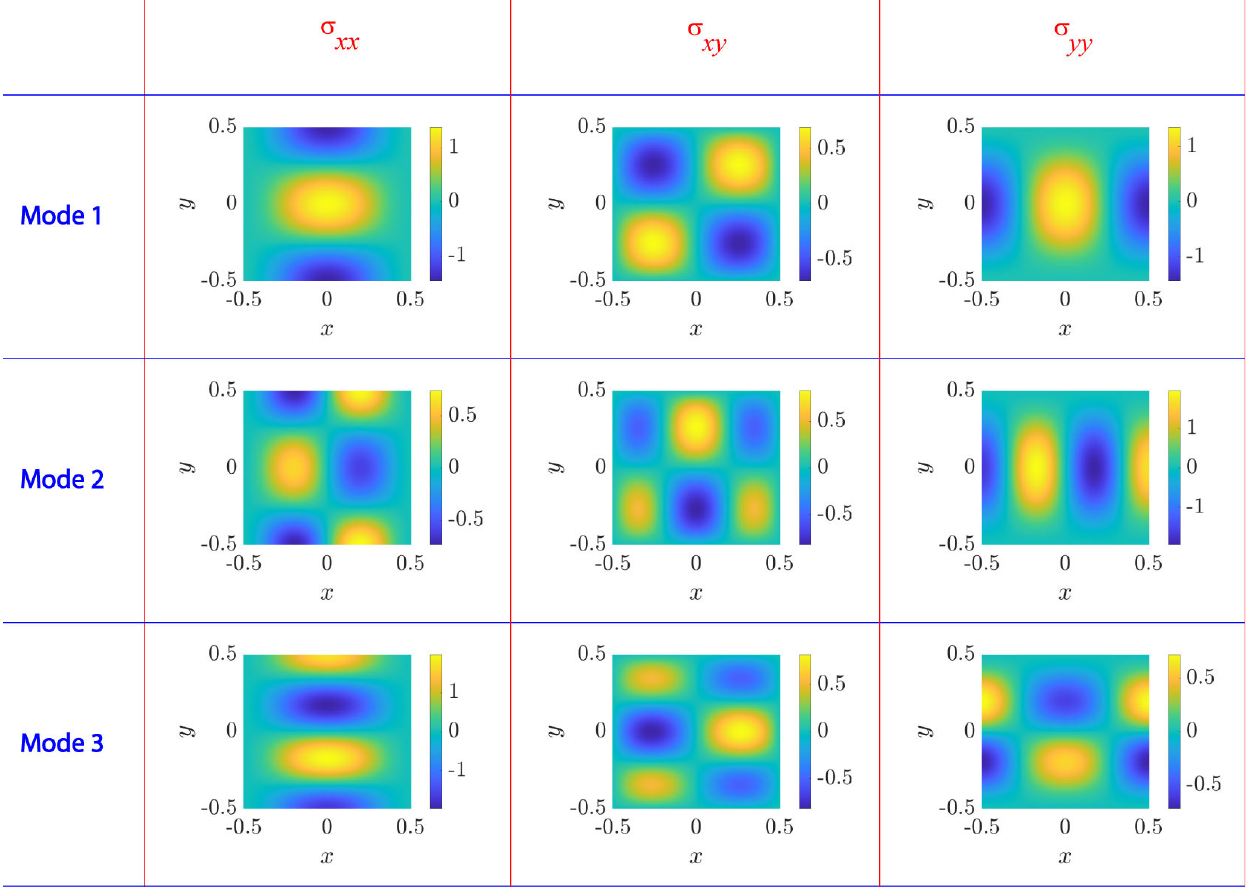}
\caption{First three residual stress basis functions $\bm{\phi}_i$ for a rectangular domain; $\lambda_1=58.54$, $\lambda_2 = 102.37$, $\lambda_3=103.54$. Also in \cite{tiwari}.}
\label{square_modes}
\includegraphics [scale=1]{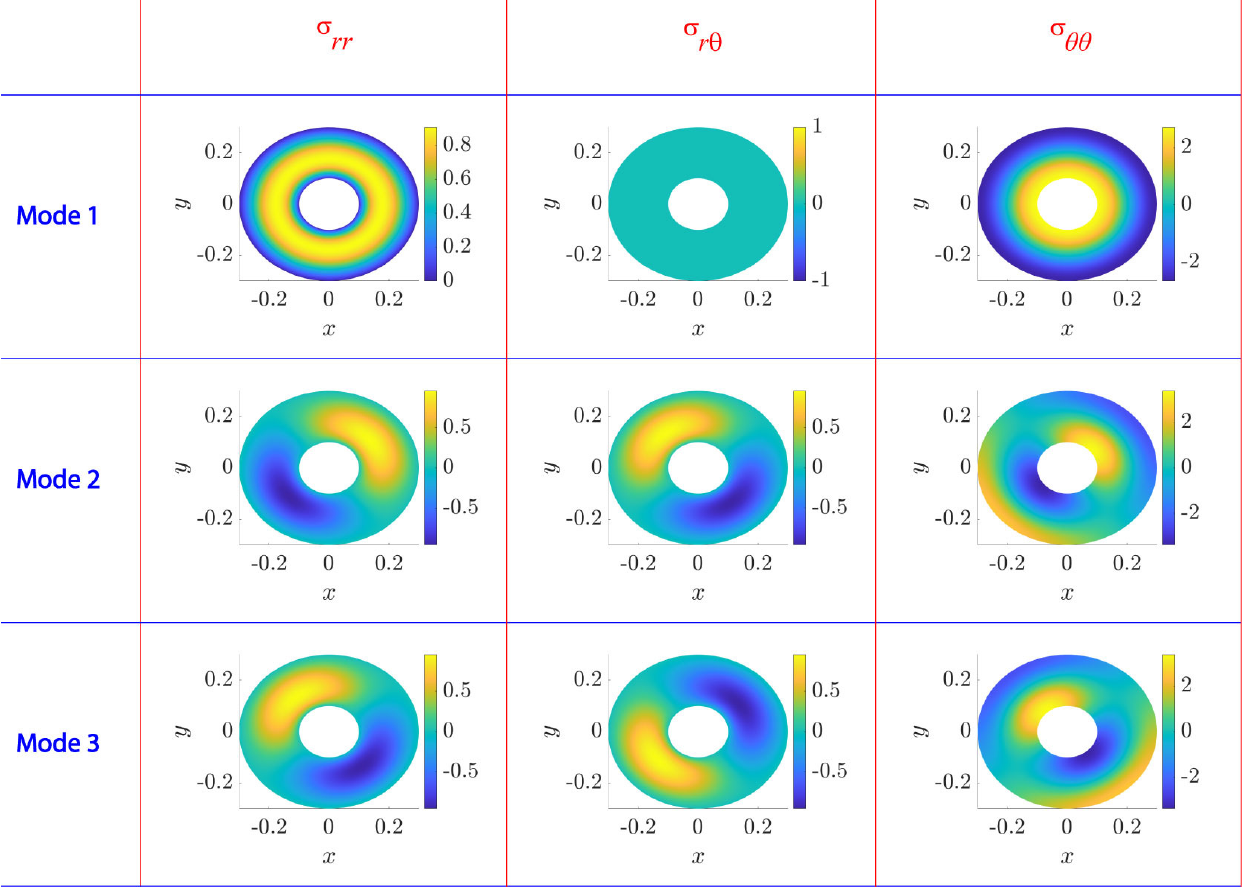}
\caption{First three residual stress basis functions $\bm{\phi}_i$ for an annular domain; $\lambda_1=293.34$, $\lambda_2 = \lambda_3 = 348.76$. Also in \cite{tiwari}.}
\label{annular_modes}
\end{figure}

\section{Proof of Theorem \ref{ipTraces}} \label{ip}
In this section, we prove that the inner product of two planar residual stresses is equal to that of their traces. 

Let the Airy stress functions corresponding to two planar residual stresses $\bm{\sigma}_1$ and $\bm{\sigma}_2$ be $\psi_1$ and $\psi_2$. Since $\bm{\sigma}_1$ and $\bm{\sigma}_2$ are residual stresses, we can assume without loss of generality that \cite{boley}
\begin{equation}
\psi=\psi_{,x}=\psi_{,y}=0 ~~~ \text{on} ~~~ \partial\Omega.
\label{psiBC}
\end{equation} 
Also, we will assume that $\psi_1$ and $\psi_2$ possess the regularity required for all the operations in this proof. 

Consider the quantity $\displaystyle \int_{\Omega} \psi_{1,xy}\, \psi_{2,xy}\, dA$. Using Green's theorem, and boundary conditions on $\psi_1$ and $\psi_2$ (Eq.~\ref{psiBC}), we have
\begin{equation}
\nonumber
\begin{aligned}
\int_{\Omega} \psi_{1,xy}\, \psi_{2,xy}\, dA &= -\int_{\partial\Omega} \psi_{1,xy}\,\psi_{2,x}\, dx - \int_{\Omega} \psi_{1,xyy}\, \psi_{2,x}\, dA\\
& = -\int_{\partial\Omega} \psi_{1,xyy}\,\psi_2\, dy + \int_{\Omega} \psi_{1,xxyy}\,\psi_2\, dA
=\int_{\Omega} \psi_{1,xxyy}\,\psi_2\, dA.
\end{aligned}
\end{equation}
Similarly,
\begin{equation}
\nonumber
\begin{split}
\int_{\Omega} \psi_{1,yy}\, \psi_{2,xx}\, dA = \int_{\partial\Omega} \psi_{1,yy}\,\psi_{2,x}\, dy - \int_{\Omega} \psi_{1,yyx}\, \psi_{2,x}\, dA \\
= -\int_{\partial\Omega} \psi_{1,yyx}\,\psi_2\, dy + \int_{\Omega} \psi_{1,yyxx}\,\psi_2\, dA
=\int_{\Omega} \psi_{1,yyxx}\,\psi_2\, dA=\int_{\Omega} \psi_{1,xxyy}\,\psi_2\, dA.
\end{split}
\end{equation}
From the above two equations, we have that
\begin{equation}
\int_{\Omega} \psi_{1,xy}\, \psi_{2,xy}\, dA = \int_{\Omega} \psi_{1,yy}\, \psi_{2,xx}\, dA.
\label{firstCond}
\end{equation}
We can similarly show that 
\begin{equation}
\int_{\Omega} \psi_{1,xy}\, \psi_{2,xy}\, dA = \int_{\Omega} \psi_{1,xx}\, \psi_{2,yy}\, dA.
\label{secondCond}
\end{equation}

Let us now consider the inner product of $\bm{\sigma}_1$ and $\bm{\sigma}_2$. Using the above two identities, we have
\begin{equation}
\nonumber
\begin{aligned}
\int_{\Omega}\bm{\sigma}_1 \cdot \bm{\sigma}_2 \, dA &= \int_{\Omega} \left( \sigma_{1xx}\sigma_{2xx}+\sigma_{1yy}\sigma_{2yy}+2\sigma_{1xy}\sigma_{2xy}\right)\, dA \\
&= \int_{\Omega} \left( \psi_{1,yy}\psi_{2,yy}+\psi_{1,xx}\psi_{2,xx}+2\psi_{1,xy}\psi_{2,xy}\right)\, dA \\
&= \int_{\Omega} \left( \psi_{1,yy}\psi_{2,yy} + \psi_{1,xx}\psi_{2,xx} + \psi_{1,xx}\psi_{2,yy}+ \psi_{1,yy}\psi_{2,xx}\right)\,dA\\
&=\int_{\Omega}\left( \psi_{1,xx}+\psi_{1,yy}\right)\left( \psi_{2,xx}+\psi_{2,yy}\right)\,dA = \int_{\Omega} \bar{\sigma}_1\bar{\sigma}_2 \, dA.
\end{aligned}
\end{equation}
Thus, we conclude that the planar traces $\bar{\phi}_i$ of the orthonormal basis functions $\bm{\phi}_i$ are also orthonormal. 

\section{Modification of the planar trace principle to include body forces} \label{bf}
In this section, we show that for a planar homogeneous isotropic body obeying linear elasticity subjected to a body force $\bm{b}=-\nabla V$, the minimizer of the functional
\begin{equation}
\label{L2eb}
\mathcal{T}_b(\bm{\sigma})=\int_{\Omega} \left(\bar{\sigma}-\frac{1}{1-\nu}V\right)^2 \, dA
\end{equation}
over the set
\begin{equation*}
\mathcal{Q}_b=\left\{\bm{\sigma} \big| ~ \bm{\sigma}\in \,\text{Sym},~\int_{\Omega} \bm{\sigma}\cdot \bm{\sigma}\,dA<\infty,~\text{div}\,\bm{\sigma}+\bm{b}=\bm{0}, ~ \left.\bm{\sigma n}\right|_{\partial \Omega}=\bm{\tau} \right\}
\end{equation*}
is the true stress. 

Let us first consider simply-connected bodies. We begin by seeking the stationary points of $\mathcal{T}_b$ over $\mathcal{Q}_b$. To that end, we incorporate the constraint $\text{div}\,\bm{\sigma}+\bm{b}=\bm{0}$ through a Lagrange multiplier vector $\bm{\mu}$, and extremize
\begin{equation}
\tilde{\mathcal{T}}_b(\bm{\sigma},\bm{\mu})=\int_{\Omega} \left\{\left(\bar{\sigma}-\frac{1}{1-\nu}V\right)^2- \bm{\mu}\cdot \left(\text{div}\,\bm{\sigma} + \bm{b}\right)\right\}\, dA
\label{tildTb}
\end{equation}
over the set
\begin{equation}
\tilde{\mathcal{Q}}_b=\left\{\bm{\sigma} \big| ~ \bm{\sigma}\in \,\text{Sym},~ \int_{\Omega} \bm{\sigma}\cdot \bm{\sigma}\,dA<\infty,~\left.\bm{\sigma n}\right|_{\partial \Omega}=\bm{\tau} \right\}.
\label{chand}
\end{equation}
Let $(\hat{\bm{\sigma}},\hat{\bm{\mu}})$ be a stationary point and $\bm{\eta}$ be an infinitesimal perturbation such that $\hat{\bm{\sigma}}+\bm{\eta}\in \tilde{\mathcal{Q}}_b$. Then,
$$ \tilde{\mathcal{T}}_b(\bm{\sigma}+\bm{\eta},\bm{\mu}) = \tilde{\mathcal{T}}_b(\bm{\sigma},\bm{\mu})$$
up to first-order terms in $\bm{\eta}$, where we have dropped the hats. Expanding the above using Eq.~\ref{tildTb}, and neglecting the second-order terms in $\bm{\eta}$, we obtain
$$ \int_{\Omega} \left(2\bar{\sigma} \bar{\eta} - \frac{2V\bar{\eta}}{1-\nu}- \bm{\mu}\cdot \text{div}\,\bm{\eta} \right)\, dA=0. $$
Proceeding using routine calculus of variations procedure, we obtain the Euler-Lagrange equation
\begin{equation}
\left\{\bar{\sigma}-\frac{V}{1-\nu}\right\}\bm{I} =- \nabla_s \bm{\mu}/2.
\label{sivab}
\end{equation}
Following the procedure used in Section \ref{HomIso} to eliminate $\bm{\mu}$, we finally obtain 
\begin{equation}
\Delta \left(\bar{\sigma}-\frac{V}{1-\nu}\right)=\Delta \bar{\sigma}+\frac{\text{div}\, \bm{b}}{1-\nu}=0.
\label{traceComp2b}
\end{equation}
Equation \ref{traceComp2b} is a necessary and sufficient condition for compatibility of strain in planar simply-connected bodies \cite{timoshenko}. 

The variation of the Lagrange multiplier $\bm{\mu}$ gives 
$$ \text{div}\,\bm{\sigma}+\bm{b}=\bm{0}.$$
Also, since $\bm{\sigma}\in \tilde{\mathcal{Q}}_b$ (Eq.~\ref{chand}), $\bm{\sigma n}=\bm{\tau}$. So, $\bm{\sigma}$ is in equilibrium with $\bm{\tau}$.

Thus, the stationary point $\bm{\sigma}$ is in equilibrium with $\bm{\tau}$ and satisfies strain compatibility. Therefore, it must be the sought stress solution. Positive definiteness of $\mathcal{T}_b$ implies
that $\bm{\sigma}$ {\em minimizes} $\mathcal{T}_b$ over $\mathcal{Q}_b$.

It can be shown along identical lines to the proof presented in Theorem \ref{PTP} that for multiply-connected bodies, the minimizer of $\mathcal{T}_b$ over $\mathcal{Q}_b$ satisfies global strain compatibility too, provided the net force on each hole is individually zero. Thus, the above variational principle holds for multiply-connected bodies also.

To find an approximate stress $\bm{\sigma}^N$, we write
\begin{equation}
\label{expanseb}
\bm{\sigma}^N=\bm{\sigma}_p + \sum_{j=1}^N a_j \bm{\phi}_j,
\end{equation}
where $\bm{\phi}_j$ are self-equilibrated traction-free basis functions \cite{tiwari}, and $\bm{\sigma}_p$ is any stress satisfying
\begin{equation}
\begin{aligned}
\text{div}\, \bm{\sigma}_p + \bm{b}&= \bm{0} ~~~ \text{in} ~~~ \Omega, \\
\bm{\sigma}_p \bm{n} &= \bm{\tau} ~~~ \text{on} ~~~ \partial\Omega.
\end{aligned}
\label{BVPsigmapb}
\end{equation}
Once a candidate $\bm{\sigma}_p$ is obtained we substitute it in Eq. \ref{expanseb}, which in turn is substituted in the functional $\mathcal{T}_b$ of Eq.~\ref{L2eb} to obtain
\begin{equation}
\label{thatb}
\hat{\mathcal{T}}_b(a_1,a_2,\cdots,a_N)=\displaystyle \int_{\Omega} \left(\bar{\sigma}_p+\sum_{j=1}^{N} a_j \bar{\phi}_j-\frac{V}{1-\nu}\right)^2\, dA. 
\end{equation}
The gradient of $\hat{\mathcal{T}}_b$ with respect to $a_1$ to $a_N$ is set to zero, and orthogonality properties of $\bar{\phi}_j$ are used to finally obtain
\begin{equation*}
a_i = -\int_{\Omega} \left( \bar{\sigma}_p-\frac{V}{1-\nu}\right) \bar{\phi}_i \, dA, ~~~~~~ 1\leq i \leq N.
\end{equation*}
Accordingly, the approximate stress 
\begin{equation*}
\bm{\sigma}^N=\bm{\sigma}_p + \sum_{j=1}^N\left( -\int_{\Omega} \left( \bar{\sigma}_p-\frac{V}{1-\nu}\right) \bar{\phi}_j \, dA\right)\bm{\phi}_j.
\end{equation*}

\bibliographystyle{vancouver} 

\end{document}